\definecolor{mygray}{gray}{0.92}
\newcommand{\ctbl}{\cellcolor{mygray}}
\newcolumntype{C}[1]{>{\centering\arraybackslash$}p{#1}<{$}}
\theoremstyle{plain}
\newtheorem{theorem}{Theorem}[section]
\newtheorem{proposition}[theorem]{Proposition}
\newtheorem{lemma}[theorem]{Lemma}
\theoremstyle{definition}
\newtheorem{definition}[theorem]{Definition}
\theoremstyle{remark}
\newtheorem{remark}[theorem]{Remark}
\newtheorem{example}[theorem]{Example}
\numberwithin{equation}{section}
\def\defi{\textsf}
\def\eps{\varepsilon}
\def\epsilon{\varepsilon}
\def\theta{\vartheta}
\def\phi{\varphi}
\def\tilde{\widetilde}
\newcommand{\trv}[3]{( #1,#2 )_{#3}}
\def\LiE{\textsc{LiE}}
\def\Magma{\textsc{Magma}}
\def\cf{\textit{cf.}}
\def\ie{\textit{i.e.}}
\def\infra{\textit{infra}}
\DeclareMathOperator{\disc}{disc}
\DeclareMathOperator{\DO}{DO}
\DeclareMathOperator{\GL}{GL}
\DeclareMathOperator{\GO}{GO}
\DeclareMathOperator{\Hom}{Hom}
\DeclareMathOperator{\im}{im}
\DeclareMathOperator{\intt}{int}
\DeclareMathOperator{\JI}{JI}
\DeclareMathOperator{\PSL}{PSL}
\DeclareMathOperator{\SL}{SL}
\DeclareMathOperator{\SO}{SO}
\DeclareMathOperator{\Spec}{Spec}
\DeclareMathOperator{\Sym}{Sym}
\def\A{\mathbb{A}}
\def\C{\mathbb{C}}
\def\F{\mathbb{F}}
\def\N{\mathbb{N}}
\def\P{\mathbb{P}\,}
\def\Q{\mathbb{Q}}
\def\R{\mathbb{R}}
\def\Z{\mathbb{Z}}
\def\el{\mathfrak{e}}
\def\fl{\mathfrak{f}}
\def\hl{\mathfrak{h}}
\def\sll{\mathfrak{sl}}
\def\gammab{\bm{\gamma}}
\def\Deltab{\bm{\Delta}}
\def\etab{\bm{\eta}}
\def\rhob{\bm{\rho}}
\def\sigmab{\bm{\sigma}}
\def\psib{\bm{\psi}}
\def\chib{\bm{\chi}}
\def\Cb{\bm{C}}
\def\Fb{\bm{F}}
\def\Hb{\bm{H}}
\def\Ib{\bm{I}}
\def\Jb{\bm{J}}
\def\Nb{\bm{N}}
\def\Pb{\bm{P}}
\def\Qb{\bm{Q}}
\def\Tb{\bm{T}}
\def\Xb{\bm{X}}
\def\bb{\bm{b}}
\def\fb{\bm{f}}
\def\gb{\bm{g}}
\def\hb{\bm{h}}
\def\jb{\bm{j}}
\def\ub{\bm{u}}
\def\Xs{\mathsf{X}}
\def\Ys{\mathsf{Y}}
\def\Zs{\mathsf{Z}}
\def\av{\underline{a}}
\def\bv{\underline{b}}
\def\bbv{\underline{\bm{b}}}
\def\Iv{\underline{I}}
\def\vv{\underline{v}}
\def\xv{\underline{x}}
\begin{document}

\title{Reconstructing plane quartics from their invariants}
\date{\today}

\begin{abstract}
  We present an explicit method that, given a generic tuple of Dixmier--Ohno
  invariants, reconstructs a corresponding plane quartic curve.
\end{abstract}

\author{Reynald Lercier}
\address{%
  \textsc{DGA MI}, %
  La Roche Marguerite, %
  35174 Bruz, %
  France. %
}
\address{%
  IRMAR, %
  Universit\'e de Rennes 1, %
  Campus de Beaulieu, %
  35042 Rennes, %
  France. %
}
\email{reynald.lercier@m4x.org}

\author{Christophe Ritzenthaler}
\address{%
  IRMAR, %
  Universit\'e de Rennes 1, %
  Campus de Beaulieu, %
  35042 Rennes, %
  France. %
}
\email{christophe.ritzenthaler@univ-rennes1.fr}

\author{Jeroen Sijsling}
\address{
  Department of Mathematics,
  Dartmouth College,
  6188 Kemeny Hall,
  Hanover, NH 03755
  USA
}
\address{
  Institut f\"ur Reine Mathematik,
  Universit\"at Ulm,
  Helmholtzstrasse 18,
  89081 Ulm,
  Germany
}
\email{sijsling@gmail.com}

\thanks{The first two authors acknowledge support from the CysMoLog ``d\'efi
  scientifique \'emergent'' of the Universit\'e de Rennes 1.}

\subjclass[2010]{13A50, 14L24, 14H10, 14H25}
\keywords{plane quartic curves; invariant theory; Dixmier--Ohno invariants;
  moduli spaces; reconstruction}

\maketitle

\section*{Introduction}

Invariant theory played a central role in nineteenth-century algebra and
geometry, and the natural action of linear groups on spaces of homogenous
polynomials in several variables, or \defi{forms}, was one of its principal
areas of focus. Nowadays, motivated by computational applications to
cryptography, robotics, coding theory, and experimental mathematics, this
theory has come to a renaissance. It is a source of many questions with an
explicit or computational orientation. One of these questions, the
reconstruction of ternary quartics (\ie, forms of degree $4$ in $3$ variables)
from their invariants, is the central theme of our paper.

Of old, the group $\SL_2 (\C)$ and its action on the ring $R_{2,n}$ of binary
forms $b (z_1, z_2)$ of degree $n \geq 2$ over $\C$ have received the lion's
share of attention. One reason is the remarkable formalism developed by Gordan
in 1868~\cite{gordan68} and by Hilbert in 1897~\cite{hilbert-inv} to compute a
finite set of generators of the ring of invariants $\C [R_{2,n}]^{\SL_2 (\C)}$
of $R_{2,n}$. The implementations of their approaches have so far led to the
determination of a set of generators (called \defi{fundamental invariants}) of
these invariant rings for $n \leq 10$; here we refer
to~\cite{brouwer2,brouwer1,gordan68,gall80,vongall}.

This algebraic problem has a geometric counterpart. Given a binary form $b$ of
even degree $n$ with simple roots, we can consider the degree $2$ cover $\Xs$
of $\P^1_{\C}$ defined by the equation $\Xs : y^2 = b (z_1, z_2)$ in the
weighted projective space with coordinates $z_1, z_2, y$ and weights $1, 1, n$.
Since isomorphisms between hyperelliptic curves are induced by the action of
$\GL_2(\C)$, the values of fundamental invariants of $R_{2,n}$ on the form $b$
define a point in a certain weighted projective space which characterizes the
geometric isomorphism class of $\Xs$. For $n = 4$ an affine coordinate on this
space is given by the classical Weierstrass $j$-invariant; for larger even $n$
this construction gives rise to an explicit embedding of the moduli space of
hyperelliptic curves of genus $n / 2 - 1$ into a weighted projective space.

When we broaden our scope to consider the action of $\SL_3(\C)$ on the ring
$R_{3,n}$ of ternary homogenous polynomials $F$ of degree $n$, explicit sets of
generators are known only for $n \leq 4$. While the cases $n = 2, 3$ were known
classically, the case $n = 4$ was completely settled only recently, by
Dixmier~\cite{dixmier} and Ohno~\cite{ohno} (but see also~\cite{elsenhans,
  giko}). The work of these authors shows that the ring $\C [R_{3,4}]^{\SL_3
  (\C)}$ is generated by $13$ elements, usually called the \defi{Dixmier--Ohno
  invariants} of ternary quartics. We discuss these results in
Section~\ref{sec:dixmier}.

The cases $n = 2, 3$ correspond to curves of genus $0$ and $1$, which can at
least equally conveniently be seen as degree $2$ covers of the projective line.
Upon passing to the case $n = 4$, this changes drastically. The associated
plane quartic curve $\Xs : F (x_1, x_2, x_3) = 0$ is now a
\emph{non}-hyperelliptic curve of genus $3$. Two plane quartics that are smooth
(as is generically the case) are isomorphic if and only if the corresponding
ternary quartic forms are equivalent under the action of $\GL_3 (\C)$. \medskip

In this algebraic-geometric setup
\begin{eqnarray*}
  \{ \textrm{binary/ternary forms}\}_{/\GL}
  & \longleftrightarrow
  & \{\textrm{classical invariants}\} \\
  \left\{ \textrm{curves of genus} \; g \right\}_{/\simeq}
  & \longleftrightarrow
  & \left\{\textrm{space of coordinates}\right\}
\end{eqnarray*}
we have so far only indicated how to go from left to right. However, there are
many areas where one is also interested in the reconstruction of a curve from
given invariants. Here one can think of the construction of CM
curves~\cite{cohen-handbook} but also of finding curves with many
points~\cite{rokaeus-search} or of experiments in arithmetic
statistics~\cite{LRRS}. For binary forms of degrees $n \in \left\{ 4,6,8
\right\}$, that is to say, for elliptic curves and for hyperelliptic curves of
genus $\le 3$, this reconstruction is indeed possible over any algebraically
closed field of characteristic $p$ for $n=4$ and $6$ and for fields of
characteristic $0$ or $p > 7$ if $n=8$. The cases $2 \leq p \leq 7$ were
analyzed in~\cite{basson}; reconstruction turns out to be possible for $p \in
\left\{ 3, 7 \right\}$, and for $p = 2$ an orbit-separating set of invariants
can be constructed.

The main tool in this context is a method due to Mestre~\cite{mestre} (see the
introduction of~\cite{LR11} for more details). It is based on formulas that go
back to Clebsch~\cite[\S~103]{clebsch} and uses a generalization of invariants
called \defi{covariants} (see Definition~\ref{def:variant}). Roughly speaking,
starting from three covariants of order $2$, one constructs a conic $\Qb$ and a
plane curve $\Hb$ of degree $n / 2$. The coefficients of the curves $\Qb$ and
$\Hb$ are invariants, and can in particular be expressed in terms of the
fundamental invariants mentioned above. This means that when given values of
the fundamental invariants, we can construct the corresponding specializations
$Q$ and $H$ of $\Qb$ and $\Hb$. If the resulting specialization $Q$ is
non-singular, then we can build the degree $2$ cover of $Q$ that ramifies at
the $n$ points where $Q$ and $H$ intersect. This cover will then have the
requested fundamental invariants. Note that the singularity of the conic $\Qb$
is in turn described by the vanishing of a generically non-zero invariant.
Therefore this method furnishes us with a way to reconstruct generic
hyperelliptic curves of genus at most $3$ from their invariants. When using
enough distinct covariants, this reconstruction in fact becomes possible for
any hyperelliptic curve of genus at most $3$, \cf~\cite{LR11}.

For ternary forms, the reconstruction problem is more complicated. Of course in
low degree the obstacles are not very extensive yet. Certainly the case of
ternary quadrics is manageable enough, as all of these are geometrically
isomorphic to $\P^1$. The case of ternary cubics can be dealt with by
considering these genus $1$ curves as an elliptic curve and reconstructing this
curve from its Weierstrass $j$-invariant (see~\cite[4.5, p.173]{sturmfels} and
\cite[Sec.10.3]{dolgainv}). The case of ternary quartics, however, has been
open for a long time. This is in part because the formulas by Clebsch that are
used in Mestre's method remain mysterious, in the sense that no general
encompassing formalism for their derivation has been found yet. Another
complicating factor in the ternary quartic case is the high dimension of the
underlying geometry of the space of ternary quartics. Of course,
Katsylo~\cite{katsylo-bi} showed that the moduli space of plane quartics is
rational. As such, by the triviality of the descent obstruction for generic
plane quartics there should exist a quartic curve over the purely
transcendental field $\C (\P \Sym^4 (V^*))^{\SL_3 (\C)}$ whose coefficients are
functions in the Dixmier--Ohno invariants. However, the generators of this
field are not known explicitly, and if the binary case~\cite{maeda90} is a fair
indication, the corresponding invariants are likely to be rather unmanageable,
which leaves even less hope for writing down a such a generic quartic curve.

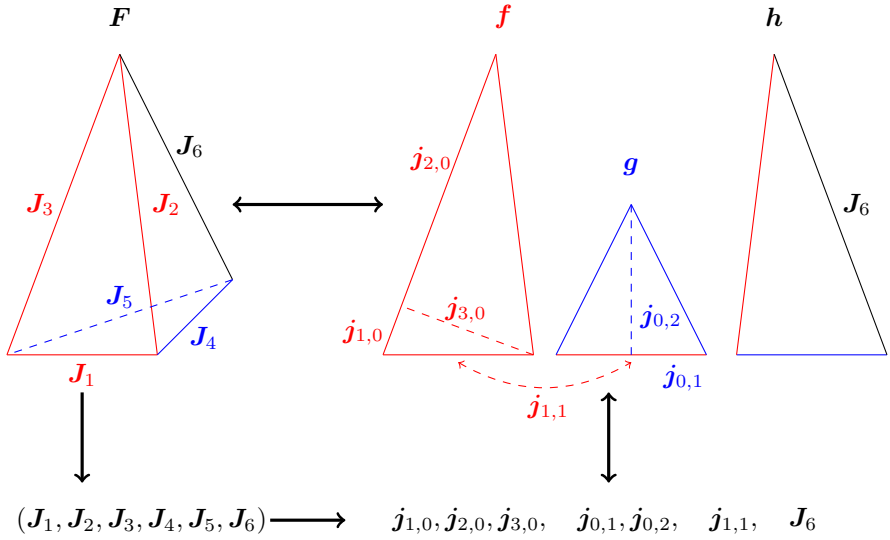
\begin{figure}[htbp]\centering
  \begin{center}
    \begin{tikzpicture}
      \draw [color=red] (0,3)-- (2,3);
      \draw [color=red] (1,3) node[below] {$\Jb_1$};
      \draw [color=red] (2,3)-- (1.5,7);
      \draw [color=red] (2,5) node {\ \ $\Jb_2$};
      \draw  [color=red] (1.5,7)-- (0,3);
      \draw [color=red] (0.5,5) node {$\Jb_3$\ \ };
      \draw (1.5,7) -- (3,4);
      \draw (2,5.5) node[above right] {\ $\Jb_6$};
      \draw [color=blue] (3,4) -- (2,3);
      \draw [color=blue] (2.5,3.5) node[below] {$\ \ \Jb_4$};
      \draw  [color=blue] [dashed] (3,4)--(0,3);
      \draw  [color=blue] (1.5,3.5) node[above] {$\Jb_5$};
      \draw (1.5,7.5) node {$\Fb$};

      \draw [very thick] [<->] (3,5) -- (5,5);
      \draw [very thick] [->] (1,2.5) -- (1,1.3);

      \draw [color= red] (5,3)-- (7,3);
      \draw [color= red] (5.7,5.6) node {$\jb_{2,0}$\ \ };
      \draw [color= red] (4.8,3.3) node {$\jb_{1,0}$\ \ };
      \draw  [color= red] (7,3)-- (6.5,7);
      \draw  [color= red] (6.5,7)-- (5,3);
      \draw [color= red] [dashed] (7,3) -- (5.24,3.65);
      \draw [color= red] (6.5,3.6) node[left] {$\jb_{3,0}$};
      \draw [color= red] (6.6,7.5) node {$\fb$};

      \draw [color=red, dashed] [<->] (6,2.9) to [bend right] (8.3,2.9);
      \draw [color= red] (7.2,2.3) node {$\jb_{1,1}$};

      \draw [color= red] (7.3,3)-- (9.3,3);
      \draw [color= blue] (9,2.7) node {$\jb_{0,1}$};
      \draw [color= blue] [dashed] (8.3,5) -- (8.3,3);
      \draw [color= blue] (8.3,3.5) node[right] {$\jb_{0,2}$};
      \draw [color=blue] (9.3,3)-- (8.3,5);
      \draw [color=blue] (8.3,5)-- (7.3,3);
      \draw [color= blue] (8.3,5.5) node {$\gb$};

      \draw  [color=blue] (9.7,3) -- (11.7,3);
      \draw [color=red] (9.7,3) -- (10.2,7);
      \draw (10.2,7) -- (11.7,3);
      \draw (11.3,5) node {$\Jb_6$};
      \draw (10.2,7.5) node {$\hb$};
      \draw [very thick] [<->] (8,2.5) -- (8,1.3);

      \draw (0,0.5) node[above right] {$(\Jb_1,\Jb_2,\Jb_3,\Jb_4,\Jb_5,\Jb_6)$};
      \draw [very thick] [->] (3.5,.8) -- (4.5,.8);
      \draw (5,0.5) node[above right] {$\jb_{1,0},\jb_{2,0},\jb_{3,0}, \quad \jb_{0,1},\jb_{0,2}, \quad \jb_{1,1}, \quad \Jb_6$};

    \end{tikzpicture}

  \end{center}
  \caption{Geometric analogue of the reconstruction strategy}
  \label{fig:analogy}
\end{figure}

Our strategy therefore takes a detour and involves several steps. We illustrate
them in Figure~\ref{fig:analogy}.  This picture is to be interpreted as
follows. On the left hand side, we consider a ``universal'' tetrahedron $\Fb$;
this universality is reflected in our notation with a boldface, as for the
``universal'' curves $\Qb$ and $\Hb$ considered in Mestre's method above. This
tetrahedron $\Fb$ is the analogue of the ternary quartic forms in which we are
interested. The lengths of the sides of $\Fb$ correspond to the invariants of
this form. Note the analogy: the side lengths of a tetrahedron are indeed
invariant under isometry. We let $\Jb_i$ be a set of side lengths from which
our tetrahedron $\Fb$ can be reconstructed; these are the analogue of the
Dixmier--Ohno invariants.

We reconstruct $\Fb$ from the $\Jb_i$ by an indirect route by taking a
counterclockwise detour through a composition of three other arrows, which are
defined in the following way. The top arrow is the extraction of three of the
faces $\fb, \gb, \hb$ of $\Fb$ and conversely, we assume that we have an
explicit  way to reconstruct $\Fb$ from these faces. Moreover, these faces are
in turn characterized by certain lengths $\jb_{0,b}$ and $\jb_{a,0}$, and these
lengths additionally satisfy some mutual relations. The face length $\jb_{1,1}$
is the analogue of what we will later call a \defi{joint invariant}. It is
easier to reconstruct the faces $\fb, \gb, \hb$ in a $2$-dimensional space (a
process indicated by the rightmost arrow), so that we see that we can indeed
hope to reconstruct $\Fb$ as long as we know how to obtain the joint invariants
$\jb_{a,b}$ from the invariants $\Jb_i$. This link between the invariants is
represented by the bottom arrow.

In the context of ternary quartic forms, the ``faces'' $\fb, \gb$ and $\hb$
mentioned above are the analogues of three binary forms $\bb_8, \bb_4$ and
$\bb_0$ of respective degree $8,4$ and $0$ (so that $\bb_0$ is a constant
polynomial in the invariants). The group $\SL_2 (\C)$ has a simultaneous action
on these forms. Olive~\cite{olive} has recently determined a corresponding set
of fundamental invariants $\jb_{d_1,d_2}$. Among these joint invariants are a
set of fundamental invariants $\jb_{d_1,0}$ of the octic form $\bb_8$, similar
fundamental invariants $\jb_{0,d_2}$ of the quartic $\bb_4$, and the form
$\bb_0$ itself, but the invariant algebra contains many more ``cross-terms''
beyond these.

The correspondence between $\Fb$ and the triple $\bbv = (\bb_8, \bb_4, \bb_0)$
is induced by a classical isomorphism between $\SL_2 (\C) / \{\pm 1\}$ and the
special orthogonal group $\SO (q)$ of a fixed (co- or contravariant) ternary
quadratic form $q$. Using Lie theory, one constructs an isomorphism between $\C
[R_{3,4}]^{\SO (q)}$ and the ring of invariants $\C [R_{2,8} \oplus R_{2,4}
\oplus R_{2,0}]^{\SL_2(\C)}$ for the diagonal action of $\SL_2 (\C)$. This idea
was exploited by Katsylo~\cite{katsylo-bi} and used in a more explicit way by
Van Rijnswou~\cite{VanRijnswou}. We refer to Section~\ref{sec:vr} for details
on this part of our argument.

We are interested in invariants under the whole group $\SL_3 (\C)$, not merely
those of $\SO (q)$. Fortunately, it is possible to reduce the study of the
former invariants to that of the latter by using the notion of $(G,
H)$-sections (see Definition~\ref{def:section}). Roughly speaking, one can show
that a generic quartic $F$ is $\SL_3 (\C)$-equivalent to one in the set $\Zs$
of quartics whose quadratic contravariant $\rhob (F)$ (as defined
in~\eqref{eq:rho}) is a non-zero multiple $u \cdot q$ of the chosen standard
quadratic form $q$. Moreover, up to scalar multiplication two generic quartics
in $\Zs$ are equivalent under the action of $\SL_3 (\C)$ if and only if they
are equivalent under the action of $\SO_3 (q)$.

As was known to Katsylo (but see also the first part of
Proposition~\ref{prop:section}) this means that the function field $\C
(R_{3,4})^{\SL_3 (\C)}$ is isomorphic to $\C (\Zs)^{\SO_3 (q)}$. We can then
use the correspondence between $F$ and the triple $\bv = (b_8, b_4, b_0)$.
After this, we show that one can actually control the denominator of this
expression, as well as the degree of the numerator. This is achieved by
combining a regularity statement (see the second part of
Proposition~\ref{prop:section}) with a detailed study of a transformation
matrix $T$ that transforms a generic quadric to one whose covariant is of the
form $u \cdot q$ (see Section~\ref{sec:normalization}) and a fundamental
relation between $\bb_0$, the Dixmier--Ohno invariants $\Ib_9, \Ib_{12}$ and
the determinant of $T$ (see Lemma~\ref{lem:I9b0}). 

Our final theoretic description of the numerator and denominator is given in
Theorem~\ref{thm:main}, for which we present two different proofs. Using
interpolation then allows us to recover explicit expressions for the joint
invariants in terms of the Dixmier--Ohno invariants, as is described further in
Section~\ref{sec:interpolation}.

The final step is to actually perform the reconstruction of a triple $(b_8,
b_4, b_0)$ once values of the joint invariants are given. We first show that if
$F$ is a \defi{stable} quartic (which is in particular the case for quartics
with non-zero discriminant) and $\Ib_{12} (F) \ne 0$, then the corresponding
triple $(b_8, b_4, b_0)$ is in the stable locus of the action of $\SL_2(\C)$
and therefore its orbit is uniquely determined by its joint invariants (see
Theorem~\ref{thm:rec_locus}). Reconstruction is therefore theoretically
possible under these hypotheses; it remains to show how to do this explicitly
in the generic situation.

Our reconstruction algorithm in Section~\ref{sec:rec_algs} starts from given
Dixmier--Ohno invariants defined over field $k$ of characteristic $0$. In order
to avoid working over a cubic extension of the field of definition of the
invariants, we normalize the invariant $b_0$ so it becomes equal to the
Dixmier--Ohno invariant $I_9$ and then we use the explicit relations of
Section~\ref{sec:interpolation} to get the corresponding joint invariants
$j_{d_1,d_2}$. We then use the methods from~\cite{LR11} for reconstructing a
stable binary octic $b_8$ from given Shioda invariants, which can be derived
from the $j_{i,0}$ (see Remark~\ref{rem:shioda}). Note that this step, which
uses the Mestre's method, will in general require a quadratic extension of the
base field $k$. One can then use the values $j_{d,1}$ of the joint invariants
$\jb_{d,1}$ which are linear in the coefficients of $b_4$, to determine the
coefficients of $b_4$ by solving a linear system. Finally, one  transforms back
the triple $(b_8,b_4,b_0)$ by a linear isomorphism $\ell^*$ (given
in~\eqref{eq:lstar_matrix}) to find a ternary quartic $F$ which Dixmier--Ohno
invariants are projectively equal to the given ones as a point in a weighted
projective space. We briefly indicate in Section~\ref{sec:descent} how one can
perform a Galois descent if one wishes to find an $F$ over $k$, which in fact
is generically possible. Finally, a form $F$ over $k$ whose Dixmier--Ohno
invariants are projectively equivalent to the given ones can generically also
be scaled over $k$ to obtain an exact equality of invariants.

The implementation of our results that was used when writing this article can
be found at~\cite{LRS16-Code}.
\medskip

A number of open problems remain. A first of these is to remove our genericity
assumptions; one would certainly like to be able to deal with the quartics in
the locus $\Ib_{12} = 0$ as well, and more generally, to be able to perform the
reconstruction for any tuple of Dixmier--Ohno invariants corresponding to a
stable quartic. Another open problem is the extension of our theory to fields
of positive characteristic, for which it is not even clear when the
Dixmier--Ohno invariants are still a fundamental set.

\subsection*{Acknowledgments}

We would like to thank the participants of the working group TEDI, and in
particular Boris Kolev and Marc Olive, for their interest and for the many
useful discussions.

\subsection*{Notation and conventions}

While we recall most conventions at the beginning of the relevant sections,
here are some general conventions to which we adhere in this article.

In what follows, $K$ is an algebraically closed base field, supposed to be of
characteristic $0$. Forms over $K$ are denoted with Roman letters, which are
capitalized when dealing with ternary forms. The universal forms of forms, as
well as those of their covariants, are denoted in boldface Roman letters, so
that we for example see individual ternary forms $F$ as incarnations of the
universal ternary quartic form $\Fb$. Conversely, when specializing at a given
form over $K$, this boldface is removed, so that the value of a covariant $\Cb$
at a form $F$ over $K$ is denoted by $C$. Contravariants and their values are
denoted by Greek letters, so that a contravariant $\gammab$ has a
specialization $\gamma = \gammab (F)$ at a form $F$ over $K$.

For the sake of clarity, we denote group actions with an intermediate dot, so
that we write $g . s$ where one would often merely find $g s$.

\section{Invariant theory}

\subsection{Formalism}
\label{sec:formalism}

In this section we describe the actions of linear groups that we will need in
the rest of the article, as well as the formalism needed to deal with
invariants, covariants, and contravariants. This exposition is adapted to our
needs and therefore not in any sense complete; other general discussions of the
theory of invariants and covariants can be found in~\cite[\S~1]{kraft-procesi}
and~\cite[Lecture~5]{dolgainv}.

Given a finite-dimensional vector space $V$ over an algebraically closed field
$K$, let $V^* = \Hom (V, K)$ be its dual. Since $K$ is infinite, homogeneous
polynomial functions on $V$ can be naturally identify with forms, which are
defined as follows.

\begin{definition}
  A \defi{form} over $K$ is an element $F$ of a vector space $\Sym^n (V^*)$,
  where $V$ is a finite-dimensional vector space over $K$.  Given a form $F$,
  its \defi{arity} (or more colloquially its \defi{number of variables}) is
  the dimension $m$ of $V$. The \defi{degree} of $F$ is denoted by $n$.
\end{definition}

Given $V$, we also define the \defi{polynomial ring} over $V$ to be the
graded ring
\begin{equation}
  K [V] = \bigoplus_{n \ge 0} \Sym^n (V^*) ,
\end{equation}
and we define the \defi{affine space} $\A_V$ over $V$ to be the variety $\Spec
K [V]$. In particular, the functor that to a vector space associates the
corresponding affine space is covariant.

In this article, we will consider group actions on several vector spaces and
duals. We will occasionally want to switch actions from left to right, and as
such, given a left (resp.\ right) action of a group $G$ on a vector space $V$,
we convert it into a right (resp.\ left) action by setting
\begin{equation}\label{eq:action_switch}
  v . T := T^{-1} . v \quad (\text{resp.} \; T . v := v . T^{-1})
\end{equation}
for $T \in G$ and $v \in V$.

A subgroup $G$ of $\GL (V)$ has a natural left action on $V$. This induces a
natural right action on $V^*$; for $x \in V^*$ and $T \in G$ we have $x . T = x
\circ T$. These actions induce further natural actions on $\Sym^n (V)$ and
$\Sym^n (V^*)$.

Choosing a basis $\vv = \left( v_1, \dots , v_m \right)$ of $V$, we get a dual
basis $\xv = \left( x_1 , \dots , x_m \right) = \left(v_1^*, \dots , v_m^*
\right)$ of $V^*$. Given $n$ and a list $I = (i_1, \dots, i_m)$ of $m$
non-negative integers the sum of which is $n$, we denote $x^I = x_1^{i_1}
\cdots x_m^{i_m} \in \Sym^n (V^*)$. The elements $x^I$ form a basis of
$\Sym^n(V^*)$, and we denote the elements of the corresponding dual basis
$\av$ by $a_I \in \Sym^n (V)$. For a form $F$, we have $(x^I)^* (F) = a_I
(F)$, so that we can write $F = \sum a_I (F) x^I$ and identify the values $a_I
(F) \in K$ with the coefficients of $F$ in this expression.

A choice of basis for $V$ also allows us to identify an element $T$ of $\GL
(V)$ with a matrix $[ T ] = \left( t_{i,j} \right)_{i,j = 1}^{m}$. With this
notation, the result $T . v_i$ of having $T$ act on the left on the $i$th
vector $v_i$ of the basis $\vv$ of $V$ corresponds to the $i$th column of the
matrix $[T]$. By duality, the result $x_i . T$ of having $T$ act on the right
on the $i$th vector $x_i$ of the dual basis $\xv$ of $V^*$ corresponds to the
$i$th row of the matrix $[T]$, or alternatively to the $i$th column of the
transpose of $[T]$. In a formula, we have
\begin{equation}\label{eq:action_subst}
  \begin{split}
    T.v_i &= t_{1,i} v_1 + \dots + t_{m,i} v_m
    \\ \textrm{(resp.} \; x_i.T &=  t_{i,1} x_1 + \dots + t_{i,m} x_m ) .
  \end{split}
\end{equation}
We will abbreviate
\begin{equation}
  T . \vv = (T . v_1, \dots, T . v_m)
  \quad (\text{resp.} \;
  \xv . T = (x_1 . T, \dots, x_m . T)\,) .
\end{equation}

\begin{remark}
  The substitution $\xv. T$ corresponding to the right action of $T$ on $\xv$
  (and that also induces the right action of $T$ on the symmetric powers
  $\Sym^n (V^*)$) is in fact encoded in the formal matrix product
  \begin{equation}\label{eq:action_subst_alt}
    \begin{pmatrix}
      t_{1,1} & \hdots & t_{1,n} \\
      \vdots & \ddots & \vdots \\
      t_{n,1} & \hdots & t_{n,n}
    \end{pmatrix}
    \times
    \begin{pmatrix}
      x_1 \\
      \vdots \\
      x_n
    \end{pmatrix}
    .
  \end{equation}
  Note how in~\eqref{eq:action_subst_alt} the \emph{right} action on the dual
  is obtained by using variables instead of values for the usual \emph{left}
  action of $[T]$ on column vectors; this is a general principle when
  dualizing. The substitution $T. \vv$ corresponding to the left action of $T$
  on $V$ can be obtained by multiplying the formal row vector corresponding to
  $\vv$ on the right by $T$ instead.
\end{remark}

The individual forms $F \in \Sym^n (V^*)$ can be seen as the incarnations of a
universal form, which we will denote by $\Fb$. In coordinates, this is the form
\begin{equation}\label{eq:Funiv}
  \Fb := \Fb (\av, \xv) := \Fb (\left( a_I \right), \left( x^I \right)) =
  \sum_{I : \sum I = n} a_I x^I \in \Sym^n (V) \otimes \Sym^n (V^*) .
\end{equation}
The duality  $\langle x^I, T. a_J \rangle = \langle T^* (x^I), a_J \rangle =
\langle x^I . T, a_J \rangle$ implies  the fundamental compatibility
\begin{equation}\label{eq:F_compat}
  \Fb (T . \av, \xv) := \Fb (\left(T . a_I \right), \left( x^I \right)) =
  \Fb (\left( a_I \right), \left( x^I . T\right)) =: \Fb (\av , \xv . T) .
\end{equation}
This also follows because $\Fb$ corresponds to the canonical bilinear
contraction (or evaluation) $\Sym^n (V^*) \times \Sym^n (V) \to K$. A form $F$
over $K$ can be obtained from the universal form $\Fb$ by contracting it with
the morphism $\Sym^n (V) \to K$ that corresponds to $F \in \Sym^n (V^*)$.

Using the coordinate vectors $\av$ and $\xv$ (resp.\ $\vv$) in what follows
will allow us to concretely represent covariant (resp.\ contravariant) tensors
as bihomogeneous polynomials of degree $d$ in the $a_I$ and degree $r$ in
$x^I$ (resp.\ $v^I$).

The following notion is a fundamental tool in the study of the representations
$\Sym^n (V^*)$ of $\GL (V)$ and its subgroups.

\begin{definition}\label{def:variant}
  A \defi{covariant} (resp.\ \defi{contravariant}) of $\Sym^n (V^*)$ is an $\SL
  (V)$-equivariant homogeneous polynomial map
  \begin{equation}\label{eq:variant}
    \begin{split}
      & \Cb : \Sym^n (V^*) \to \Sym^r (V^*) \\
      ( \text{resp.} \;
      & \gammab : \Sym^n (V^*) \to \Sym^r (V) ) .
    \end{split}
  \end{equation}
  The \defi{order} of $\Cb$ (resp.\ $\gammab$) is defined to be $r$, whereas
  the \defi{degree} of $\Cb$ (resp.\ $\gammab$) is its degree (in $\av$) as a
  homogeneous polynomial map.

  An \defi{invariant} is a covariant (or, for that matter, a contravariant) of
  order $0$.
\end{definition}

\begin{remark}
  In a sense, we could have decided only to work with covariants, since after
  agreeing that $\Sym^{r} (V^*) = \Sym^{-r} (V)$ for $r$ negative, a
  contravariant becomes nothing but a covariant of negative order. This
  description would lead to a fully unified description of co- and
  contravariants and a more concise formalism. We have however rather chosen to
  follow the classical description, in which co- and contravariants are
  distinguished.

  At any rate, we have already restricted ourselves to a special case. In
  general (see~\cite[\S~1.4]{kraft-procesi} and
  especially~\cite[\S~5.2]{dolgainv}), given a representation $V$ of a group
  $G$, a \defi{$W$-covariant} of $V$ is a $G$-equivariant homogeneous
  polynomial map $V \to W$, where $W$ is another representation of $G$. As was
  classically the custom, we have restricted ourselves to covariants of the
  same arity as the original form. Note that in order to study joint covariants
  (which is in fact needed to determine the joint invariants mentioned at the
  end of this section) we would have to use the more general notion just
  mentioned.
\end{remark}

We proceed to unwind our definitions. Consider a co- or contravariant as in
\eqref{eq:variant}, of order $r$ and degree $d$.
\begin{enumerate}
\item We can consider the homogeneous polynomial map~\eqref{eq:variant} as an
  equivariant linear map $\Sym^d (\Sym^n (V^*)) \to \Sym^r (V^*)$ (resp.\
  $\Sym^d (\Sym^n (V^*)) \to \Sym^r (V)$), which in turn is nothing but an
  invariant tensor
  \begin{equation}\label{eq:variant_symsym}
    \begin{split}
      & \Cb \in (\Sym^d (\Sym^n (V)) \otimes \Sym^r (V^*))^{\SL (V)} \\
      ( \text{resp.} \;
      & \gammab \in (\Sym^d (\Sym^n (V)) \otimes \Sym^r (V))^{\SL (V)} ) .
    \end{split}
  \end{equation}
\item In this form, the demand that $\Cb$ (resp.\ $\gammab$) be invariant
  translates into the analogue of~\eqref{eq:F_compat} (but only for
  transformations by $\SL (V)$; see~\eqref{eq:weight_co}
  and~\eqref{eq:weight_contra} below for the transformation behavior under
  $\GL (V)$). Indeed, for $T \in \SL (V)$ the invariance of $\Cb = \Cb (\av,
  \xv)$ under the diagonal action under $T$ translates to
  \begin{equation}
    \begin{split}
      & \Cb (\av, \xv) . T
      = \Cb (\av . T, \xv . T)
      = \Cb (T^{-1} . \av, \xv . T) \\
      ( \text{resp.} \;
      & \gammab (\av, \xv) . T
      = \gammab (\av . T, \xv . T)
      = \gammab (T^{-1} . \av, \xv . T) ) . \\
    \end{split}
  \end{equation}
  Applying $T$ on the factor $\Sym^n (V)$ then leads to
  \begin{equation}\label{eq:variant_compat}
    \begin{split}
      & \Cb (T . \av, \xv) = \Cb (\av , \xv . T) \\
      ( \text{resp.} \;
      &  \gammab (T . \av , \vv) =\gammab (\av, \vv . T)  ) .
    \end{split}
  \end{equation}
\item Putting all degrees and orders together, we can also consider $\Cb$
  (resp. $\gammab$) as a homogeneous element of bidegree $(d, r)$ of the
  algebra
  \begin{equation}
    \begin{split}
      & (K [ \Sym^n (V^*) ] \otimes K [ V ])^{\SL (V)}
      =
      K [ \Sym^n (V^*) \oplus V ]^{\SL (V)} \\
      ( \text{resp.} \;
      & (K [ \Sym^n (V^*) ] \otimes K [ V^* ])^{\SL (V)}
      =
      K [ \Sym^n (V^*) \oplus V ^*]^{\SL (V)} ).
    \end{split}
  \end{equation}
\item Or yet differently, by dualizing (i) we see that $\Cb$ (resp.\
  $\gammab$) is nothing but an inclusion of the irreducible representation
  $\Sym^r (V)$ (resp.\ $\Sym^r (V^*)$) of $\SL (V)$ into either the
  finite-dimensional representation $\Sym^d (\Sym^n (V))$ or the
  infinite-dimensional representation $K [ \Sym^n (V^*) ]$.
\end{enumerate}

By studying the action of scalar (or diagonal) matrices, we see that the action
of elements of $\GL (V)$ is also intertwined, but only up to a scalar. More
precisely, for covariants we have
\begin{equation}\label{eq:weight_co}
  \Cb (F . T) = \det (T)^{(n d - r) / m} \Cb (F) . T\,,
\end{equation}
while for contravariants we have
\begin{equation}\label{eq:weight_contra}
  \gammab (F . T) = \det (T)^{(n d + r) / m} \gammab (F) . T\,.
\end{equation}
The relevant (integral!) quotient $(n d - r) / m$ (resp. $(n d + r) / m$\,) is
called the \defi{weight} of the covariant (resp. contravariant).

\begin{remark}
  The point of view (iv) above that considers covariants and contravariants as
  subrepresentations is quite valuable when we want to make the theory
  explicit. Indeed, one can decompose tensor powers $\Sym^d (\Sym^n (V))$ in
  computer algebra packages such as \Magma~\cite{Magma} or, much faster, in
  \LiE~\cite{LiE}. Using these packages for ternary quartics (so $\dim (V) = 3$
  and $n = 4$) shows that the first non-trivial contravariant of order $2$
  occurs for $d = 4$, in which case there is a unique such covariant up to
  scaling. This is the contravariant $\rhob$ constructed in
  Section~\ref{sec:dixmier}, which has also been mentioned in passing in the
  introduction. 

  The first covariants of order $2$ show up in degree $d = 5$. In this case the
  factor $\Sym^2 (V)$ occurs with multiplicity $2$, so any covariant of degree
  $5$ can uniquely be written as a combination
  \begin{equation}
    \lambda_1 \Cb_1 + \lambda_2 \Cb_2 : \Sym^4 (V^*) \to \Sym^2 (V^*)
  \end{equation}
  for two fixed independent such covariants $\Cb_1$ and $\Cb_2$ that can be
  taken to be the covariants $\Tb$ and $\Xb$ from~\eqref{eq:tau}.
\end{remark}

In Section~\ref{sec:jointinvs}, we will also consider \defi{joint invariants}.
These arise when considering the action of a subgroup $G$ of $\GL (V)$ on a
reducible representation, in our case direct sums of vector spaces
\begin{equation}
  S = \Sym^{n_1} (V^*) \oplus \ldots \oplus \Sym^{n_t} (V^*)\,.
\end{equation}
We have $G$ act diagonally on $S$, so that
\begin{equation}
  (F_1, \ldots, F_t) . T = (F_1 . T, \ldots, F_t . T)
\end{equation}
for $T \in G$ and $(F_1, \ldots, F_t) \in S$. This gives rise to the invariant
algebra $K [S]^G$, which has a natural grading by multi-degree in the
coefficients of the forms $F_i$. A form that is pure of multi-degree $(d_1,
\ldots, d_t)$ is denoted by $\jb_{d_1, \dots , d_t}$. We will call $d = d_1 +
\ldots + d_t$ the \defi{total degree} of such an invariant.

\subsection{Dixmier--Ohno invariants}
\label{sec:dixmier}

In this section we briefly recall the construction and notation of some of the
invariants, covariants and contravariants of ternary quartics. We refer
to~\cite{dixmier,elsenhans,giko} for more details.

Let $V$ be a vector space of dimension $3$ over $K$ with basis $v_1, v_2, v_3$
and corresponding dual basis $x_1, x_2, x_3$. Let
\begin{equation}
  D : K [x_1, x_2, x_3] \times K [v_1, v_2, v_3] \to K [x_1, x_2, x_3]
\end{equation}
be the differential operator that extends the linear contraction pairing $(v_i,
x_j) \to \delta_{ij}$ by associating to a monomial $v_1^{i_1} v_2^{i_2}
v_3^{i_3}$ of degree $m$ the operator
\begin{equation}
  \frac{\partial^m}{\partial x_1^{i_1} \partial x_2^{i_2} \partial x_3^{i_3}}.
\end{equation}

If $Q (x_1,x_2,x_3)$ is a ternary form, we let
\begin{equation}
  H(Q) = \frac{1}{2}
  \begin{bmatrix}
    \frac{\partial^2 Q}{\partial x_1^2} & \frac{\partial^2 Q}{\partial x_1
      \partial x_2 } &\frac{\partial^2 Q}{\partial x_1 \partial x_3} \\
    \frac{\partial^2 Q}{\partial x_1 \partial x_2} &\frac{\partial^2
      Q}{\partial x_2^2} &\frac{\partial^2 Q}{\partial x_2 \partial x_3 } \\
    \frac{\partial^2 Q}{\partial x_1 \partial x_3} &\frac{\partial^2
      Q}{\partial x_2 \partial x_3} &\frac{\partial^2 Q}{\partial x_3^2}
  \end{bmatrix}
\end{equation}
be its Hessian matrix. For a quadratic form $Q$ over $K$, we let $H(Q)^*$ be
the classical adjoint matrix of its Hessian $H (Q)$. Given two square matrices
$A = (a_{ij}),B = (b_{ij})$ of the same dimension, we also denote $\langle A,B
\rangle = \sum_{i,j} a_{i j} b_{i j}$.

With this notation, given a ternary quadric covariant $\Qb(x_1,x_2,x_3)$ and a
ternary quadric contravariant $\rhob(v_1,v_2,v_3)$, we let
\begin{equation}
  J_{11} (\Qb,\rhob) = \langle H (\Qb),   H (\rhob)\rangle,
  \quad
  J_{22} (\Qb,\rhob) = \langle H (\Qb)^*, H (\rhob)^* \rangle
\end{equation}
and
\begin{equation}
  J_{30} (\Qb) = \det (H (\Qb)), \quad J_{03} (\rhob) = \det (H (\rhob)).
\end{equation}

Dixmier~\cite{dixmier} and Ohno~\cite{ohno} (but see also~\cite{elsenhans})
have used these operators to determine new covariants, contravariants and
invariants starting from the ternary quartic covariant $\Fb$, its Hessian $\Hb
= 216^{-1} \, \det(H(\Fb))$ and two contravariants $\sigmab$ (of degree $2$ and
order $4$) and $\psib$ (of degree $3$ and order $6$) that appear
in~\cite[\S~92,\S~292]{salmon}. First, they define a quadratic contravariant of
degree 4
\begin{equation} \label{eq:rho}
  \rhob = 144^{-1} D (\Fb,\psib).
\end{equation}
From $\rhob$, one can derive two quadratic covariants of degree $5$ (denoted
$\tau$ and $\xi$ in the papers cited above)
\begin{equation}\label{eq:tau}
  \Tb = 144^{-1} D (\Fb,\rhob),
  \quad
  \Xb = 72^{-1} D (\Hb, \sigmab)
\end{equation}
and then two other quadratic contravariants, of degree 7 and 13,
\begin{equation}
  \etab = 12^{-1} D (\Xb, \rhob),\quad \chib = 8^{-1} D( D(\Tb, \psib), \psib)
\end{equation}
as well as a linear covariant of degree 14 (which is denoted by $\nu$ in the
papers cite above), namely
\begin{equation}
  \Nb = 8^{-1} D (D (\Hb, \rhob), \etab)\,.
\end{equation}

The \defi{Dixmier invariants} are then the algebraically independent invariants
\begin{equation}\label{eq:dixmier}
  \begin{array}{lll}
    \Ib_3 = 144^{-1} D (\Fb, \sigmab),
    &
    \Ib_9 = J_{11} (\Tb,\rhob),
    &
    \Ib_{15} = J_{30} (\Tb),
    \\
    \Ib_6 = 4608^{-1} (D (\Hb, \psib) - 8 \Ib_3^2),
    &
    \Ib_{12} = J_{03} (\rhob),
    &
    \Ib_{18} = J_{22} (\Tb, \rhob)
    \\
    \multicolumn{3}{l}{\hspace*{1cm}\text{and }\Ib_{27} = \disc (\Fb)
      \text{\ \ (the discriminant of $\Fb$)}\,.}
    
  \end{array}
\end{equation}
To obtain the full ring of invariants of ternary quartics, we have
to add the \defi{Ohno invariants}
\begin{equation}\label{eq:ohno}
  \begin{array}{lll}
    \Jb_9 = J_{11} (\Xb, \rhob),
    &
    \Jb_{15} = J_{30} (\Xb),
    &
    \Ib_{21} = J_{03} (\etab),
    \\
    \Jb_{12} = J_{11} (\Tb, \etab),
    &
    \Jb_{18}= J_{22} (\Xb, \rhob),
    &
    \Jb_{21} = J_{11} (\Nb, \etab).
  \end{array}
\end{equation}

\begin{theorem}[Ohno]\label{thm:DO}
  The ring of invariants $K [\Sym^4 (V^*)]^{\SL (V)}$ is generated by the 13
  invariants~\eqref{eq:dixmier} and~\eqref{eq:ohno}.
\end{theorem}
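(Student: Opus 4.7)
My plan follows the standard strategy, in the tradition of Dixmier and Ohno, of exhibiting a system of parameters and then computing the Hilbert--Poincaré series to pin down the secondary invariants.

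First, I would verify that the seven Dixmier invariants $\Ib_3, \Ib_6, \Ib_9, \Ib_{12}, \Ib_{15}, \Ib_{18}, \Ib_{27}$ form a homogeneous system of parameters of $K[\Sym^4(V^*)]^{\SL(V)}$. A dimension count is encouraging: $\dim \Sym^4(V^*) - \dim \SL(V) = 15 - 8 = 7$. Algebraic independence can be checked by specializing at a generic quartic (say a Fermat-plus-perturbation) and computing that the Jacobian of these seven invariants in the coefficients has rank $7$. To see that their common zero locus in $\Sym^4(V^*)$ equals the nullcone (i.e.\ contains no semistable form), one examines representatives of the unstable boundary given by the Hilbert--Mumford criterion and shows that at least one of $\Ib_3, \dots, \Ib_{27}$ must be nonzero on any semistable quartic. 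Since $\SL(V)$ is reductive, the Hochster--Roberts theorem then implies that $K[\Sym^4(V^*)]^{\SL(V)}$ is Cohen--Macaulay, and hence a \emph{free} module over the polynomial ring $K[\Ib_3, \Ib_6, \Ib_9, \Ib_{12}, \Ib_{15}, \Ib_{18}, \Ib_{27}]$.

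The next step is to compute the Hilbert--Poincaré series $H(t) = \sum_{d\geq 0} \dim_K K[\Sym^4(V^*)]^{\SL(V)}_d\, t^d$ via the Molien--Weyl integral over a maximal compact subgroup of $\SL_3(\C)$; this is a finite computation involving a residue calculation on a two-dimensional torus. Writing the result in the form
\begin{equation*}
  H(t) \;=\; \frac{N(t)}{\prod_{i \in \{3,6,9,12,15,18,27\}} (1-t^i)},
\end{equation*}
the numerator $N(t)$ should come out to a polynomial with seven nonnegative-integer coefficients encoding the degrees of a free basis of secondary invariants. One reads off that $N(t) = 1 + t^9 + t^{12} + t^{15} + t^{18} + t^{21} + t^{21}$ (with the two terms of degree $21$ distinct), matching exactly the degrees of the candidate secondary elements $1, \Jb_9, \Jb_{12}, \Jb_{15}, \Jb_{18}, \Ib_{21}, \Jb_{21}$.

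It then remains to show that these seven elements are actually a free basis of $K[\Sym^4(V^*)]^{\SL(V)}$ over $K[\Ib_3, \dots, \Ib_{27}]$. For this I would pick a small family of explicit quartics on which both the primary and candidate secondary invariants can be evaluated, then verify that the $7 \times 7$ matrix encoding the relevant coefficient linear-dependencies is invertible; equivalently, check that the candidate secondary invariants are linearly independent modulo the ideal generated by the primary invariants in a sufficient Jacobian sense. This combined with the matching of the Hilbert series forces the module spanned by the seven secondaries to equal the full invariant ring.

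The main obstacle, and what makes this theorem due to Ohno rather than Dixmier, is precisely the final verification that the six Ohno invariants $\Jb_9, \Jb_{12}, \Jb_{15}, \Jb_{18}, \Ib_{21}, \Jb_{21}$ really complete the Dixmier invariants into a generating set. Dixmier's original list is not enough to separate $\SL(V)$-orbits, and producing the correct \emph{two} independent invariants in degree $21$ together with confirming that no further generator is needed in any higher degree is the delicate arithmetic-combinatorial heart of the proof; this is where the explicit differential-operator constructions $J_{11}, J_{22}, J_{30}, J_{03}$ applied to the auxiliary co- and contravariants $\Tb, \Xb, \rhob, \etab, \Nb$ are essential, and where one most naturally relies on the computer-algebra verifications in the references cited.
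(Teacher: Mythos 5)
The paper does not actually give a proof of this theorem. Immediately after the statement it is attributed to Ohno's unpublished preprint, with a remark directing the reader to Elsenhans's published verification and to Girard--Kohel for the implementation; nothing in the article itself proves the claim. So there is no ``paper's own proof'' to compare against.

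Your high-level plan (exhibit the seven Dixmier invariants as a homogeneous system of parameters, invoke Hochster--Roberts to get the Cohen--Macaulay property, compute the Hilbert--Poincar\'e series by Molien--Weyl, then match against proposed generators) is indeed the broad strategy one finds in the Dixmier/Ohno/Elsenhans line of work. However, there is a genuine conceptual gap in the middle. You write that the numerator comes out as
\begin{equation*}
  N(t) = 1 + t^9 + t^{12} + t^{15} + t^{18} + 2\,t^{21},
\end{equation*}
``matching exactly the degrees of the candidate secondary elements $1, \Jb_9, \Jb_{12}, \Jb_{15}, \Jb_{18}, \Ib_{21}, \Jb_{21}$.'' That conflates ring generators with module generators. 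The theorem is a \emph{ring}-theoretic statement: thirteen elements generate the algebra. The numerator $N(t)$ of the Hilbert series counts a free $K[\Ib_3,\dots,\Ib_{27}]$-\emph{module} basis of secondary invariants, namely a basis of the finite-dimensional algebra obtained by reducing modulo the ideal of primaries. If $N(1)$ really were $7$, then every product of the six Ohno generators (e.g.\ $\Jb_9^2$, $\Jb_9\Jb_{12}$, \dots) would already lie in the ideal generated by the Dixmier invariants; that is far from true, and the actual numerator has many more terms and much larger $N(1)$. As a result the decisive ``match the Hilbert series to the candidate secondaries'' step, and the follow-up $7\times 7$ linear-algebra check, do not go through as written: one must enumerate all the monomials in $\Jb_9,\dots,\Jb_{21}$ that survive modulo the primaries, which is precisely the nontrivial bookkeeping that Ohno's argument (and Elsenhans's verification) carries out and which you have compressed into an incorrect guess. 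Fixing this would mean actually computing $N(t)$, enumerating enough monomials in the thirteen invariants to account for $N(1)$ secondaries, and verifying linear independence modulo the primaries degree by degree, which is essentially Elsenhans's approach.
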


The subscripts of the Dixmier--Ohno invariants indicate their degree in the
coefficients of the universal form $\Fb$. In what follows, this is the degree
to which we refer whenever we mention the degree of a homogeneous expression
in the Dixmier--Ohno invariants.

\begin{remark}
  The above theorem is originally due to Ohno~\cite{ohno}. Work by
  Elsenhans~\cite{elsenhans} gives a readable verification of his results.
  These invariants were further studied and implemented by Girard and Kohel
  in~\cite{giko}.
\end{remark}

\subsection{Joint invariants of octics and quartics}
\label{sec:jointinvs}

Let $W$ be a vector space of dimension $2$ over $K$ with basis $w_1, w_2$ and
corresponding dual basis $z_1, z_2$. We will now describe the basis of
fundamental invariants calculated in~\cite{olive} for the (diagonal) action of
$\SL (W)$ on $\Sym^8 (W^*) \oplus \Sym^4 (W^*)$.

The \defi{Clebsch--Gordan decomposition}~\cite{Fulton-Harris} is the
decomposition of $\SL (W)$-represen\-tations
\begin{equation}
  \Sym^{n_1} (W^*) \otimes \Sym^{n_2} (W^*)
  \simeq
  \bigoplus_{i=0}^{\min(n_1, n_2)} \Sym^{n_1 + n_2 -2i}(W^*).
\end{equation}
Let $i$ be an integer between $0$ and $\min (n_1, n_2)$. Then we can consider the
\defi{Clebsch-Gordan projector} (or \defi{transvectant})
\begin{equation}
  \Sym^{n_1} (W^*) \otimes \Sym^{n_2} (W^*)
  \longrightarrow
  \Sym^{n_1+n_2-2i}(W^*), \quad b_{n_1} \otimes b_{n_2}
  \mapsto 
  (b_{n_1}, b_{n_2})_{i}\,.
\end{equation}

An explicit formula for the transvectant operator~\cite{olver} can be obtained
by using the bi-differential \defi{Cayley operator},
\begin{equation}
  \Omega_{yz}(\,b_{n_1}(y_1, y_2)\cdot b_{n_2}(z_1,z_2)\,)
  :=
  \frac{\partial b_{n_1}}{y_1} \frac{\partial b_{n_2}}{z_2}
  -
  \frac{\partial b_{n_1}}{y_2} \frac{\partial b_{n_2}}{z_1}\,,
\end{equation}
and the \defi{polarization operator}
\begin{equation}
  \sigma_{y}(\,b_n(y_1,y _2)\,) := y_1 \frac{\partial b_n}{\partial y_1} + y_2
  \frac{\partial b_n}{\partial y_2}.
\end{equation}
Let $i$ be as before. Then the \defi{transvectant} of index $i$ is given by
\begin{equation}
  (b_{n_1}, b_{n_2})_i
  :=
  \frac{({n_1}-i)\,!}{{n_1}\,!}\, \frac{({n_2}-i)\,!}{{n_2}\,!}\,
  \Omega_{yz}^i\, \sigma_{y}^{{n_1}-i}\, \sigma_{z}^{{n_2}-i}\,
  \left(\,{b_{n_1}}(y_1,y_2)\cdot {b_{n_2}}(z_1,z_2)\,\right)\,.
\end{equation}

Now an important result of classical invariant theory states that taking the
closure of the universal pair $(\bb_{n_1},\,\bb_{n_2})\in \Sym^{n_1} (W^*)
\times \Sym^{n_2} (W^*)$ under appropriate transvectant operations generates
its covariant algebra~\cite{Pro1998}. Moreover, Gordan's algorithm enables one
to construct a minimal basis of this algebra~\cite{gordan68}.

Using this algorithm in the case $n_1 = 8$ and $n_2 = 4$, Olive constructed
Table~\ref{tab:jinv}, where the 63 elements of the invariant basis have been
overlaid in gray. We denote them by $\jb_{d_8,d_4}$, sorting by bidegree as
mentioned at the end of Section~\ref{sec:formalism}.

Two invariants in Olive's basis are pure invariants of $\bb_4$ (\textit{cf.}
Table~\ref{tab:covs4}), 9 of them are pure invariants of $\bb_8$ (\textit{cf.}
Table~\ref{tab:covs8}) and the 52 remaining are joint invariants (\textit{cf.}
Table~\ref{tab:s4s8}). From this table, we can for instance deduce that
\begin{multline*}
  2^{9} \cdot 3^{2} \cdot 5^{2} \cdot 7^{2} \times \jb_{1,2} =%
  140\,{{ b^{2}_{4,4}}}\,{ b_{8,0}}-35\,{ b_{4,3}}\,{ b_{4,4}}\,{ b_{8,1}}+\\
  \left( 10\,{ b_{4,2}}\,{ b_{4,4}}+5\,{{ b^{2}_{4,3}}
    } \right) { b_{8,2}}
  -\left( 5\,{ b_{4,1}}\,{ b_{4,4}}+5\,{ b_{4,2}}\,{ b_{4,3}} \right) {
    b_{8,3}}+\\
  \left( 4\,{
      b_{4,0}}\,{ b_{4,4}}+4\,{ b_{4,1}}\,{ b_{4,3}}+2\,{{ b^{2}_{4,2}}} \right) { b_{8,4}}- \left( 5\,{ b_{4,0}}\,{ b_{4,3}}+
    5\,{ b_{4,1}}\,{ b_{4,2}} \right) { b_{8,5}}\\
  + \left( 10\,{ b_{4,0}}\,{ b_{4,2}}+5\,{{ b^{2}_{4,1}}} \right) { b_{8,6}} - 35\,{
    b_{4,0}}\,{ b_{4,1}}\,{ b_{8,7}}+140\,{{ b^{2}_{4,0}}}\,{ b_{8,8}}\,.
\end{multline*}

\begin{table}[htbp]
  \caption{A minimal basis of 63 invariants for $\Sym^{8}(W^*)\oplus\Sym^{4}(W^*)$\,.}%
  \label{tab:jinv}
  \resizebox{0.95\linewidth}{!}{
    \begin{minipage}[b]{1.0\linewidth}
      \begin{center}
        \begin{footnotesize}
          \begin{math}
            \setlength{\arraycolsep}{0.5mm}
            \begin{array}{%
                rl>{\scriptstyle }c|%
                c@{\,\,}%
                rlc@{\,}|%
                rl>{\scriptstyle }c|%
                c@{\,\,}rlc@{\,}|%
                rl>{\scriptstyle }c|%
              }
              \multicolumn{2}{c}{Covariant} & _{(d,o)} &&
              \multicolumn{3}{c|}{Invariant} &
              \multicolumn{2}{c}{Covariant} & _{(d,o)} &
              \multicolumn{4}{c|}{Invariant} &
              \multicolumn{2}{c}{Covariant} & _{(d,o)} \\
              \hline
              \hline
              \gb_{1} & =\,\bb_4 & _{(1,4)} &%
              &\ctbl \jb_{0, 2} & \ctbl =\,\trv{\gb_{1}}{\gb_1}{4} &&
              \gb_{2} & =\,\trv{\gb_{1}}{\gb_1}{2} & _{(2,4)}  &%
              &\ctbl \jb_{0, 3} & \ctbl =\,\trv{\gb_{2}}{\gb_1}{4} &&
              \gb_{3} & =\,\trv{\gb_{2}}{\gb_1}{2} & _{(3,6)}\\
            \end{array}
          \end{math}
        \end{footnotesize}
      \end{center}\medskip
      \subcaption{Covariants of $\Sym^{4}(W^*)$.}%
      \label{tab:covs4}\medskip
    \end{minipage}}
  \hspace*{-1.2cm}\resizebox{0.95\linewidth}{!}{
    \begin{minipage}[b]{1.0\linewidth}
      \begin{center}
        \begin{footnotesize}
          \begin{math}
            \setlength{\arraycolsep}{0.4mm}
            \begin{array}{c@{\,}rlc@{\,}c|c@{\,\,}rlc@{\,}c|c@{\,\,}rlc@{\,}c|c@{\,\,}rlc@{\,}c}
              \multicolumn{4}{c}{Covariant} & _{(d,o)} &
              \multicolumn{4}{c}{Covariant} & _{(d,o)} &
              \multicolumn{4}{c}{Covariant} & _{(d,o)} &
              \multicolumn{4}{c}{Covariant} & _{(d,o)} \\
              \hline
              \hline
              &\fb_{1} & =\,\bb_8 && _{(1,8)} &%
              &\fb_{19} & =\,\trv{\fb_{13}}{\fb_1}{8} && _{(4,10)}  &%
              &\fb_{37} & =\,\trv{\fb_{33}}{\fb_1}{7} && _{(6,4)}  &%
              &\fb_{55} & =\,\trv{\fb_{51}}{\fb_1}{5} && _{(8,4)}\\
              &\ctbl{}\jb_{2,0} & \ctbl{}=\,\trv{\fb_1}{\fb_1}{8} && &%
              &\fb_{20} & =\,\trv{\fb_{12}}{\fb_1}{6} && _{(4,10)}  &%
              &\fb_{38} & =\,\trv{\fb_{32}}{\fb_1}{7} && _{(6,4)}  &%
              &\fb_{56} & =\,\trv{\fb_{50}}{\fb_1}{5} && _{(8,4)}\\
              &\fb_{3} & =\,\trv{\fb_1}{\fb_1}{6} && _{(2,4)}  &%
              &\fb_{21} & =\,\trv{\fb_{13}}{\fb_1}{7} && _{(4,12)}  &%
              &\fb_{39} & =\,\trv{\fb_{34}}{\fb_1}{8} && _{(6,6)}  &%
              &\fb_{57} & =\,\trv{\fb_{51}}{\fb_1}{4} && _{(8,6)}\\
              &\fb_{4} & =\,\trv{\fb_1}{\fb_1}{4} && _{(2,8)}  &%
              &\fb_{22} & =\,\trv{\fb_{13}}{\fb_1}{6} && _{(4,14)}  &%
              &\fb_{40} & =\,\trv{\fb_{33}}{\fb_1}{6} && _{(6,6)}  &%
              &\fb_{58} & =\,\trv{\fb_{50}}{\fb_1}{4} && _{(8,6)}\\
              &\fb_{5} & =\,\trv{\fb_1}{\fb_1}{2} && _{(2,12)}  &%
              &\fb_{23} & =\,\trv{\fb_{13}}{\fb_1}{4} && _{(4,18)}  &%
              &\fb_{41} & =\,\trv{\fb_{32}}{\fb_1}{6} && _{(6,6)}  &%
              &\ctbl{}\jb_{9,0} & \ctbl{}=\,\trv{\fb_{15}\fb_{16}}{\fb_1}{8} &&\\
              &\ctbl{}\jb_{3,0} & \ctbl{}=\,\trv{\fb_{4}}{\fb_1}{8} && &%
              &\ctbl{}\jb_{5,0} & \ctbl{}=\,\trv{\fb_{3}^{2}}{\fb_1}{8} && &%
              &\fb_{42} & =\,\trv{\fb_{34}}{\fb_1}{7} && _{(6,8)}  &%
              &\fb_{60} & =\,\trv{\fb_{58}}{\fb_1}{6} &&_{(9,2)}\\
              &\fb_{7} & =\,\trv{\fb_{5}}{\fb_1}{8} && _{(3,4)}  &%
              &\fb_{25} & =\,\trv{\fb_{20}}{\fb_1}{8} && _{(5,2)}  &%
              &\fb_{43} & =\,\trv{\fb_{34}}{\fb_1}{6} && _{(6,10)}  &%
              &\fb_{61} & =\,\trv{\fb_{57}}{\fb_1}{6} && _{(9,2)}\\
              &\fb_{8} & =\,\trv{\fb_{5}}{\fb_1}{7} && _{(3,6)}  &%
              &\fb_{26} & =\,\trv{\fb_{21}}{\fb_1}{8} && _{(5,4)}  &%
              &\ctbl{}\jb_{7,0} & \ctbl{}=\,\trv{\fb_{7}^{2}}{\fb_1}{8} && &%
              &\fb_{62} & =\,\trv{\fb_{16}\fb_{17}}{\fb_1}{8} &&_{(9,2)} \\
              &\fb_{9} & =\,\trv{\fb_{5}}{\fb_1}{6} && _{(3,8)}  &%
              &\fb_{27} & =\,\trv{\fb_{20}}{\fb_1}{7} && _{(5,4)}  &%
              &\fb_{45} & =\,\trv{\fb_{43}}{\fb_1}{8} && _{(7,2)} &%
              &\fb_{63} & =\,\trv{\fb_{58}}{\fb_1}{5} && _{(9,4)}\\
              &\fb_{10} & =\,\trv{\fb_{5}}{\fb_1}{5} && _{(3,10)}  &%
              &\fb_{28} & =\,\trv{\fb_{22}}{\fb_1}{8} && _{(5,6)}  &%
              &\fb_{46} & =\,\trv{\fb_{42}}{\fb_1}{7} && _{(7,2)} &%
              &\ctbl{}\jb_{10,0} & \ctbl{}=\,\trv{\fb_{17}\fb_{25}}{\fb_1}{8}&&  \\
              &\fb_{11} & =\,\trv{\fb_{5}}{\fb_1}{4} && _{(3,12)}  &%
              &\fb_{29} & =\,\trv{\fb_{21}}{\fb_1}{7} && _{(5,6)}  &%
              &\fb_{47} & =\,\trv{\fb_{43}}{\fb_1}{7} && _{(7,4)} &%
              &\fb_{65} & =\,\trv{\fb_{17}\fb_{27}}{\fb_1}{8} &&_{(10,2)}\\
              &\fb_{12} & =\,\trv{\fb_{5}}{\fb_1}{3} && _{(3,14)}  &%
              &\fb_{30} & =\,\trv{\fb_{22}}{\fb_1}{7} && _{(5,8)}  &%
              &\fb_{48} & =\,\trv{\fb_{42}}{\fb_1}{6} && _{(7,4)} &%
              &\fb_{66} & =\,\trv{\fb_{17}\fb_{26}}{\fb_1}{8} &&_{(10,2)}\\
              &\fb_{13} & =\,\trv{\fb_{5}}{\fb_1}{1} && _{(3,18)}  &%
              &\fb_{31} & =\,\trv{\fb_{23}}{\fb_1}{8} && _{(5,10)}  &%
              &\fb_{49} & =\,\trv{\fb_{43}}{\fb_1}{6} && _{(7,6)} &%
              &\fb_{67} & =\,\trv{\fb_{27}\fb_{29}}{\fb_1}{8} &&_{(11,2)}\\
              &\ctbl{}\jb_{4,0} & \ctbl{}=\,\trv{\fb_{9}}{\fb_1}{8} && &%
              &\fb_{32} & =\,\trv{\fb_{22}}{\fb_1}{6} && _{(5,10)}  &%
              &\fb_{50} & =\,\trv{\fb_{42}}{\fb_1}{5} && _{(7,6)} &%
              &\fb_{68} & =\,\trv{\fb_{27}\fb_{28}}{\fb_1}{8} &&_{(11,2)}\\
              &\fb_{15} & =\,\trv{\fb_{11}}{\fb_1}{8} && _{(4,4)}  &%
              &\fb_{33} & =\,\trv{\fb_{21}}{\fb_1}{5} && _{(5,10)}  &%
              &\fb_{51} & =\,\trv{\fb_{41}}{\fb_1}{4} && _{(7,6)} &%
              &\fb_{69} & =\,\trv{\fb_{29}\fb_{38}}{\fb_1}{8}&& _{(12,2)}\\
              &\fb_{16} & =\,\trv{\fb_{10}}{\fb_1}{7} && _{(4,4)}  &%
              &\fb_{34} & =\,\trv{\fb_{23}}{\fb_1}{6} && _{(5,14)}  &%
              &\ctbl{}\jb_{8,0} & \ctbl{}=\,\trv{\fb_{7}\fb_{16}}{\fb_1}{8} && \\
              &\fb_{17} & =\,\trv{\fb_{12}}{\fb_1}{8} && _{(4,6)}  &%
              &\ctbl{}\jb_{6,0} & \ctbl{}=\,\trv{\fb_{3}\fb_{7}}{\fb_1}{8} &&  &%
              &\fb_{53} & =\,\trv{\fb_{51}}{\fb_1}{6} && _{(8,2)}\\
              &\fb_{18} & =\,\trv{\fb_{12}}{\fb_1}{7} && _{(4,8)}  &%
              &\fb_{36} & =\,\trv{\fb_{33}}{\fb_1}{8} && _{(6,2)}  &%
              &\fb_{54} & =\,\trv{\fb_{50}}{\fb_1}{6} && _{(8,2)}\\
            \end{array}
          \end{math}
        \end{footnotesize}
      \end{center}\medskip
      \subcaption{Covariants of $\Sym^{8}(W^*)$.}%
      \label{tab:covs8}\medskip
    \end{minipage}}
  \resizebox{0.95\linewidth}{!}{
    \begin{minipage}[b]{1.0\linewidth}
      \begin{center}
        \begin{footnotesize}
          \begin{math}
            \setlength{\arraycolsep}{0.7mm}
            \begin{array}{c@{\,}>{\ctbl}r>{\ctbl}lc@{\,}|c@{\,\,}>{\ctbl}r>{\ctbl}lc@{\,}|c@{\,\,}>{\ctbl}r>{\ctbl}lc@{\,}|c@{\,\,}>{\ctbl}r>{\ctbl}lc@{\,}}
              \multicolumn{4}{c|}{Invariant} &
              \multicolumn{4}{c|}{Invariant} &
              \multicolumn{4}{c|}{Invariant} &
              \multicolumn{4}{c}{Invariant} \\
              \hline
              \hline
              &\jb_{1, 2}&=\,\trv{\fb_1}{\gb_1^2}{8}&&%
              &\jb_{2, 4}&=\,\trv{\fb_4}{\gb_{2}^2}{8}&&%
              &\jb_{4, 3}&=\,\trv{\fb_{18}}{\gb_1\gb_{2}}{8}&&%
              &\jb_{5, 3}&=\,\trv{\fb_{29}}{\gb_{3}}{6}&\\
              &\jb_{2, 1}&=\,\trv{\fb_3}{\gb_1}{4}&&%
              &\jb'_{2, 4}&=\,\trv{\fb_5}{\gb_1^2\gb_{2}}{12}&&%
              &\jb'_{4, 3}&=\,\trv{\fb_{17}}{\gb_{3}}{6}&&%
              &\jb'_{5, 3}&=\,\trv{\fb_{30}}{\gb_1\gb_{2}}{8}&\\
              &\jb_{1, 3}&=\,\trv{\fb_1}{\gb_1\gb_{2}}{8}&&%
              &\jb_{3, 3}&=\,\trv{\fb_{11}}{\gb_1^3}{12}&&%
              &\jb''_{4, 3}&=\,\trv{\fb_{21}}{\gb_1^3}{12}&&%
              &\jb_{6, 2}&=\,\trv{\fb_{37}}{\gb_{2}}{4}&\\%
              &\jb_{2, 2}&=\,\trv{\fb_4}{\gb_1^2}{8}&&%
              &\jb'_{3, 3}&=\,\trv{\fb_9}{\gb_1\gb_{2}}{8}&&%
              &\jb_{5, 2}&=\,\trv{\fb_{30}}{\gb_1^2}{8}&&%
              &\jb'_{6, 2}&=\,\trv{\fb_{38}}{\gb_{2}}{4}&\\%
              &\jb'_{2, 2}&=\,\trv{\fb_3}{\gb_{2}}{4}&&%
              &\jb''_{3, 3}&=\,\trv{\fb_8}{\gb_{3}}{6}&&%
              &\jb'_{5, 2}&=\,\trv{\fb_{27}}{\gb_{2}}{4}&&%
              &\jb''_{6, 2}&=\,\trv{\fb_{42}}{\gb_1^2}{8}&\\%
              &\jb_{3, 1}&=\,\trv{\fb_7}{\gb_1}{4}&&%
              &\jb_{4, 2}&=\,\trv{\fb_{15}}{\gb_{2}}{4}&&%
              &\jb''_{5, 2}&=\,\trv{\fb_{26}}{\gb_{2}}{4}&&%
              &\jb_{6, 3}&=\,\trv{\fb_8^2}{\gb_1^3}{12}&\\
              &\jb_{1, 4}&=\,\trv{\fb_1}{\gb_{2}^2}{8}&&%
              &\jb'_{4, 2}&=\,\trv{\fb_{18}}{\gb_1^2}{8}&&%
              &\jb_{6, 1}&=\,\trv{\fb_{37}}{\gb_1}{4}&&%
              &\jb_{7, 2}&=\,\trv{\fb_{48}}{\gb_{2}}{4}&\\
              &\jb_{2, 3}&=\,\trv{\fb_4}{\gb_1\gb_{2}}{8}&&%
              &\jb''_{4, 2}&=\,\trv{\fb_{16}}{\gb_{2}}{4}&&%
              &\jb'_{6, 1}&=\,\trv{\fb_{38}}{\gb_1}{4}&&%
              &\jb'_{7, 2}&=\,\trv{\fb_{47}}{\gb_{2}}{4}&\\
              &\jb'_{2, 3}&=\,\trv{\fb_5}{\gb_1^3}{12}&&%
              &\jb_{5, 1}&=\,\trv{\fb_{26}}{\gb_1}{4}&&%
              &\jb_{7, 1}&=\,\trv{\fb_{47}}{\gb_1}{4}&&%
              &\jb_{8, 1}&=\,\trv{\fb_{55}}{\gb_1}{4}&\\
              &\jb_{3, 2}&=\,\trv{\fb_7}{\gb_{2}}{4}&&%
              &\jb'_{5, 1}&=\,\trv{\fb_{27}}{\gb_1}{4}&&%
              &\jb'_{7, 1}&=\,\trv{\fb_{48}}{\gb_1}{4}&&%
              &\jb'_{8, 1}&=\,\trv{\fb_{56}}{\gb_1}{4}&\\
              &\jb'_{3, 2}&=\,\trv{\fb_9}{\gb_1^2}{8}&&%
              &\jb_{2, 5}&=\,\trv{\fb_5}{\gb_1\gb_{2}^2}{12}&&%
              &\jb_{3, 5}&=\,\trv{\fb_{11}}{\gb_1\gb_{2}^2}{12}&&%
              &\jb_{8, 2}&=\,\trv{\fb_{56}}{\gb_{2}}{4}&\\
              &\jb_{4, 1}&=\,\trv{\fb_{15}}{\gb_1}{4}&&%
              &\jb_{3, 4}&=\,\trv{\fb_{10}}{\gb_1\gb_{3}}{10}&&%
              &\jb_{4, 4}&=\,\trv{\fb_{20}}{\gb_1\gb_{3}}{10}&&%
              &\jb_{9, 1}&=\,\trv{\fb_{63}}{\gb_1}{4}&\\
              &\jb'_{4, 1}&=\,\trv{\fb_{16}}{\gb_1}{4}&&%
              &\jb'_{3, 4}&=\,\trv{\fb_{11}}{\gb_1^2\gb_{2}}{12}&&%
              &\jb'_{4, 4}&=\,\trv{\fb_{21}}{\gb_1^2\gb_{2}}{12}&&%
              &\jb_{10, 1}&=\,\trv{\fb_{25}^2}{\gb_1}{4}&\\
            \end{array}
          \end{math}
        \end{footnotesize}
      \end{center}\medskip
      \subcaption{Joint invariants}%
      \label{tab:s4s8}
    \end{minipage}}
\end{table}

\begin{remark}\label{rem:shioda}
  Note that the invariant basis $\{\jb_{2,0}, \jb_{3,0}, \ldots \jb_{10,0}\}$
  of $\Sym^{8}(W^*)$ given in Table~\ref{tab:covs8} differs slightly from the
  basis $\{\Jb_2, \Jb_3, \ldots, \Jb_{10}\}$ of~\cite{shioda67}. We have
  \begin{multline}
    \Jb_2 = 2^{7}\cdot3^{2}\cdot5\cdot7\ \jb_{2,0}\,,\ %
    \Jb_3 = 2^{10}\cdot3^{3}\cdot5\cdot7\ \jb_{3,0}\,,\ \\%
    \Jb_4 = 2^{13}\cdot3^{3}\cdot5^{2}\,(\,50\ \jb_{2,0}^2-33\ \jb_{4,0}\,) %
  \end{multline}
  and similar equations exist for the other $\Jb_d$.
\end{remark}

\section{Sections and rationality}

\subsection{A result by Van Rijnswou}
\label{sec:vr}

In this section we recall an explicit isomorphism of $\SL (W)$-representations
that was first constructed in~\cite{VanRijnswou}. For reasons that we mentioned
in the introduction it has proved to be an extremely fruitful inroad into our
problem.

Let $W$ be a vector space of dimension $2$ over $K$ with basis $w_1, w_2$ and
corresponding dual basis $z_1, z_2$. Consider the vector space $V = \Sym^2
(W)$, which is of dimension $3$. We fix the basis $v_1 = w_1^2$, $v_2 = 2 w_1
w_2$, $v_3 = w_2^2$ of $V$, with corresponding dual basis $x_1 = z_1^2$, $x_2
= \frac{1}{2} z_1 z_2$, $x_3 = z_2^2$. The induced left action of $\SL (W)$ on
$V$ in the bases chosen above is described by the homomorphism
\begin{align}\label{eq:GL2ToGL3}
  \begin{split}
    h : \SL (W) & \to \SL (\Sym^2 (W)) = \SL (V) \\
    \begin{pmatrix}
      a & b \\
      c & d
    \end{pmatrix}
    & \mapsto
    \begin{pmatrix}
      a^2 &   2 a b   & b^2 \\
      a c & a d + b c & b d \\
      c^2 &   2 c d   & d^2
    \end{pmatrix}
  \end{split}
\end{align}
since for example $w_1^2$ is sent to $(a w_1 + c w_2)^2 = a^2 w_1^2 + a c (2
w_1 w_2) + c^2 w_2^2 = a^2\, v_1 + a c\, v_2 + c^2\, v_3$.

It is known that the substitution action of $\SL (W)$ on the space of binary
forms $\Sym^2 (W^*)$ is orthonormal with respect to the discriminant form. So
too is the action on $\Sym^2 (W)$ above orthogonal with respect to the dual of
this discriminant form, which we calculate to be
\begin{equation}
  (z_1 z_2)^2 - 4 z_1^2 z_2^2 = 4 (x_2^2 - x_1 x_3)\,.
\end{equation}
In other words, the homomorphism~\eqref{eq:GL2ToGL3} has image in $\SO (x_2^2
- x_1 x_3)$. Because its tangent map is surjective, it is itself surjective,
as $\SO (x_2^2 - x_1 x_3)$ is irreducible. Though we will not use this fact,
we mention that its kernel has order $2$.

Similarly the homomorphism
\begin{align}\label{eq:GL2ToGL3Dual}
  \begin{split}
    h^* : \SL (W) & \to \SL (\Sym^2 (W)) = \SL (V) \\
    \begin{pmatrix}
      a & b \\
      c & d
    \end{pmatrix}
    & \mapsto
    \begin{pmatrix}
      a^2   &    a b    & b^2   \\
      2 a c & a d + b c & 2 b d \\
      c^2   &    c d    & d^2
    \end{pmatrix}
  \end{split}
\end{align}
is a degree $2$ surjection onto $\SO (v_2^2 - v_1 v_3)$. Note that the notation
$h^*$ is slightly abusive here, because it is not $h$ itself that is dualized;
rather, as a subgroup of $\SL (V)$ the image of $h$ fixes the element $x_2^2 -
x_1 x_3$ while $h^*$ fixes its dual $v_2^2 - v_1 v_3$.

The irreducible representations of $\SL (W)$ are up to isomorphism described by
the symmetric powers $\Sym^d (W^*)$ by~\cite[Proposition~3.2.1]{springer}. By
\cite[Theorem 5.4]{dolgainv}, there exists a decomposition
\begin{equation}\label{eq:equivariant_dumb}
  \Sym^4 (\Sym^2 (W^*))
  \cong
  \Sym^8 (W^*) \oplus \Sym^4 (W^*) \oplus \Sym^0 (W^*)
\end{equation}
of $\SL (W)$-representations. Following~\cite{VanRijnswou}, we will now
indicate how to calculate an explicit isomorphism~\eqref{eq:equivariant_dumb}.
We focus on the covariant case to fix ideas, so that we use the homomorphism
$h$; in the contravariant case one works with $h^*$ instead.

Consider the classical generators
\begin{equation}\label{eq:sl_gens}
  \el =
  \begin{pmatrix}
    0 & 1 \\
    0 & 0
  \end{pmatrix}, \;
  \hl =
  \begin{pmatrix}
    1 & 0 \\
    0 & -1
  \end{pmatrix}, \;
  \fl =
  \begin{pmatrix}
    0 & 0 \\
    1 & 0
  \end{pmatrix}
\end{equation}
of the Lie algebra $\sll (W)$ of $\SL (W)$, which acts on the right on both
sides of~\eqref{eq:equivariant_dumb} by taking the differential of the action
of the latter group. For example, the one-parameter subgroup of $\SL (W)$ given
by
\begin{equation}\label{eq:E_prim2}
  \begin{pmatrix}
    1 & \eps \\
    0 & 1
  \end{pmatrix}
\end{equation}
gives rise to $\el$ upon deriving with respect to the parameter $\eps$ at
$\eps = 0$.
The action of the matrix~\eqref{eq:E_prim2} sends $(z_1, z_2)$ to $(z_1 +
\eps z_2, z_2)$. Deriving the latter expression with respect to $\eps$ at $\eps
= 0$ we obtain $(z_2, 0)$, which indeed equals $(z_1, z_2) . \el$\,.

The right action on $\Sym^d (W^*)$ is derived from~\eqref{eq:sl_gens} by the
Leibniz rule; $\el$ sends a binary form $b$ to $
z_1 . \el\ \frac{\partial b}{\partial z_1} +  z_2 . \el \ \frac{\partial
  b}{\partial z_2} = z_2 \frac{\partial
  b}{\partial z_1}$, and similarly for the other generators.

Taking the derivative at the identity of the map $h$ in~\eqref{eq:GL2ToGL3}, we
obtain the action on $V$, which is represented by the matrices
\begin{equation}\label{eq:ad}
  (D h) \el =
  \begin{pmatrix}
    0 & 2 & 0 \\
    0 & 0 & 1 \\
    0 & 0 & 0
  \end{pmatrix}, \;
  (D h) \hl =
  \begin{pmatrix}
    2 & 0 & 0 \\
    0 & 0 & 0 \\
    0 & 0 & -2
  \end{pmatrix}, \;
  (D h) \fl =
  \begin{pmatrix}
    0 & 0 & 0 \\
    1 & 0 & 0 \\
    0 & 2 & 0
  \end{pmatrix}
\end{equation}
To verify the equalities in~\eqref{eq:ad}, we can also employ a more direct
argument. For example, via $h$ the action of the matrices~\eqref{eq:E_prim2}
transforms into that of
\begin{equation}\label{eq:E_prim3}
  \begin{pmatrix}
    1 & 2 \eps & \eps^2 \\
    0 & 1 & \eps \\
    0 & 0 & 1
  \end{pmatrix}
\end{equation}
and indeed upon deriving with respect to $\eps$ at $\eps = 0$ we obtain the
first matrix in~\eqref{eq:ad}.

In what follows, we consider $V$ as an $\SL (W)$-representation
via~\eqref{eq:ad}, so that the left action of $\el$ (resp.\ $\hl$, $\fl$) is
represented by the matrix $(D h) \el$ (resp.\ $(D h) \hl$, $(D h) \fl$)
in~\eqref{eq:ad} instead. Using the Leibniz rule we see can once more derive
the right action on $\Sym^4 (V^*)$: for example, the action of $\el$ sends a
ternary quartic form $F$ to
\begin{equation}
  2 x_2 \frac{\partial F}{\partial x_1}
  + x_3 \frac{\partial F}{\partial x_2} .
\end{equation}

We now use the theory of highest weight vectors on these representations of
$\sll (W)$. In a nutshell, the special case of this theory for representations
of $\sll (W)$ (concerning which a fuller account is available
in~\cite{Fulton-Harris}) amounts to the following observation. For the
standard representations $\Sym^p (W^*)$ of $\SL (W)$ we observe that there is
up to scaling a unique vector $u$ whose weight for the action of $\hl$ is the
highest possible, namely $u = z_1^p$.  This vector also has the property that
$u . \fl = 0$, and moreover the complete representation is spanned by the $p +
1$ powers $\left\{ u . \el^{(0)}, u .  \el^{(1)}, \ldots , u . \el^{(p)}
\right\}$, which indeed are up to scaling the vectors $\left\{ z_1^p,
  z_1^{p-1} z_2, \dots , z_2^p \right\}$.

To decompose an arbitrary finite-dimensional representation $U$ of $\sll (W)$
into its irreducible constituents, we first track down a basis of the subspace
whose elements are of highest weight $\lambda_0$ under the action of $\hl$. We
know by the above that their images under $\el$ generate a subrepresentation
that is isomorphic to a power $\Sym^{p_0} (W^*)$, the multiplicity being given
by the dimension of the subspace. Subsequently, we determine the subspace of
all vectors $u \in U$ that are of the next highest weight $\lambda_1$ under
$\hl$ but not yet in the image of the subrepresentations found and that
additionally satisfy $u . \fl = 0$ (a condition satisfied automatically for the
vectors of highest weight $\lambda_0$). The space of these vectors gives rise
to another subrepresentation with multiplicity as above. Continuing in this
way, we obtain our decomposition.

In our case we obtain that a subrepresentation of the left hand side
of~\eqref{eq:equivariant_dumb} that is isomorphic to $\Sym^8 (W^*)$ is
generated by the successive images under $\el$ the vector of highest weight
$x_1^4$, which is unique up to a scalar multiple. We map this vector to
$z_1^8$, the vector of highest weight in $\Sym^8 (W^*)$, which then forces our
hand by equivariance.

Another subrepresentation, isomorphic to $\Sym^4 (W^*)$, can be found by
considering a vector of weight $4$ that is annihilated by $\fl$. This is $x_1^3
x_3 - x_1^2 x_2^2$ or one of its multiples. We map this vector to $z_1^4$, the
vector of highest weight in $\Sym^4 (W^*)$, and again our hand is forced.
Finally, a new vector of weight $0$ that is annihilated by $\fl$ is given by
$x_1^2 x_3^2 - 2 x_1 x_2^2 x_3 + x_2^4$ or one of its multiples. We map this to
$1 \in \Sym^0 (W^*)$, and we finally have an equivariant
isomorphism~\eqref{eq:equivariant_dumb} for the homomorphism $h$. After
proceeding similarly for the dual version $h^*$ from~\eqref{eq:GL2ToGL3Dual},
we have two equivariant linear maps
\begin{equation}\label{eq:equivariant}
  \begin{array}{rcl}
    \ell, \ell^* : \Sym^4 (V^*) & \to &
    \Sym^8 (W^*) \oplus \Sym^4 (W^*) \oplus \Sym^0 (W^*)\,,\\[0.2cm]
    F & \mapsto & (b_8, b_4, b_0) ,
  \end{array}
\end{equation}
and our construction shows that the following theorem holds.

\begin{theorem}[Van Rijnswou, 2001~\cite{VanRijnswou}]
  We have
  \begin{equation}\label{eq:l_equiv}
    \ell (F . h(T)) = \ell(F) . T
  \end{equation}
  and
  \begin{equation}\label{eq:lstar_equiv}
    \ell^* (F . h^* (T)) = \ell^* (F) . T\,.
  \end{equation}
  In other words, $\ell$, (resp.\ $\ell^*$) gives an $\SL (W)$-equivariant
  linear map
  \begin{equation}
    \Sym^4 (\Sym^2 (W^*)) \to \Sym^8 (W^*) \oplus \Sym^4 (W^*) \oplus \Sym^0 (W*)
  \end{equation}
  when $\Sym^4 (\Sym^2 (W^*))$ is considered as an $\SL (W)$-representation via
  $h$ (resp.\ $h^*$).
\end{theorem}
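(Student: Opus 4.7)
The plan is to verify equivariance at the level of the Lie algebra $\sll(W)$ and then invoke connectedness of $\SL(W)$ to promote this to a group-theoretic statement. Since $\SL(W)$ is connected, an algebraic linear map between two finite-dimensional rational $\SL(W)$-representations is equivariant if and only if it intertwines the induced action of $\sll(W)$, generated by $\el$, $\hl$, $\fl$ from~\eqref{eq:sl_gens}. So the task reduces to checking that the prescriptions given in the paragraphs preceding the theorem genuinely define a morphism of $\sll(W)$-representations.

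First I would focus on $\ell$. The left-hand side $\Sym^4(\Sym^2(W^*))$ is made into an $\sll(W)$-representation through $(Dh)\el, (Dh)\hl, (Dh)\fl$ in~\eqref{eq:ad} together with the Leibniz rule on $\Sym^4(V^*)$; the right-hand side carries the standard $\sll(W)$-action on each summand $\Sym^p(W^*)$. The decomposition~\eqref{eq:equivariant_dumb} predicted by the Clebsch--Gordan formula (or the character computation in~\cite[Theorem 5.4]{dolgainv}) tells me that the weight multiplicities on the two sides match. It therefore suffices to produce three highest weight vectors on the left of weights $8$, $4$, $0$ (meaning vectors killed by $\fl$), send each to the corresponding highest weight vector $z_1^8$, $z_1^4$, $1$ on the right, and extend equivariantly by repeatedly applying $\el$. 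The candidates have already been identified: $x_1^4$, $x_1^3x_3-x_1^2x_2^2$, and $x_1^2x_3^2-2x_1x_2^2x_3+x_2^4$. I would verify by direct computation using~\eqref{eq:ad} and the Leibniz rule that each is indeed annihilated by $\fl$ and is an eigenvector of $\hl$ with the claimed eigenvalue, and that $x_1^4$ generates an $8$-dimensional invariant subspace, etc. Because the irreducible summands in~\eqref{eq:equivariant_dumb} are pairwise nonisomorphic and each occurs with multiplicity one, the three components of $\ell$ so constructed assemble into a well-defined linear map on $\Sym^4(\Sym^2(W^*))$ which is automatically $\sll(W)$-equivariant.

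Promoting to $\SL(W)$ then gives~\eqref{eq:l_equiv}: for $T\in\SL(W)$ and $F\in\Sym^4(V^*)$ we have $\ell(F.h(T))=\ell(F).T$, since the two sides agree on $\sll(W)$ and $\SL(W)$ is generated by the image of the exponential map applied to $\sll(W)$. For $\ell^*$ the argument is the mirror image: replace $h$ by $h^*$ from~\eqref{eq:GL2ToGL3Dual}, compute $(Dh^*)\el, (Dh^*)\hl, (Dh^*)\fl$, re-pick highest weight vectors in the $V^*$-coordinates relevant to $h^*$, and conclude~\eqref{eq:lstar_equiv} in the same way.

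The main obstacle is not conceptual but bookkeeping: one must choose the proportionality constants of the three highest weight vectors and track how $\el$-iteration scales the successive basis elements so that the images on both sides match in the right normalization. Once this is carried out carefully for one of $\ell$ and $\ell^*$, the other follows by the symmetric argument, and the final conclusion that each of $\ell$ and $\ell^*$ is an $\SL(W)$-equivariant linear map is automatic.
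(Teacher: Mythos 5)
Your proposal is correct and follows essentially the same route as the paper: construct $\ell$ (resp.\ $\ell^*$) by decomposing $\Sym^4(\Sym^2(W^*))$ into irreducibles via highest weight vectors for the differentiated action of $\sll(W)$ through $Dh$ (resp.\ $Dh^*$), send each highest weight vector to the corresponding one on the right, and extend by $\el$-iteration; equivariance at the Lie algebra level lifts to $\SL(W)$ by connectedness. The paper presents exactly this construction in the paragraphs preceding the theorem statement and then notes that the theorem follows from it.
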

Note that the maps $\ell$ and $\ell^*$ are not unique, but in light of Schur's
Lemma (or the explicit construction above) they are up to a scalar multiple per
irreducible component.

In coordinates, we can describe these maps as follows. Let a basis of the
left-hand side of~\eqref{eq:equivariant} be given by
\begin{equation}
  \begin{scriptstyle}
    x_1^4,\,
    x_1^3 x_2,\,
    x_1^3 x_3,\,
    x_1^2 x_2^2,\,
    x_1^2 x_2 x_3,\,
    x_1^2 x_3^2,\,
    x_1 x_2^3,\,
    x_1 x_2^2 x_3,\,
    x_1 x_2 x_3^2,\,
    x_1 x_3^3,\,
    x_2^4,\,
    x_2^3 x_3,\,
    x_2^2 x_3^2,\,
    x_2 x_3^3,\,
    x_3^4
  \end{scriptstyle}
\end{equation}
and let a basis for the right-hand side be given by
\begin{equation}
  \begin{scriptstyle}
    z_1^8,\,
    z_1^7 z_2,\,
    z_1^6 z_2^2,\,
    z_1^5 z_2^3,\,
    z_1^4 z_2^4,\,
    z_1^3 z_2^5,\,
    z_1^2 z_2^6,\,
    z_1 z_2^7,\,
    z_2^8,\,\ \ \ 
    z_1^4,\,
    z_1^3 z_2,\,
    z_1^2 z_2^2,\,
    z_1 z_2^3,\,
    z_2^4,\,\ \ \ 
    1
  \end{scriptstyle}
\end{equation}

Then with respect to these bases, $\ell$ is given by the right action of
\begin{equation}\label{eq:l_matrix}
  \renewcommand*{\arraystretch}{0.6}
  \left(
    \begin{array}{C{0.5em}C{0.5em}C{0.5em}C{0.5em}C{0.5em}C{0.5em}C{0.5em}C{0.5em}C{0.5em}C{0.5em}C{0.5em}C{0.5em}C{0.5em}C{0.5em}C{0.5em}}
      1 & . & . & . & . & . & . & . & . & . & . & . & . & . & . \\
      . & 1 & . & . & . & . & . & . & . & . & . & . & . & . & . \\
      . & . & 1 & . & . & . & . & . & . & \frac{6}{7} & . & . & . & . & . \\
      . & . & 1 & . & . & . & . & . & . & \text{-}\frac{1}{7} & . & . & . & . & . \\
      . & . & . & 1 & . & . & . & . & . & . & \frac{4}{7} & . & . & . & . \\
      . & . & . & . & 1 & . & . & . & . & . & . & \frac{8}{7} & . & . & \frac{8}{15} \\
      . & . & . & 1 & . & . & . & . & . & . & \text{-}\frac{3}{7} & . & . & . & . \\
      . & . & . & . & 1 & . & . & . & . & . & . & \frac{1}{7} & . & . & \text{-}\frac{2}{15} \\
      . & . & . & . & . & 1 & . & . & . & . & . & . & \frac{4}{7} & . & . \\
      . & . & . & . & . & . & 1 & . & . & . & . & . & . & \frac{6}{7} & . \\
      . & . & . & . & 1 & . & . & . & . & . & . & \text{-}\frac{6}{7} & . & . & \frac{1}{5} \\
      . & . & . & . & . & 1 & . & . & . & . & . & . & \text{-}\frac{3}{7} & . & . \\
      . & . & . & . & . & . & 1 & . & . & . & . & . & . & \text{-}\frac{1}{7} & . \\
      . & . & . & . & . & . & . & 1 & . & . & . & . & . & . & . \\
      . & . & . & . & . & . & . & . & 1 & . & . & . & . & . & . \\
    \end{array}
  \right)
\end{equation}
whereas $\ell^*$ is given by the right action of
\begin{equation}\label{eq:lstar_matrix}
  \renewcommand*{\arraystretch}{0.6}
  \left(
    \begin{array}{C{0.5em}C{0.5em}C{0.5em}C{0.5em}C{0.5em}C{0.5em}C{0.5em}C{0.5em}C{0.5em}C{0.5em}C{0.5em}C{0.5em}C{0.5em}C{0.5em}C{0.5em}}
      1 & . & . & . & . & . & . & . & . & . & . & . & . & . & . \\
      . & 2 & . & . & . & . & . & . & . & . & . & . & . & . & . \\
      . & . & 1 & . & . & . & . & . & . & \text{-}\frac{3}{14} & . & . & . & . & . \\
      . & . & 4 & . & . & . & . & . & . & \frac{1}{7} & . & . & . & . & . \\
      . & . & . & 2 & . & . & . & . & . & . & \text{-}\frac{2}{7} & . & . & . & . \\
      . & . & . & . & 1 & . & . & . & . & . & . & \text{-}\frac{2}{7} & . & . & \frac{1}{30} \\
      . & . & . & 8 & . & . & . & . & . & . & \frac{6}{7} & . & . & . & . \\
      . & . & . & . & 4 & . & . & . & . & . & . & \text{-}\frac{1}{7} & . & . & \text{-}\frac{1}{30} \\
      . & . & . & . & . & 2 & . & . & . & . & . & . & \text{-}\frac{2}{7} & . & . \\
      . & . & . & . & . & . & 1 & . & . & . & . & . & . & \text{-}\frac{3}{14} & . \\
      . & . & . & . & 16 & . & . & . & . & . & . & \frac{24}{7} & . & . & \frac{1}{5} \\
      . & . & . & . & . & 8 & . & . & . & . & . & . & \frac{6}{7} & . & . \\
      . & . & . & . & . & . & 4 & . & . & . & . & . & . & \frac{1}{7} & . \\
      . & . & . & . & . & . & . & 2 & . & . & . & . & . & . & . \\
      . & . & . & . & . & . & . & . & 1 & . & . & . & . & . & . \\
    \end{array}
  \right)
\end{equation}
where we have replaced zeros with dots for clarity.

\begin{remark}
  One can explicitly verify that the linear maps $\ell$, $\ell^*$ defined by
  the matrices~\eqref{eq:l_matrix} and~\eqref{eq:lstar_matrix} still define
  equivariant maps in the sense of~\eqref{eq:l_equiv}
  and~\eqref{eq:lstar_equiv} over fields of characteristic larger than $7$.
\end{remark}

\subsection{$(G,H)$-sections}\label{sec:GHsec}

We will now follow part of a beautiful procedure investigated by Katsylo
in~\cite{katsylo} (see also~\cite{bohning}) to prove the rationality of the
moduli space of genus $3$ curves. It is based on the following notion.

\begin{definition}\label{def:section}
  Let $G$ be a linear algebraic group acting on an irreducible quasi-projective
  variety $\Ys$. Let $f : H \to G$ be a group morphism, through which we
  consider $H$ to act on $\Ys$. Let $\Zs$ be a closed subvariety of $\Ys$. Then
  $\Zs$ is called a \defi{$(G,H)$-section} of the action of $G$ on $\Ys$ if
  \begin{enumerate}
  \item the stabilizer of $\Zs$ in $\Ys$ is the image of $f$;
  \item there exists an open subset $\Zs_1$ of $\Zs$ such that any two points
    of $\Zs_1$ that are $G$-equivalent in $\Ys$ are in fact $H$-equivalent in
    $\Zs$;
  \item $\Ys$ is the closure of $G \cdot \Zs$.
  \end{enumerate}
\end{definition}

\begin{remark}
  While the notion of a $(G, H)$-section depends on the morphism from $H$ to
  $G$, we do not mention this morphism explicitly in what follows.
\end{remark}

\begin{proposition}[{\cite[\S~3]{gatti-viniberghi}}]\label{prop:section}
  Suppose that $\Zs$ is a $(G,H)$-section of $\Ys$. Then the canonical
  restriction arrow $K (\Ys)^G \to K (\Zs)^H$ between fields of rational
  functions is an isomorphism. If we additionally assume that
  \begin{enumerate}
  \item $\Ys$ is an affine normal variety,
  \item $G$ is a linear algebraic group that does not admit any non-trivial
    character, and
  \item any closed orbit of $G$ in $\Ys$ intersects $\Zs$,
  \end{enumerate}
  then the canonical restriction arrow $K [\Ys]^G \to K [\Zs]^H$ between rings
  of regular functions is an isomorphism.
\end{proposition}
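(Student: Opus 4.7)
The plan is to prove the two statements in turn, with the second relying on the first.

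\emph{For (1), well-definedness and injectivity.} I would first verify that the restriction makes sense: any $f \in K(\Ys)^G$ has a $G$-invariant domain of regularity $U_f$, and condition (iii) forces $U_f \cap \Zs$ to be dense in $\Zs$; the resulting $f|_\Zs$ is automatically $H$-invariant since $H$ acts through $G$. Injectivity then follows directly from (iii): if $f|_\Zs = 0$, then by $G$-invariance $f$ vanishes on $G \cdot (U_f \cap \Zs)$, which is dense in $\Ys$.

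\emph{For (1), surjectivity.} Given $\phi \in K(\Zs)^H$ with domain $\Zs_\phi$, I would work on $\Zs_2 := \Zs_1 \cap \Zs_\phi$ with $\Zs_1$ as in (ii). The action map $a : G \times \Zs_2 \to \Ys$ is dominant by (iii), and the rational function $(g,z) \mapsto \phi(z)$ on $G \times \Zs_2$ is constant on fibres of $a$: if $a(g_1,z_1) = a(g_2,z_2)$, then $z_1$ and $z_2$ lie in the same $G$-orbit of $\Ys$, hence by (ii) in the same $H$-orbit of $\Zs$, so $\phi(z_1) = \phi(z_2)$ by $H$-invariance of $\phi$. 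Standard descent then produces a $G$-invariant rational function on $\Ys$ restricting to $\phi$.

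\emph{For (2).} Injectivity is the same as in (1). For surjectivity, I would take $\psi \in K[\Zs]^H$ with lift $f \in K(\Ys)^G$ from (1) and show $f \in K[\Ys]$. The pole locus of $f$ is $G$-invariant of pure codimension one in the normal affine $\Ys$; if nonempty, it contains a closed $G$-orbit $O$, which meets $\Zs$ at some point $z$ by hypothesis (iii) of (2). The no-character hypothesis allows one to write $f = u/v$ with $u, v \in K[\Ys]^G$ sharing no common prime divisor, since every $G$-invariant prime divisor on $\Ys$ is cut out by a regular semi-invariant with trivial character — hence a genuine invariant — and common factors can then be extracted. A contradiction is to be obtained by identifying the pole of $f$ at $z$ with a genuine pole of $f|_\Zs = \psi$ at $z$ on $\Zs$.

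\emph{Main obstacle.} Closing this last step cleanly — ensuring that the pole of $f$ truly survives restriction to $\Zs$, rather than being cancelled by simultaneous vanishing of $u$ along $O$ — is the subtlest point, since for a $G$-invariant $u$ that is nonvanishing on the generic orbit of a prime divisor one still has to rule out its vanishing on the degenerate closed orbit $O$. The cleanest route avoiding this delicate pole-multiplicity analysis is to pass to the affine GIT quotients $\Ys /\!/ G := \Spec K[\Ys]^G$ and $\Zs /\!/ H := \Spec K[\Zs]^H$: by (1), the induced morphism $\Zs /\!/ H \to \Ys /\!/ G$ is birational; by (iii) of (2) together with the closed-orbit/closed-point correspondence, it is surjective on closed points; and both schemes being normal affine, Zariski's main theorem yields an isomorphism, and hence the desired equality $K[\Ys]^G = K[\Zs]^H$.
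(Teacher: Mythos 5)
The paper does not give a self-contained argument here: it cites Gatti--Viniberghi for the affine case and invokes Rosenlicht's theorem only to reduce the first claim to that setting. Your proposal is a direct proof from first principles, so the two differ by design. Part (1) of your proof is sound: the descent step in the surjectivity argument amounts to the standard observation that, in characteristic zero, a rational function on the source which is constant on general fibres of a dominant morphism has graph closure in $\Ys \times \A^1$ projecting birationally onto $\Ys$, hence descends.

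Part (2) has genuine gaps beyond the one you flag. First, the claim that every $G$-invariant prime divisor on $\Ys$ is cut out by a regular semi-invariant requires $\Ys$ to be \emph{factorial}, not merely normal, since on a normal variety a prime divisor need not be principal; in the paper's application $\Ys$ is open in an affine space, so this is harmless, but under the stated hypothesis it is unjustified. Second, the GIT route is closer to a correct argument, but Zariski's Main Theorem does not apply in the form you cite: it needs finite fibres, which you have not established, and ``birational plus surjective onto a normal affine'' is not by itself sufficient. What actually closes the argument is the characterization of a Noetherian normal domain as the intersection of its height-one localizations, each a DVR, together with the fact that a DVR is a maximal proper subring of its fraction field. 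Concretely, writing $A = K[\Ys]^G$ and $B = K[\Zs]^H$ with common fraction field $L$, surjectivity of $\Spec B \to \Spec A$ supplies for each height-one prime $\mathfrak{p}$ of $A$ a prime $\mathfrak{q}$ of $B$ lying over it, so that $A_{\mathfrak{p}} \subseteq B_{\mathfrak{q}} \subsetneq L$ forces $B_{\mathfrak{q}} = A_{\mathfrak{p}}$ and hence $B \subseteq \bigcap_{\mathfrak{p}} A_{\mathfrak{p}} = A$. Finally, both the closed-orbit/closed-point correspondence and the finite generation of $K[\Ys]^G$ (needed to make $A$ Noetherian and $\Spec A$ a variety) require $G$ reductive; this is not among the stated hypotheses, though it holds for $\SL(V)$ in the paper's application. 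You should also observe that your GIT argument makes no visible use of the ``no non-trivial character'' hypothesis, which suggests either that it is redundant under reductivity or that the intended argument is the semi-invariant one you first sketched, for which the hypothesis is essential.
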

\begin{proof}
  This proposition is proved in~\cite[Proposition~4]{gatti-viniberghi} for the
  affine case. The generalization of the first part of the result follows from
  Rosenlicht's theorem~\cite[Theorem~6.2]{dolgainv}; this shows the existence
  of an invariant affine open in $\Zs$ whose field of $G$-invariant rational
  functions equals that of $\Zs$. The proof
  in~\cite[Proposition~4]{gatti-viniberghi} then still applies.
\end{proof}

The situation in which we make use of Proposition~\ref{prop:section} is the
following. We let $\Ys$ be the variety whose $K$-points are given by
\begin{equation}
  \begin{array}{lll}
    \Ys (K)
    &=& \left\{ F \in \Sym^4 (V^*) : \disc (\rhob (F)) \neq 0 \right\}\,.\\
  \end{array}
\end{equation}
Note that we may choose any co- or contravariant of order 2 to define $\Ys
(K)$, but since among these $\rhob$ is of smallest degree (namely $4$), it is
natural to focus on it here.
Consequently,
\begin{equation}
  \begin{array}{lll}
    \Ys (K)
    &=& \left\{ F \in \Sym^4 (V^*) : \Ib_{12} (F) \neq 0 \right\}\,.
  \end{array}
\end{equation}
That is, $\Ys$ is the open subvariety of $\Sym^4 (V^*)$ whose geometric points
correspond to those ternary quartics $F$ for which the contravariant $\rhob
(F)$ does not degenerate. The subvariety $\Zs$ is given by the locus of
quartics on which the contravariant $\rhob$ is normalized, so that
\begin{equation}
  \Zs (K)
  = \left\{ F \in \Sym^4 (V^*) : \text{$\rhob (F) = u\,(v_2^2-v_1v_3)$ for some
      $u \in K^*$} \right\} .
\end{equation}

We let $G = \SL (V)$. Note that by our restriction to the open affine where
$\Ib_{12} \neq 0$ we have that the ring of invariants
\begin{equation}
  K [\Ys]^G = K [ \Sym^4 (V^*) ]^G_{\Ib_{12}}
\end{equation}
is the localization of the ring of Dixmier--Ohno invariants with respect to
$\Ib_{12}$.

The stabilizer of $\Zs$ in $G$ is given by $\langle \zeta_3 \rangle \cdot \SO
(v_2^2 - v_1 v_3)$ where the cube root of unity $\zeta_3$ acts via its
inclusion into $G$ as a multiple of the identity matrix. This stabilizer
$\langle \zeta_3 \rangle \cdot \SO (v_2^2 - v_1 v_3)$ is the image of $H =
\langle \zeta_3 \rangle \cdot \SL (W)$ (where again $\zeta_3$ is considered as a
multiple of the identity) by the homomorphism
\begin{equation}
  f = h^* :
  H = \langle \zeta_3 \rangle \SL (W)
  \to
  \SL (V) = G
\end{equation}
given in~\eqref{eq:GL2ToGL3Dual}. Note that $\Zs$ is defined by $5$ equations
of degree $4$, and that the regular function $\ub$ on $\Zs$ defined by $\rhob
(F) = \ub (F) (v_2^2 - v_1 v_3)$ is invariant under $\SO (v_2^2 - v_1 v_3)$. It
transforms by $\zeta_3 . \ub = \zeta_3 \ub$ under the action of $\langle
\zeta_3 \rangle$. Putting all of this together, we obtain the following result.

\begin{lemma}\label{lem:sec}
  With definitions and notations as above, 
  $\Zs$ is a $(G, H)$-section of $\Ys$, and 
  the canonical restriction map $K [\Ys]^G \to K [\Zs]^H$ is an
  isomorphism. Moreover, we have that $K [\Zs]^{\SL (W)} = K [\Zs]^{H} [\ub]$.
\end{lemma}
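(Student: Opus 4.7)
The plan is to verify the three conditions of Definition \ref{def:section} in turn, then to check the three additional hypotheses of Proposition \ref{prop:section} in order to promote the equality of fields to an equality of rings of regular functions, and finally to establish the last claim by decomposing $K[\Zs]^{\SL(W)}$ under the commuting action of $\langle \zeta_3 \rangle$.

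For condition (i), the setwise stabilizer of $\Zs$ consists of those $T \in \SL(V)$ for which $q . T$ is a scalar multiple of $q$: indeed, if $T$ preserves $\Zs$ then for any $F \in \Zs$ with $\ub(F) \neq 0$ the covariance of $\rhob$ and the membership $F . T \in \Zs$ force $q . T = \lambda q$ for some $\lambda$. Taking determinants in dimension $3$ yields $\lambda^3 = 1$, and the three cosets $\zeta_3^k I \cdot \SO(q)$ exhaust the similitude group of $q$ inside $\SL(V)$, so this stabilizer coincides with the image of $f = h^*$. Condition (ii) then follows at once with $\Zs_1 = \Zs$: if $F_2 = F_1 . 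T$ with $F_1, F_2 \in \Zs$, the same argument places $T$ in the stabilizer, hence in $f(H)$. For condition (iii), nondegeneracy of $\rhob(F)$ provides some $S \in \GL(V)$ with $\rhob(F) . S = q$; rescaling $S$ by a cube root of $\det(S)^{-1}$ produces $T \in \SL(V)$ with $F . T \in \Zs$, so in fact $\Ys = G \cdot \Zs$ on the nose.

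To invoke Proposition \ref{prop:section}, I would note that $\Ys$ is the principal open $\{\Ib_{12} \neq 0\}$ in the smooth affine space $\Sym^4(V^*)$, hence affine and normal; that $G = \SL(V)$ is semisimple and so has trivial character group; and that condition (iii) in its stronger form above ensures every $G$-orbit, closed or not, meets $\Zs$. This delivers the claimed isomorphism $K[\Ys]^G \xrightarrow{\sim} K[\Zs]^H$.

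For the final identity, the scalars $\langle \zeta_3 I \rangle$ commute with $f(\SL(W))$ inside $G$, so $\langle \zeta_3 \rangle$ acts on $K[\Zs]^{\SL(W)}$ and splits it into its three isotypic components $V_0 \oplus V_1 \oplus V_2$ with $V_0 = K[\Zs]^H$. A short computation with the right action of $\zeta_3 I$ and the weight formula \eqref{eq:weight_contra} yields $\ub(F . \zeta_3 I) = \zeta_3\, \ub(F)$, so that $\ub \in V_1$; moreover, on $\Zs$ one has $\Ib_{12} = \ub^3 J_{03}(q)$ with $J_{03}(q) \neq 0$, and since $\Ib_{12}$ is nowhere zero on $\Ys$ this makes $\ub$ a unit in $K[\Zs]$. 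Consequently, for any $f \in K[\Zs]^{\SL(W)}$ written as $f = f_0 + f_1 + f_2$ with $f_i \in V_i$, each quotient $f_i / \ub^i$ is a regular $H$-invariant, so $V_i = \ub^i K[\Zs]^H$ and $K[\Zs]^{\SL(W)} = K[\Zs]^H[\ub]$. The most delicate step is the correct bookkeeping of the scalar action in the weight formula to determine how $\zeta_3 I$ acts on $\ub$; once that character is pinned down, the rest of the argument is formal.
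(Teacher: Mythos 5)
Your proof is correct and follows essentially the same route as the paper. The structure is identical: verify the three conditions of Definition~\ref{def:section}, check the three supplementary hypotheses of Proposition~\ref{prop:section}, and then analyze the $\langle \zeta_3 \rangle$-isotypic decomposition of $K[\Zs]^{\SL(W)}$ with $\ub$ a unit lying in the weight-one component. The only differences are cosmetic: you spell out the stabilizer computation (the similitude group of $q$ inside $\SL(V)$ is $\langle \zeta_3 I\rangle \cdot \SO(q)$) and the cube-root rescaling for condition (iii), whereas the paper defers the former to the preceding discussion and invokes transitivity of $\SL(V)$ on non-degenerate conics for the latter. One small terminological slip worth fixing: $\rhob$ is a \emph{contravariant}, so the phrase ``covariance of $\rhob$'' should be ``equivariance of $\rhob$'' (or the transformation rule~\eqref{eq:weight_contra}); the underlying computation you perform is nonetheless correct.
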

\begin{proof}
  We have already mentioned that hypothesis (i) of
  Definition~\ref{def:section} is satisfied. We can take $\Zs_1 = \Zs$ in (ii)
  since by covariance any isomorphism between two forms in $\Zs$ fixes the
  form $v_2^2 - v_1 v_3$ up to a scalar. Since $\SL (V)$ acts transitively on
  non-degenerate quadric curves, we then also have that $\Ys = G . \Zs$ so
  that certainly (iii) holds.  To show the second claim, we have to check that
  the assumptions in Proposition~\ref{prop:section} hold in our case. Because
  $\Ys$ is an open subset of an affine space, we have (i). Moreover $G = \SL
  (V)$ does not admit any non-trivial character, so that (ii) is also
  verified. Again (iii) follows from our considerations in
  Section~\ref{sec:vr}; in fact any orbit of $G$ in $\Ys$ intersects $\Zs$.

  To obtain the final statement of the lemma, it remains to investigate the
  action of the central factor $\langle \zeta_3 \rangle$ of $H$ on the ring $K
  [\Zs]^{\SL (W)}$. We can decompose
  \begin{equation} \label{eq:decompo}
    K [\Zs]^{\SL (W)} = \bigoplus_{i=0}^2 M_i
  \end{equation}
  where $\langle \zeta_3 \rangle$ acts on the $i$-th $K$-submodule $M_i$ as
  scalar multiplication by $\zeta_3^i$. We have $M_0 = K [\Zs]^{H}$, so that
  our claim follows from the fact that $M_i = \ub^i M_0$ for $i > 0$. This, in
  turn, is a consequence of $\ub$ being a unit in $K [\Zs]$; indeed, $ \rhob
  (F) = \ub (F) \cdot (v_2^2 - v_1 v_3)$, and taking the discriminant on both
  sides, we get $\disc \rhob (F) = \Ib_{12} = \ub (F)^3 \cdot \disc (v_2^2 -
  v_1 v_3)$ (see also~\eqref{eq:disc}).
\end{proof}

Now let $\Ys' = \Sym^8 (W^*) \oplus \Sym^4 (W^*) \oplus \Sym^0 (W^*)$. In
Section~\ref{sec:vr} we have seen that there is an isomorphism of $\SL
(W)$-representations $\ell^* : \Ys \to \Ys'$. Let $\Zs'$ be the image of $\Zs$
under the isomorphism $\ell^*$. Then $\ell^*$ induces an isomorphism $K
[\Zs]^{\SL (W)} \to K [\Zs']^{\SL (W)}$.

Let $\jb$ be the restriction to $\Zs'$ of a degree $d$ joint invariant of $\SL
(W) \subset H$ on $\Ys'$. Then certainly $(\jb \ell^*)^3$ is an $H$-invariant
function on $\Zs$. Therefore by Proposition~\ref{prop:section} we can write
\begin{equation}
  (\jb \ell^*)^3 = \frac{\Qb}{\Ib_{12}^n}
\end{equation}
for some $n \in \Z$ and for some polynomial $\Qb$ in the Dixmier--Ohno
invariants that we may suppose not to be divisible by $\Ib_{12}$. Note that
$\Qb$ is homogeneous because $\jb$ and $\Ib_{12}$ are.

We see that $\jb \ell^* \in K [Z]^{\SL (W)}$ acquires a character under the
action of $\langle \zeta_3 \rangle$. Now recall from the end of the proof of
Lemma~\ref{lem:sec} that we have $\ub^3 = \Ib_{12} / 4$.  Therefore by our
hypothesis $\Ib_{12} \nmid \Qb$ and~\eqref{eq:decompo} we see that $\Qb$ is
actually in $K [\Zs]^H$, and we obtain the following.

\begin{proposition}\label{prop:rationality}
  Let $\jb$ be a joint invariant of degree $d$ for the action of $\SL (W)$ on
  $\Sym^8 (W^*) \oplus \Sym^4 (W^*) \oplus \Sym^0 (W^*)$. Then the composition
  $\jb \ell^*$, considered as a regular function on $\Zs \subset \Ys$, admits
  an expression of homogeneous degree $d$ of the form
  \begin{equation}
    \jb \ell^* = \frac{\Pb}{\ub^n}
  \end{equation}
  where $n \ge 0$ and where $\Pb$ is a polynomial in the Dixmier--Ohno invariants
  of homogeneous degree $d + 4 n$. Moreover, $(\jb \ell^*)^3$ extends to $\Ys$,
  where it is described by an expression of homogeneous degree $3 d$ of the
  form
  \begin{equation}
    (\jb \ell^*)^3 = \frac{\Pb^3}{\Ib_{12}^n}\,.
  \end{equation}
\end{proposition}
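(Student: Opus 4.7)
The plan is to combine the decomposition $K[\Zs]^{\SL(W)} = \bigoplus_{i=0}^{2} \ub^{i} K[\Zs]^{H}$ from Lemma~\ref{lem:sec} with a character analysis under the central subgroup $\langle \zeta_3\rangle$, and then convert to the stated form using the relation $\ub^{3} = c\cdot \Ib_{12}$ (for a non-zero constant $c$ depending on the discriminant normalization) established at the end of the proof of that lemma. First I observe that $\jb\ell^{*}$ lies in $K[\Zs]^{\SL(W)}$: the function $\jb$ is $\SL(W)$-invariant on $\Ys'$ and $\ell^{*}$ is $\SL(W)$-equivariant, so the composition is $\SL(W)$-invariant on $\Zs$; since each of $b_{8}, b_{4}, b_{0}$ is linear in $F$, it is homogeneous of degree $d$ in the coefficients of $F$. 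Decomposing $\jb\ell^{*} = \sum_{i=0}^{2} \ub^{i} m_{i}$ with $m_{i}\in K[\Zs]^{H}$ is then canonical.

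The central step is to show that at most one summand is non-zero. The scalar matrix $\zeta_3 \in G$ acts by $F\mapsto \zeta_3 F$, scaling $\jb\ell^{*}$ by $\zeta_{3}^{d}$. Since $\rhob$ is of degree $4$ in $F$, we have $\zeta_3.\ub = \zeta_3 \ub$, while the factors $m_i\in K[\Zs]^{H}$ are $\langle\zeta_3\rangle$-invariant; thus $\ub^{i}m_{i}$ transforms by $\zeta_{3}^{i}$. Comparing characters forces $m_{i}=0$ unless $i\equiv d\pmod{3}$, leaving $\jb\ell^{*} = \ub^{i}m_{i}$ for the unique $i\in\{0,1,2\}$ congruent to $d$ modulo $3$. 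By Lemma~\ref{lem:sec}, the restriction $K[\Ys]^{G}\to K[\Zs]^{H}$ is an isomorphism and $K[\Ys]^{G} = K[\Sym^{4}(V^{*})]^{G}_{\Ib_{12}}$ is the localization of the Dixmier--Ohno ring at $\Ib_{12}$; consequently $m_{i} = \Pb_{0}/\Ib_{12}^{n_{0}}$ for some polynomial $\Pb_{0}$ in the Dixmier--Ohno invariants and some $n_{0}\geq 0$.

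To finish, I substitute $\Ib_{12}^{n_{0}} = c^{n_{0}} \ub^{3 n_{0}}$ to obtain $\jb\ell^{*} = c^{-n_{0}} \Pb_{0}/\ub^{3 n_{0} - i}$. If $3 n_{0} - i < 0$, I multiply numerator and denominator by suitable powers of $\ub^{3} = c\,\Ib_{12}$ to bring $n := 3 n_{0} - i$ into $\Z_{\geq 0}$, absorbing all resulting constants into the numerator to produce the required polynomial $\Pb$. The weighted-degree count $d + 4n$ for $\Pb$ follows from $\ub$ having degree $4$ in $F$ and $\jb\ell^{*}$ having degree $d$. For the cube formula, raising $\jb\ell^{*} = \Pb/\ub^{n}$ to the third power and using $\ub^{3n} = c^{n} \Ib_{12}^{n}$ yields $(\jb\ell^{*})^{3} = c^{-n}\Pb^{3}/\Ib_{12}^{n}$; the constant $c^{-n}$ can be absorbed into $\Pb^{3}$ by a rescaling of $\Pb$, or equivalently the normalization of $\ub$ can be fixed once and for all so that $c=1$. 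The main obstacle is not conceptual but one of bookkeeping: tracking the scalar constants introduced by the discriminant normalization and verifying that the weighted degrees line up as claimed.
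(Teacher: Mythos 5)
Your proof is correct and mirrors the paper's: both rest on the eigenspace decomposition $K[\Zs]^{\SL(W)} = \bigoplus_{i=0}^2 \ub^i K[\Zs]^H$ from Lemma~\ref{lem:sec}, the relation $\ub^3 = c\,\Ib_{12}$, and the identification of $K[\Zs]^H$ with the Dixmier--Ohno ring localized at $\Ib_{12}$. The only (inessential) organizational difference is that the paper first cubes $\jb\ell^*$ so as to land directly in $K[\Zs]^H$ and then unwinds via the $\langle\zeta_3\rangle$-character, whereas you locate $\jb\ell^*$ in its eigenspace $\ub^i K[\Zs]^H$ from the start.
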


We need to analyze the pole of $\jb$ at the zero locus of $\ub$. In order to do
this, we will construct an ``association'' $\Ys \to \Zs$ that, given a plane
quartic $F$ corresponding to a point of $\Ys$, returns an isomorphic quartic in
$\Zs$. This association, while not a morphism of varieties, is still defined by
algebraic means in terms of universal forms. Moreover, it does descend to give
a well-defined morphism of quotient spaces. Its construction is the objective
of the next section, which will also show, among other things, that for the
$\SL (W)$-invariant function $\bb_0$ that is homogeneous of degree $1$ we have
that $\Ib_9 = 5 \ub^2 \cdot (\bb_0 \ell^*)$ (\cf\ \infra,
Lemma~\ref{lem:I9b0}).

\subsection{Normalizing quadrics}
\label{sec:normalization}

In order to prove our results and ultimately to perform our computations, we
need to transform a generic contravariant quadric into a multiple of $v_2^2 -
v_1 v_3$. To do this, we forget about contravariants for a moment and start out
with the universal ternary quadric form
\begin{equation}\label{eq:univ_quadric}
  \Qb = a x_2^2 + b x_2 x_1 + c x_2 x_3 + d x_1^2+ e x_1 x_3 + f x_3^2
\end{equation}
over $K$. We want to find a universal linear transformation that transforms
$\Qb$ to a form $\tilde{\Qb} = u (x_2^2 - x_1 x_3)$ with $u \in K [a, b, c, d,
e, f]$. Concretely, this will be a linear transformation over a finite
extension of $K (a, b, c, d, e,f )$ that sends $\Qb$ to a non-zero multiple of
$x_2^2 - x_1 x_3$.

We will construct this change of variables in such a way that:
\begin{enumerate}
\item we only take one square root;
\item the coefficients of the transformations are homogeneous of degree zero in
  the coefficients of $\Qb$;
\item $\ub$ is a monomial of small degree (in fact we will have $\ub = a$).
\end{enumerate}
As our transformations will involve a square root, we need to define the degree
of an expression involving such roots in order to make sense of (ii) above.

\begin{definition}\label{def:algdegree}
  Consider a ring $R = K [a_1, \ldots, a_n, r]$, where $a_i$ are algebraically
  independent variables and where $r$ is a root of an irreducible polynomial
  $x^m +c_1 x^{m-1}+ \ldots + c_m$ such the $c_i$ are homogeneous elements
  of $K (a_1, \ldots, a_n)$ whose degree is linear in $i$. We define the
  homogeneous degrees of the $a_i$ to be $1$, and that of $r$ to be $\deg(c_m)
  / m$. The homogeneous degree of an element $r$ of $R$, is defined to be the
  common homogeneous degree of the monomials in a representation of $r$, if
  such a common degree exists. 
\end{definition}
Note that this definition is independent of the choice of representation of
$r$ because of the properties of the minimal polynomial of $r$. We obtain a
decomposition $R = \oplus_d R_d$ by homogeneous degree.
We extend this grading to the fraction field of $R$ in the usual way. In
particular, a quotient of the form $P\,/\,Q$ of homogeneous polynomials $P$
and $Q$ of degree \,$\deg(P)$, respectively \,$\deg(Q)$, is said to have
homogeneous degree \,$\deg(P) - \deg(Q)$.\medskip

Let $\delta = a d - b^2/4$, $\eta = a e - b c/2$, and
\begin{equation}\label{eq:disc}
  \Deltab = a d f  - a e^2/4 - b^2 f/4 + b c e/4 - c^2 d/4
\end{equation}
so that $\Deltab$ is the discriminant of $\Qb$. To obtain our normalization,
we proceed as follows:
\begin{enumerate}
\item eliminate the monomials $x_2 x_1$ and $x_2 x_3$ of $\Qb$ by applying
  the transformation
  \begin{equation}
    x_2 \leftarrow x_2 - \frac{b}{2a} x_1 -\frac{c}{2 a} x_3 .
  \end{equation}
  We call the result $Q_1$.
\item eliminate the monomial $x_1 x_3$ in $Q_1$ by applying
  \begin{equation}
    x_1 \leftarrow x_1 - \frac{\eta}{2 \delta} x_3 .
  \end{equation}
  We call the result $Q_2$.
\item Let $r$ be such that $r^2 + a\, \Deltab = 0$ (so that $r$ has homogeneous
  degree $2$). Transform $Q_2$ into $Q_3 = a x_2^2 + (\delta/a) x_1 x_3$ by
  applying
  \begin{equation}
    x_1 \leftarrow \frac{x_1 + x_3}{2},
    \quad
    x_3 \leftarrow \frac{\delta}{2 r} \left( x_1 - x_3 \right).
  \end{equation}
\item Transform $Q_3$ into normalized form by applying
  \begin{equation}
    x_1 \leftarrow - \frac{\Deltab}{a^2} x_1 .
  \end{equation}
\end{enumerate}

Composing all these maps, we obtain that $T_0 \,.\, \Qb$ is a multiple of
$x_2^2 - x_1 x_3$, where
\begin{equation}\label{eq:T0}
  T_0 =
  \begin{pmatrix}
    \displaystyle-\frac{\delta}{a^2} &  0 & \displaystyle-\frac{2 r+\eta}{2 a^2} \\[0.3cm]
    \displaystyle \frac{b}{2a}  & 1  & \displaystyle\frac{c}{2a} \\[0.3cm]
    1 & 0 & \displaystyle-\frac{r-\eta}{\delta}
  \end{pmatrix}.
\end{equation}
When applying this to a contravariant, as we will indeed do later, we instead
have to transform by applying $T_0^{-1}$ on the right.

For later use, we also define the integral matrix
\begin{equation}\label{eq:Tint}
  T_{\intt} = a^3 \delta^2 T_0\,.
\end{equation}
One computes
\begin{equation}\label{eq:elem}
  \begin{aligned}
    \det(T_{\intt})^2
    & = & (a^9 \delta^6)^2 \det (T_0)^2\,, \\
    & = & (a^9 \delta^6)^2 \cdot {2^2 r^2}/{a^4}\,, \\
    & = & - 2^2 a^{15} \delta^{12} \Deltab\,.
  \end{aligned}
\end{equation}

Note how the matrices constructed in this section depend on the choice of a
square root. Because of this, they do not immediately give rise to morphisms of
varieties. However, we will see later (\cf\ \infra, Theorem~\ref{thm:main})
that they can be used to construct functions that are honest morphisms, and
that even admit expressions in the Dixmier--Ohno invariants.

\begin{remark}
  It is in fact possible to get integral matrices $T$ of smaller homogeneous
  weight. The best method known to us multiplies $T_0$ by a matrix over a
  larger extension whose determinant is homogeneous of degree $6$ in the
  coefficients. We do not need this more effective integral version in what
  follows.
\end{remark}

\subsection{Extension from $\Zs$ to $\Ys$}

In this section we describe how Proposition~\ref{prop:rationality} can be
extended from $\Zs$ to all of $\Ys$. Let $T$ be a universal transformation (as
defined in the previous section) that sends $\rhob$ to a non-zero multiple of
$v_2^2 - v_1 v_3$. With the notation of Proposition~\ref{prop:rationality}, we
can then consider the functions $\jb \ell^* T$ and $\bb_0 \ell^* T$ on $\Ys$.

\begin{proposition}\label{prop:integrality}
  Let $\jb$ be a joint invariant of degree $d$. Then the functions $\jb \ell^*
  T$ and $\bb_0 \ell^* T$ on $\Ys$ satisfy the following properties:
  \begin{enumerate}
  \item the quotient $\jb \ell^* T / (\bb_0 \ell^* T)^d$ is $\GL
    (V)$-invariant and does not depend on the choice of $T$;
  \item for $T = T_0$ the functions $\jb \ell^* T$ and $\bb_0 \ell^* T$ are in
    $K (\Sym^4 (V^*))$;
  \item for $T = T_{\intt}$ the functions $\jb \ell^* T$ and
    $\bb_0 \ell^* T$ are in $K [\Sym^4 (V^*)]$.
  \end{enumerate}
\end{proposition}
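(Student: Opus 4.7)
The plan is to reduce all three statements to the structure of the stabilizer of the universal normalized form $v_2^2-v_1 v_3$ combined with the equivariance of $\ell^*$ from~\eqref{eq:lstar_equiv}.

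For (i), I first establish $T$-independence. Any two universal transformations $T,T'$ normalizing $\rhob$ differ on the right by an element $S$ of the stabilizer of $\Zs$ in $\SL(V)$, which by Lemma~\ref{lem:sec} equals $h^*(\langle\zeta_3\rangle\cdot\SL(W))$. Writing $S=h^*(\zeta_3^k\sigma)$ with $\sigma\in\SL(W)$, equivariance gives $\ell^*(F.T')=\ell^*(F.T).(\zeta_3^k\sigma)$. The $\SL(W)$-factor $\sigma$ acts trivially on the joint invariant $\jb$ and on $\bb_0$; the scalar $\zeta_3 I_3$ acts on $\Sym^4(V^*)$ as multiplication by $\zeta_3^4=\zeta_3$, so by linearity of $\ell^*$ it scales each of $b_8,b_4,b_0$ by $\zeta_3^k$. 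Since $\jb$ has total degree $d$ and $\bb_0$ has total degree $1$, we obtain $\jb\ell^*T'=\zeta_3^{kd}\,\jb\ell^*T$ and $\bb_0\ell^*T'=\zeta_3^{k}\,\bb_0\ell^*T$, whence $T$-independence of the quotient. For $\GL(V)$-invariance, given $g\in\GL(V)$ and a normalizing $T$ for $F$, the transformation $g^{-1}T$ normalizes $\rhob(F.g)$ and satisfies $F.g.(g^{-1}T)=F.T$; the quotient at $F.g$ computed via $g^{-1}T$ coincides with that at $F$, and $T$-independence extends the equality to any other valid choice of normalizer.

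For (ii), the only non-rational ingredient in $T_0$ is the square root $r$ with $r^2=-a\Delta$, so it suffices to prove invariance under the Galois involution $r\mapsto -r$. Let $T_0^{(-)}$ denote the matrix obtained by flipping the sign of $r$. From the explicit construction in Section~\ref{sec:normalization}, the multiplier $u=a$ by which $T_0$ sends $\rhob(F)$ to $u\,(v_2^2-v_1v_3)$ is $r$-free, so $T_0$ and $T_0^{(-)}$ produce the \emph{same} element of $\Sym^2(V)$, not merely the same scalar multiple. Therefore $T_0^{-1}T_0^{(-)}$ lies in $h^*(\SL(W))$ with no $\zeta_3$ contribution, i.e., corresponds to $k=0$ in the analysis of (i). The values $\jb\ell^*T_0$ and $\bb_0\ell^*T_0$ are thus individually unchanged by the Galois involution and hence belong to $K(\Sym^4(V^*))$.

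For (iii), since $T_\intt=a^3\delta^2 T_0$ and the prefactor $a^3\delta^2$ is $r$-free, linearity of the action yields $F.T_\intt=(a^3\delta^2)^4\,F.T_0$ and hence
\[
\jb\ell^*T_\intt=(a^3\delta^2)^{4d}\,\jb\ell^*T_0,\qquad \bb_0\ell^*T_\intt=(a^3\delta^2)^{4}\,\bb_0\ell^*T_0.
\]
The entries of $T_\intt$ being polynomial in the coefficients of $\rhob$ and in $r$, the coefficients of $F.T_\intt$, of $\ell^*(F.T_\intt)$, and of $\jb\ell^*T_\intt$ and $\bb_0\ell^*T_\intt$ all lie in $K[\Sym^4(V^*)][r]$. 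The Galois argument of (ii) applies verbatim to $T_\intt$, so both functions are fixed by $r\mapsto -r$ and therefore lie in $K[\Sym^4(V^*)][r^2]=K[\Sym^4(V^*)]$.

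The delicate point is the claim in (ii) that $T_0^{-1}T_0^{(-)}$ avoids the $\zeta_3$-coset of $h^*(\SL(W))$; this is what upgrades the quotient-level invariance of (i) to the individual rationality in (ii) and integrality in (iii). Verifying it reduces to reading off from the four elementary steps in Section~\ref{sec:normalization} that the normalization multiplier equals $a$ and is therefore independent of the sign of the square root $r$.
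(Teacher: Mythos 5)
There are two genuine gaps, both at exactly the points where your argument diverges from the paper's.

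\textbf{Gap in (i).} You claim that two normalizers $T,T'$ differ by an element $S$ of the stabilizer of $\Zs$ in $\SL(V)$, and then write $S = h^*(\zeta_3^k\sigma)$. This is wrong: the normalizers constructed in Section~\ref{sec:normalization} are general elements of $\GL(V)$, not $\SL(V)$. For example $T_\intt = a^3\delta^2 T_0$ differs from $T_0$ by the scalar $a^3\delta^2$, whose determinant over $V$ is $(a^3\delta^2)^3$ and which is certainly not a cube root of unity. The correct group is $\GO(v_2^2-v_1v_3) = h^*(\GL(W))$, i.e.\ $K^*\cdot h^*(\SL(W))$ for an \emph{arbitrary} scalar. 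Your computation extends immediately to this case --- replace $\zeta_3^k$ by an arbitrary $\lambda\in K^*$; each $b_n$ then picks up a factor $\lambda^4$, and since $\jb$ has total degree $d$ while $\bb_0^d$ has the same degree, the factor $\lambda^{4d}$ cancels --- but this is exactly the ``$\jb/\bb_0^d$ is homogeneous of weight $0$'' observation that your argument omits and that the paper makes explicitly.

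\textbf{Gap in (ii).} Your key claim is that $T_0^{-1}T_0^{(-)}\in h^*(\SL(W)) = \SO(v_2^2-v_1v_3)$ ``with no $\zeta_3$ contribution.'' This is false. Your observation that $u = a$ is $r$-free correctly places $T_0^{-1}T_0^{(-)}$ in $O(v_2^2-v_1v_3)$, but $O = \SO \sqcup (-I_3)\cdot\SO$, and the paper computes $\det(T_0^\sigma) = -\det(T_0)$, so $T_0^{-1}T_0^{(-)}$ has determinant $-1$ and lies in $(-I_3)\cdot\SO$, \emph{not} in $\SO$. Your conclusion (that the Galois involution fixes $\jb\ell^*T_0$ and $\bb_0\ell^*T_0$) is nonetheless correct, but for a reason your proof never states: $-I_3$ is central and acts trivially on quartic forms because $4$ is even, so only the $\SO$-factor matters. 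Without this step the argument breaks, because invariance of $\jb$ and $\bb_0$ under $\SL(W)$ alone does not cover the coset $(-I_3)\cdot\SO$. Once (ii) is repaired, your elementary treatment of (iii) via fixed points of $r\mapsto -r$ in $K[\Sym^4(V^*)][r]$ is a perfectly good replacement for the paper's appeal to integral closure.
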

\begin{proof}
  (i): Any two possible $T$ differ by an element of $\GO (v_2^2 - v_1 v_3)$,
  the image of $\GL (W)$ under $h^*$. The result follows because $\jb$ and
  $\bb_0$ are invariant under $\SL (W)$ and $\jb / \bb_0^d$ is homogeneous of
  weight $0$.

  (ii): The matrix $T_0$ is a priori defined over a quadratic extension of $K
  [\Sym^4 (V^*)]$, as are therefore the functions $\jb \ell^* T$ and $\bb_0
  \ell^* T$. By Galois theory it suffices to show that applying its quadratic
  conjugate $T_0^{\sigma}$ to the universal ternary quartic yields the same
  result. One calculates first of all that $\det (T_0^{\sigma}) = -\det (T_0)$.
  This means that $T_0$ and $T_0^{\sigma}$ differ by the product of an element
  of $\SO (v_2^2 - v_1 v_3)$ and the scalar matrix $-1$. Now $-1$ is central
  and its action on quartic forms is trivial. The conclusion therefore follows
  from the fact that $\jb$ and $\bb_0$ are invariant under $\SL (W)$.

  (iii): This follows by combining the argument in the proof of (ii) with the
  fact that the polynomial ring $K [\Sym^4 (V^*)]$ is integrally closed in its
  field of fractions $K [\Sym^4 (V^*)]$.
\end{proof}

\begin{remark}
  Note that in general $\jb \ell^* T$ and $\bb_0 \ell^* T$ contain superfluous
  factors and are far from being $\SL (V)$-invariant themselves.
\end{remark}

\subsection{A fundamental lemma}

Let $\Fb$ be the universal ternary quartic, and let $\rhob$ be the
contravariant defined in~\eqref{eq:rho}. As in the previous section, let $T$ be
an algebraic transformation matrix that sends $\rhob$ to the multiple $\ub\cdot
(v_2^2 - v_1 v_3)$, or more precisely such that we have
\begin{equation}\label{eq:rhoT}
  \rhob (\av, \vv . T) = \ub \cdot (v_2^2 - v_1 v_3)\,.
\end{equation}
We can then define $\tilde{\Fb} = \Fb . T = \Fb (\av, \xv . T) = \Fb (T . \av,
\xv)$. Since the
covariant $\rhob$ is homogeneous of weight equals $(4 \cdot 4 + 2)/3 = 6$,
Eq.~\eqref{eq:weight_contra} shows that
\begin{equation}
  \tilde{\rhob}
  :=
  \rhob (\tilde{\Fb})
  =
  \rhob (T . \av, \vv) 
  =
  \det(T)^6 \cdot \rhob (\av, \vv . T)
  =
  \det(T)^6 \,\cdot\, \ub \,\cdot\, (v_2^2 - v_1 v_3)\,.
\end{equation}
Using the definitions of the operators in Section~\ref{sec:dixmier}, we see that
\begin{equation}\label{eq:I9tilde}
  \begin{split}
    \tilde{\Tb} := \Tb(\tilde{\Fb}, v)
    =
    {D(\tilde{\rhob}, \tilde{\Fb})}/{12}
    =
    \det(T)^6 \cdot {D(\rhob (\av, \vv . T), \tilde{\Fb})}/{12}\,,
    \\
    \tilde{\Ib}_9 := \Ib_9(\tilde{\Fb})
    =
    J_{11}(\tilde{\Tb},\tilde{\rhob})
    =
    \det(T)^6 \cdot J_{11}(\tilde{\Tb}, \rhob (\av, \vv . T)) .
  \end{split}
\end{equation}

By using the particular shape of $\rhob (\av, \vv . T)$, we calculate that in
terms of the coefficients $\tilde{a}$ of $\tilde{\Fb}$ we have that
\begin{equation}
  \tilde{\Ib}_9
  =
  \det(T)^{12} \cdot \ub^2 \cdot
  \left( \frac{\tilde{a}_{202}}{6} - \frac{\tilde{a}_{121}}{6} +
    \tilde{a}_{040} \right) .
\end{equation}
Since $\Ib_9$ is of weight $9 \cdot 4/3 = 12$,~\eqref{eq:weight_contra} also
gives that
\begin{equation}
  \Ib_9
  =
  \det (T)^{-12} \cdot \tilde{\Ib}_9
  = \ub^2 \cdot \left( \frac{\tilde{a}_{202}}{6} - \frac{\tilde{a}_{121}}{6} +
    \tilde{a}_{040} \right) .
\end{equation}

On the other hand, using the equivariant isomorphism $\ell^*$ from
\eqref{eq:equivariant} we see that in terms of the coefficients $\tilde{\av}$
of $\tilde{F}$, the constant binary form $\bb_0 \ell^* T$ is given by
\begin{equation}
  \bb_0 \ell^* T
  =
  \frac{\tilde{a}_{202}}{30} - \frac{\tilde{a}_{121}}{30} + \frac{\tilde{a}_{040}}{5}\,.
\end{equation}
Thus we obtain the algebraic equality
\begin{equation} \label{eq:I9b0}
  \Ib_9 = 5 \cdot \ub^2 \cdot (\bb_0 \ell^* T)\,.
\end{equation}

Finally, from Section~\ref{sec:dixmier} we know that the discriminant of
$\rhob$ equals $\Ib_{12}$ whereas the discriminant of $\rhob (\av, \vv . T)$
equals $- \ub^3 / 4$ by~\eqref{eq:rhoT}. Since $\rhob (\av, \vv . T) = \rhob
(\av, T^{-1} \,.\, \vv)$, we see that the matrix $[ \rhob (\av, \vv . T) ]$
that represents the symmetric bilinear form corresponding to the quadric contravariant
$\rhob (\av, \vv . T)$ can be obtained from the matrix $[ \rhob ]$ that
represents $\rho$ as $T^{-1} [ \rhob ] (T^{-1})^{t}$. Taking the discriminant comes
down to taking the determinant of these forms, and therefore
\begin{align}\label{eq:ub3}
  \begin{split}
    - \ub^3 /4
    & = \disc (\rhob (\av, \vv . T))
    = \det ([ \rhob (\av, \vv . T)])
    = \det (T^{-1} . [ \rhob (\av, \vv) ] . (T^{-1})^{t}) \\
    & = \det (T^{-1})^2 \det ([ \rhob (\av, \vv) ])
    = \det (T^{-1})^2 \disc (\rhob (\av, \vv)) \\
    & = \Ib_{12} / \det (T)^2 .
  \end{split}
\end{align}
Since on $\Zs$ we can moreover take $T$ to be the identity,~\eqref{eq:I9b0} and
\eqref{eq:ub3} translate into the following result, which will be fundamental
to our study of the integrality of the functions $\jb \ell^* T / (\bb_0 \ell^*
T)^d$.

\begin{lemma}\label{lem:I9b0}
  We have the equalities
  \begin{equation}
    \Ib_9
    =
    5 \cdot \ub^2 \cdot (\bb_0 \ell^*)
  \end{equation}
  of functions on $\Zs$ and
  \begin{equation}
    \left(\frac{\Ib_9}{\bb_0 \ell^* T}\right)^3
    =
    2000\cdot\left(\frac{\Ib_{12}}{\det(T)^2}\right)^2
  \end{equation}
  of functions on $\Ys$.
\end{lemma}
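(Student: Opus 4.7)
The plan is to reduce both equalities to explicit coefficient computations on a normalized quartic, exploiting the transformation $T$ from Section~\ref{sec:normalization} which forces $\rhob(\av, \vv . T) = \ub \cdot (v_2^2 - v_1 v_3)$. The whole argument is in essence a careful bookkeeping of weights and coefficients once this normalization has been made.

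For the first equality I would work with $\tilde{\Fb} = \Fb . T$ and its coefficients $\tilde{a}_I$. The key observation is that once $\rhob$ is proportional to the very simple quadric $v_2^2 - v_1 v_3$, its Hessian matrix has a sparse shape, so the chain of differential operators that build $\tilde{\Ib}_9 = J_{11}(\tilde{\Tb},\tilde{\rhob})$ from Section~\ref{sec:dixmier} collapses to a handful of terms involving only the coefficients $\tilde{a}_{202}, \tilde{a}_{121}, \tilde{a}_{040}$ of $\tilde{\Fb}$, multiplied by a power of $\ub$ supplied by~\eqref{eq:weight_contra}. I would then translate back to $\Ib_9(F)$ using that $\Ib_9$ has weight $12$, obtaining $\Ib_9 = \ub^2 (\tilde{a}_{202}/6 - \tilde{a}_{121}/6 + \tilde{a}_{040})$. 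Independently, the same three coefficients enter $\bb_0 \ell^* T$ through the column of the matrix $\ell^*$ in~\eqref{eq:lstar_matrix} indexed by $1$: the relevant entries are $1/30, -1/30, 1/5$. Matching the two expressions yields the factor $5$, and on $\Zs$ one may take $T$ to be the identity.

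For the second equality I would cube the first to get $\Ib_9^3 = 125\, \ub^6 (\bb_0 \ell^* T)^3$ and substitute the value of $\ub^3$ obtained from discriminants. Since $\disc(\rhob) = \Ib_{12}$ and $\disc(u(v_2^2 - v_1 v_3)) = -\ub^3/4$, and since the matrix of $\rhob(\av, \vv.T)$ is $T^{-1} [\rhob] (T^{-1})^t$, one reads off $-\ub^3/4 = \Ib_{12}/\det(T)^2$, whence $\ub^6 = 16\,(\Ib_{12}/\det(T)^2)^2$. Plugging in gives the stated identity with prefactor $125 \cdot 16 = 2000$.

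The main obstacle is the explicit coefficient computation of $J_{11}(\tilde{\Tb}, \tilde{\rhob})$ when $\tilde{\rhob}$ is proportional to $v_2^2 - v_1 v_3$: one must unwind the cascade of differential operators defining $\Tb$ and $J_{11}$ and verify that only the three coefficients $\tilde{a}_{202}, \tilde{a}_{121}, \tilde{a}_{040}$ survive, with precisely the ratios $1/6, -1/6, 1$. This is what makes the comparison with $\bb_0 \ell^* T$ work cleanly and is the only step that is not purely formal; the weight rules, the discriminant identity, and the final cubing are all routine.
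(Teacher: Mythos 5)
Your proposal matches the paper's proof essentially step for step: normalize via the transformation $T$ so that $\rhob(\av, \vv.T) = \ub\,(v_2^2-v_1v_3)$, track the weight-$12$ covariance of $\Ib_9$ to extract the factor $\ub^2$ and the three surviving coefficients $\tilde a_{202}, \tilde a_{121}, \tilde a_{040}$ in ratios $\tfrac16, -\tfrac16, 1$, read off the $\Sym^0$-component of $\ell^*$ from the last column of \eqref{eq:lstar_matrix} to get $\bb_0\ell^*T = \tilde a_{202}/30 - \tilde a_{121}/30 + \tilde a_{040}/5$, match constants to obtain the factor $5$, and then cube together with the discriminant identity $-\ub^3/4 = \Ib_{12}/\det(T)^2$ (from $[\rhob(\av,\vv.T)] = T^{-1}[\rhob](T^{-1})^t$) to get $125\cdot 16 = 2000$. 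This is the same route as the paper.
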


\subsection{Denominator bounds}
\label{sec:denom}

It turns out that the results of the previous sections give us a way to bound
the denominators of the functions $\jb \ell^*$ on $\Zs$ from
Proposition~\ref{prop:rationality}. Moreover, we can describe the functions
$\jb \ell^* T / (\bb_0 \ell^* T)^d$ on $\Ys$ as well. As a first step, we prove
two lemmata.

\begin{lemma}\label{lem:ufd}
  We have the following.
  \begin{enumerate}
  \item Let $S = P / Q$ be a simplified fraction in $K (\Sym^4 (V^*))$. If $S$
    is invariant under $\SL (V)$, then so are the elements $P$ and $Q$ of $K
    [\Sym^4 (V^*)]$.
  \item Let $P \in K [\Sym^4 (V^*)]$. If $P$ is invariant under $\SL (V)$,
    then so are all its irreducible factors in the ring $K [\Sym^4 (V^*)]$.
  \item The ring $K [\Sym^4 (V^*)]^{\SL (V)}$ is a unique factorization
    domain.
  \end{enumerate}
\end{lemma}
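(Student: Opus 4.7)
The three parts are closely related and can all be derived from two standard facts: that $K[\Sym^4(V^*)]$ is a polynomial ring (hence a UFD), and that the group $\SL(V)$ admits no non-trivial characters.

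For (i), the plan is to write $S = P/Q$ in lowest terms, so $P$ and $Q$ are coprime in the UFD $K[\Sym^4(V^*)]$. For any $T \in \SL(V)$, the invariance of $S$ gives $P \cdot (Q . T) = Q \cdot (P . T)$, and coprimality then forces $Q \mid (Q . T)$ and $P \mid (P . T)$. Since the action is homogeneous and preserves degrees, there exist scalars $\lambda(T), \mu(T) \in K^*$ with $P . T = \lambda(T) P$ and $Q . T = \mu(T) Q$, and $P(Q.T) = Q(P.T)$ forces $\lambda = \mu$. The assignment $T \mapsto \lambda(T)$ is then a character of $\SL(V)$; since this group has no non-trivial characters, $\lambda \equiv 1$, and both $P$ and $Q$ are invariant.

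For (ii), let $P = \prod P_i^{e_i}$ be the factorization of $P$ into irreducibles in $K[\Sym^4(V^*)]$. For any $T \in \SL(V)$, the equation $P . T = P$ together with unique factorization shows that $T$ permutes the factors $P_i$ up to scalars. Since $\SL(V)$ is connected and this permutation action is via a continuous map to a discrete symmetric group, the permutation must be trivial. Hence each $P_i$ satisfies $P_i . T = \chi_i(T) P_i$ for some character $\chi_i$, and the same argument as in (i) gives $\chi_i \equiv 1$.

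For (iii), part (ii) shows that every irreducible factorization of an element of $R := K[\Sym^4(V^*)]^{\SL(V)}$ inside $K[\Sym^4(V^*)]$ already lives in $R$. The factors $P_i$ remain irreducible in $R$: any non-trivial factorization $P_i = A B$ with $A,B \in R$ is a non-trivial factorization in the polynomial ring, so one of $A, B$ must be a unit there, i.e.\ a non-zero constant, hence a unit in $R$. Uniqueness of factorization in $R$ then follows from uniqueness in the ambient UFD $K[\Sym^4(V^*)]$. The only mildly delicate point in the whole argument is the connectedness step in (ii), which guarantees that the a priori permutation of the irreducible factors is trivial; everything else is a routine application of the character-free property of $\SL(V)$ together with UFD manipulations.
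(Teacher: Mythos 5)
Your argument is correct and follows the same essential route as the paper: both parts (i) and (ii) reduce to the two facts that $\SL(V)$ admits no non-trivial characters and is connected, and both deduce (iii) from (ii) via the ambient UFD $K[\Sym^4(V^*)]$. The only difference is cosmetic, in (ii): the paper argues via the Lie-subgroup/Lie-subalgebra correspondence, whereas you observe directly that the induced map from the connected group $\SL(V)$ to the finite group permuting the irreducible factors is constant, which is a cleaner formulation of the same idea and would only benefit from a one-line remark that each stabilizer is Zariski-closed (so that the map is indeed locally constant).
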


\begin{proof}
  (i) (See also~\cite[Exercise~6.10]{dolgainv}.) Because the fraction $S$ is
  simplified, the divisors defined by $P$ and $Q$ are invariant under $\SL
  (V)$. Therefore the action of $\SL (V)$ is described by multiplication with
  a character. But the group $\SL (V)$ has no non-trivial character.

  (ii) Consider the locus $P = 0$ in the affine space over $\Sym^4 (V)$. This
  locus consists of a finite number of irreducible components. The
  corresponding stabilizers are subgroups of $\SL (V)$ of finite index. These
  subgroups give rise to the full tangent space at the origin. By the classical
  correspondence between Lie subgroups and Lie subalgebras, they therefore
  correspond to a connected Lie subgroup of $\SL (V)$ of the same dimension as
  the latter group. But since $\SL (V)$ is irreducible, it is certainly
  connected, so we see that they all coincide with $\SL (V)$. This shows that
  the irreducible components of the locus $P = 0$ are invariant under $\SL
  (V)$.

  The irreducible factors of $P$ generate the radical ideals of the
  corresponding irreducible components, and are characterized up to a non-zero
  scalar by this property. Since the action of $\SL (V)$ preserves degree, we
  see that in fact these factors transform by a character under the action of
  $\SL (V)$. We can therefore again conclude by using the fact that $\SL (V)$
  has no non-trivial character.

  Part (iii) is a consequence of part (ii).
\end{proof}

\begin{remark}
  The fact that $K [\Sym^4 (V^*)]^{\SL (V)}$ is a unique factorization domain
  was known to Hilbert~\cite[II.3]{hilbert-inv}. General results in this
  direction (proved by different methods than those used in
  Lemma~\ref{lem:ufd}) were also obtained by Popov~\cite{popov-ufd} and
  Hashimoto~\cite{hashimoto-ufd}.
\end{remark}

\begin{lemma}\label{lem:I12}
  The element $\Ib_{12}$ of $K [\Sym^4 (V^*)]$ is irreducible.
\end{lemma}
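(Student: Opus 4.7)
By Lemma~\ref{lem:ufd}(ii), any nontrivial factorization $\Ib_{12} = P \cdot Q$ in $K[\Sym^4(V^*)]$ forces both $P$ and $Q$ to be $\SL(V)$-invariants. Since every Dixmier--Ohno generator listed in~\eqref{eq:dixmier} and~\eqref{eq:ohno} has degree in $3\Z_{>0}$, the degrees of $P$ and $Q$ must both be positive multiples of $3$ summing to $12$, leaving only the two patterns $(\deg P,\deg Q) \in \{(6,6),(3,9)\}$.

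The $(6,6)$ case can be handled purely structurally. Since $\Ib_3$ and $\Ib_6$ are the only Dixmier--Ohno generators of degree at most $6$, the degree-$6$ component of the invariant ring is the $2$-dimensional space $\langle \Ib_3^2,\Ib_6\rangle_K$. Hence both $P$ and $Q$ would lie in this span, and $\Ib_{12} = PQ$ would be a polynomial in $\Ib_3$ and $\Ib_6$, contradicting the algebraic independence of the Dixmier invariants $\Ib_3,\Ib_6,\Ib_{12}$ recorded in~\eqref{eq:dixmier}. For the $(3,9)$ case, $P$ must be a scalar multiple of $\Ib_3$, so $\Ib_3$ would have to divide $\Ib_{12}$ in $K[\Sym^4(V^*)]$; to rule this out, I would exhibit a single ternary quartic $F_0$ satisfying $\Ib_3(F_0) = 0$ and $\Ib_{12}(F_0) \neq 0$, at which point divisibility fails.

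The main obstacle is the explicit construction of the witness $F_0$. Algebraic independence of the Dixmier invariants alone is not enough here, since a hypothetical factorization of the form $\Ib_{12} = \Ib_3\bigl(a\Ib_3^3 + b\Ib_3\Ib_6 + c\Ib_9 + d\Jb_9\bigr)$ with $d\neq 0$ involves the Ohno invariant $\Jb_9$ and hence does not directly contradict~\eqref{eq:dixmier}. Producing $F_0$ is, however, a finite verification, and can be carried out either by choosing a quartic with enough discrete symmetry to annihilate $\Ib_3$ or by a direct search using the implementation at~\cite{LRS16-Code}.
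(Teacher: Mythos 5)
Your proposal is correct and follows the paper's route: reduce via Lemma~\ref{lem:ufd} to a factorization into $\SL(V)$-invariants, then show that $\Ib_{12}$ is not a product of lower-degree Dixmier--Ohno invariants. Where the paper simply asserts that this can be verified by linear algebra and interpolation, you refine the check into a degree case analysis: the $(6,6)$ case is dispatched structurally by the algebraic independence of $\Ib_3$, $\Ib_6$, $\Ib_{12}$, and the $(3,9)$ case --- where, as you correctly note, the Ohno invariant $\Jb_9$ prevents a pure algebraic-independence argument --- reduces to producing a single quartic $F_0$ with $\Ib_3(F_0) = 0$ and $\Ib_{12}(F_0) \neq 0$. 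Leaving $F_0$ unexhibited is no more of a gap than the paper's own unspelled verification, and is a trivial random search; overall your reduction is tighter and more transparent than the paper's blanket interpolation claim.
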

\begin{proof}
  By Lemma~\ref{lem:ufd}\,-\,(i), it suffices to show irreducibility of
  $\Ib_{12}$ in the ring of Dixmier--Ohno invariants $K [\Sym^4 (V^*)]^{\SL
    (V)}$, and one can indeed verify by linear algebra and interpolation (\cf\
  Section~\ref{sec:interpolation}) that no expression of $\Ib_{12}$ in terms of
  invariants of lower homogeneous degree exists.
\end{proof}

Now we can state the following stronger version of
Proposition~\ref{prop:rationality}.

\begin{theorem}\label{thm:main}
  Let $\jb$ be a joint invariant of total degree $d$. With the notation of
  Section~\ref{sec:GHsec}, on $\Zs$ we have
  \begin{equation}\label{eq:mainaff}
    \jb \ell^*
    =
    \frac{\Pb}{\ub^{2 d}} ,
  \end{equation}
  where $\Pb$ is a polynomial in the Dixmier--Ohno invariants that is
  homogeneous of degree $9 d$. Moreover, we have an equality
  \begin{equation}\label{eq:mainproj}
    \frac{\jb \ell^* T}{(\bb_0 \ell^* T)^d}
    =
    \frac{\Pb}{\Ib_9^d}
  \end{equation}
  of functions on $\Ys$.
\end{theorem}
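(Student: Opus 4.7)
The plan is to define the polynomial $\Pb$ via the right-hand side of the second identity and then verify both that it lies in the Dixmier--Ohno ring with the asserted degree and that it satisfies the first identity on $\Zs$. Concretely, set
\[
\Pb := \Ib_9^d \cdot \frac{\jb\ell^*T}{(\bb_0\ell^*T)^d}.
\]
By Proposition~\ref{prop:integrality}(i), the ratio $S := \jb\ell^*T/(\bb_0\ell^*T)^d$ is $\GL(V)$-invariant and independent of the choice of $T$, so $S \in K(\Sym^4(V^*))^{\SL(V)}$. Consequently $\Pb$ is a rational $\SL(V)$-invariant, homogeneous of degree $9d$ in the coefficients of $\Fb$, and the second identity of the theorem is immediate from the definition. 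For the first, restriction to $\Zs$ with $T = I$ together with Lemma~\ref{lem:I9b0} (which gives $\bb_0\ell^* = \Ib_9/(5\ub^2)$ on $\Zs$) yields $\jb\ell^* = \Pb/(5^d\ub^{2d})$, and the constant $5^d$ is absorbed into a final renormalization of $\Pb$.

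The core of the proof is therefore to show $\Pb$ is a polynomial. I would first compute $\Pb^3$ using the second part of Lemma~\ref{lem:I9b0}:
\[
\Pb^3 = 2000^d \cdot \Ib_{12}^{2d} \cdot \frac{(\jb\ell^*T)^3}{\det(T)^{4d}}.
\]
Specializing to $T = T_\intt$ and using $\det(T_\intt)^2 = -4\,a^{15}\delta^{12}\,\Ib_{12}$ from~\eqref{eq:elem} so that the $\Ib_{12}^{2d}$ factors cancel, one obtains the explicit expression
\[
\Pb^3 = 125^d \cdot \frac{(\jb\ell^*T_\intt)^3}{a^{30d}\,\delta^{24d}}.
\]
By Proposition~\ref{prop:integrality}(iii), $\jb\ell^*T_\intt$ is a polynomial in $K[\Sym^4(V^*)]$, so $\Pb^3$ is a rational function whose potential poles are all contained in the hypersurface union $\{a = 0\} \cup \{\delta = 0\}$.

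To promote this to the polynomial statement, I would write $\Pb^3 = N/D$ in lowest terms in the UFD $K[\Sym^4(V^*)]^{\SL(V)}$ (Lemma~\ref{lem:ufd}(iii)). By Lemma~\ref{lem:ufd}(ii), $N$ and $D$ remain coprime in the larger ring $K[\Sym^4(V^*)]$, so $D$ must divide $a^{30d}\delta^{24d}$ there; equivalently, $\{D = 0\}$ is an $\SL(V)$-invariant closed subset of $\{a = 0\}\cup\{\delta = 0\}$. But neither $a$ nor $\delta$ is $\SL(V)$-invariant, and the $\SL(V)$-orbit of a generic point of either component escapes the union, so no $\SL(V)$-invariant hypersurface can be supported there. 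Thus $\{D = 0\}$ has codimension at least two, forcing $D$ to be a nonzero constant. Hence $\Pb^3 \in K[\Sym^4(V^*)]^{\SL(V)}$, and writing $\Pb = N'/D'$ in lowest terms shows that $D'^3$ must also be a constant, so $\Pb$ itself lies in the Dixmier--Ohno polynomial ring. The main obstacle I anticipate is this final orbit-theoretic step, namely rigorously ruling out any $\SL(V)$-invariant hypersurface being supported on the non-invariant loci $\{a = 0\}$ and $\{\delta = 0\}$; I would handle it either by the generic-orbit escape argument sketched above or, if needed, by producing an explicit smooth quartic $F$ on each known invariant hypersurface for which $a(F)$ and $\delta(F)$ are both nonzero.
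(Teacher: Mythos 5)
Your proposal closely parallels the paper's second (``projective approach'') proof through the derivation of the key identity $\Pb^3 = 125^d\,(\jb\ell^*T_{\intt})^3/(a^{30d}\delta^{24d})$, which is exactly what appears there (combining Lemma~\ref{lem:I9b0}, \eqref{eq:elem}, and Proposition~\ref{prop:integrality}). The divergence — and the gap — is in the final step. You write $\Pb^3 = N/D$ in lowest terms, correctly observe $D\mid a^{30d}\delta^{24d}$ with $D$ a nonconstant $\SL(V)$-invariant if $\Pb^3$ is not polynomial, and then assert that ``the $\SL(V)$-orbit of a generic point of either component escapes the union, so no $\SL(V)$-invariant hypersurface can be supported there.'' This is not a proof. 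Note that Lemma~\ref{lem:ufd}(ii) concerns factorization of \emph{invariant} polynomials; it says nothing a priori about whether an irreducible factor of the non-invariant $a$ (degree $4$) or $\delta$ (degree $8$) could happen to be $\SL(V)$-invariant. What you actually need to show is that no irreducible $\SL(V)$-invariant divides $a$ or $\delta$. Using the $\mu_3$-center of $\SL(V)$, every such invariant has degree divisible by $3$, so the candidates are a degree-$3$ invariant (necessarily proportional to $\Ib_3$) dividing $a$ or $\delta$, and a degree-$6$ invariant dividing $\delta$; ruling these out is a finite but non-trivial verification (e.g.\ showing $a,\delta$ are irreducible, which their degrees $4$ and $8$ then contradict). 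Your fall-back of ``producing an explicit smooth quartic on each known invariant hypersurface'' does not pin this down: the objects to rule out are components of $\{a\delta=0\}$, and these are not a pre-specified list of invariant loci.

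The paper avoids this difficulty entirely. Both of its proofs first bound the polar set via Proposition~\ref{prop:rationality} (resp.\ Lemma~\ref{lem:sandt}) inside $\{\Ib_{12}=0\}$ (resp.\ $\{\Ib_{12}\Ib_9=0\}$), and then observe that $\Ib_{12}$ and $\Ib_9$ are irreducible of degree $12$ and $9$, so they are coprime to $a$ and $\delta$ automatically by degree count; the single computational input required is $\Ib_{12}\nmid \bb_0\ell^*T_{\intt}$. If you want to retain your direct definition of $\Pb$, the cleanest repair is to feed Proposition~\ref{prop:rationality} back in: since $\Pb^3 = 2000^d\,\Ib_{12}^{2d}\,(\jb\ell^*)^3$, its polar set is contained in $\{\Ib_{12}=0\}$; intersecting with your bound $\{a\delta = 0\}$ and using $\Ib_{12}\nmid a,\delta$ (Lemma~\ref{lem:I12} plus degree) gives codimension $\geq 2$, and the orbit-escape heuristic is no longer needed. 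At that point your argument essentially becomes the paper's first proof.
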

\begin{proof}
  Consider $(\jb \ell^*)^3$ as a function on $\Ys$. Then by
  Proposition~\ref{prop:rationality} we have $(\jb \ell^*)^3 = \Pb^3 /
  \Ib_{12}^n$ for some $n$ and hence it is invariant by $\SL(V)$ 
  Now the
  expression $(\jb\ell^*)^3$ is of homogeneous degree $3 d$ in the
  coefficients, it is homogeneous of weight $4 d$. Therefore by
  Lemma~\ref{lem:I9b0} we have
  \begin{equation}\label{eq:quotellT}
    (\jb \ell^* T)^3
    = \det (T)^{4 d} (\jb \ell)^3
    = \det (T)^{4 d} \frac{\Pb}{\Ib_{12}^n}
    = \frac{\Pb}{((\Ib_9 / \bb_0 \ell^* T)^{3 d} \Ib_{12}^{n - 2d})} .
  \end{equation}
  We already know $\jb \ell^* T / (\bb_0 \ell^* T)^d$ to be rational from
  Proposition~\ref{prop:integrality}\,-\,(i). Now we take $T = T_{\intt}$, so that
  both $\jb \ell^* T_{\intt}$ and $\bb_0 \ell^* T_{\intt}$ are polynomials. By
  Lemma~\ref{lem:ufd}, we can then rewrite~\eqref{eq:quotellT} as a polynomial
  equality in the unique factorization domain $K [\Sym^4(V^*)]$, where it
  becomes
  \begin{equation}
    \Ib_9^{3d} \cdot (\jb \ell^* T_{\intt})^3 \cdot \Ib_{12}^{n-2d}
    =
    \Pb \cdot (\bb_0 \ell^* T_{\intt})^{3 d}.
  \end{equation}
  By Lemma~\ref{lem:I12}, if we show that $\Ib_{12}$ does not actually divide
  the polynomial $\bb_0 \ell^* T$, then we get that $n \leq 2d$. One verifies
  this by finding a single plane quartic $F$ for which $\bb_0 \ell^* T_{\intt}
  (F) \neq 0$ while $\Ib_{12} (F) = 0$, which can be done by using a computer
  algebra system. This proves that one has
  \begin{equation}
    \frac{\jb \ell^* T}{(\bb_0 \ell^* T)^d}
    =
    \frac{\Pb \cdot\Ib_{12}^{s}}{\Ib_9^d}
  \end{equation}
  with $s$ non-negative. This yields the second statement of the theorem on
  $\Ys$. Transforming back, one obtains the first statement on $\Zs$.
\end{proof}

\begin{remark}
  As we see, the functions $\jb \ell^*$ themselves do not admit a rational
  expression in the Dixmier--Ohno invariants in $\Zs$ and therefore do not
  extend to an invariant function on $\Ys$. The group-theoretic reason for
  this is that under the action of $\SL (V)$ the normalization of $\rhob$ is
  only determined up to a factor $\zeta_3$, making the function $\ub$
  ill-defined on $\Ys$; only $\ub^3$ extends to a function on $\Ys$, and with
  it $(\jb \ell^*)^3$. On $\Zs$, where $\ub$ can be immediately recovered from
  the quartic by taking its covariant, this problem does not occur by
  construction.
\end{remark}

\begin{remark}
  The functions $\jb \ell^* T / (\bb_0 \ell^* T)^d$ also extend to $\Ys$
  because the characters of the numerator and the denominator under the action
  of $\langle \zeta_3 \rangle$ cancel. Note that for our reconstruction
  purposes we can use either of these formulas; the mutual quotients of the
  joint invariants suffice because geometrically we are only interested in
  quartics up to a scalar.
\end{remark}

\subsection{A projective approach}

It is possible to obtain the second result of Theorem~\ref{thm:main} without
intervention of the group $\langle \zeta_3 \rangle$, at least if one is willing
to work with projective spaces.

In order to achieve this, we use the inclusion $\P \Zs \subset \P \Ys$ instead,
which is a $(G, H)$-section for the group morphism $h^* : \PSL (W) \to \PSL
(V)$ with image $\SO(V)$. Using the equivariant isomorphism
\eqref{eq:equivariant} and the first part of Proposition~\ref{prop:section}, we
see that $(\jb \ell^*)/(\bb_0 \ell^*)^d = \Pb/\Qb \in K(\P \Ys)^G$, where $\Pb$
and $\Qb$ are polynomials in the Dixmier--Ohno invariants that are homogeneous
of the same degree.

We now shrink $\P \Ys$ to $\P' \Ys = \P \Sym^4(V^*) \setminus (\{\Ib_{12}=0\}
\cup \{\Ib_9=0\})$ and define $\P' \Zs$ correspondingly. By
Lemma~\ref{lem:I9b0} the function $\bb_0 \ell^*$ is non-zero on the
representatives of elements of $\P' \Zs$. Since $\P' \Ys$ is normal and affine
we can use the second part of Proposition~\ref{prop:section} to prove the
following lemma.
\begin{lemma}\label{lem:sandt}
  There exist $s, t \in \N$ and a polynomial $\Pb$ in the Dixmier--Ohno that is
  homogeneous of degree $12 s + 9 t$ such that
  \begin{equation}
    \frac{\jb \ell^*}{(\bb_0 \ell^*)^d} = \frac{\Pb}{\Ib_{12}^s \cdot \Ib_9^t} .
  \end{equation}
  on $\P' \Zs$.
\end{lemma}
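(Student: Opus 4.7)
The plan is to apply the second part of Proposition~\ref{prop:section} to the $(G,H)$-section $\P'\Zs \subset \P'\Ys$ with $G = \PSL(V)$ and $H = \PSL(W)$ (acting through $h^*$), and then to invoke the standard localization description of the coordinate ring of the affine open $\P'\Ys \subset \P\Sym^4(V^*)$. As a first step I would verify the three additional hypotheses of Proposition~\ref{prop:section}\,(b) in this setting. The variety $\P'\Ys$ is the complement in $\P\Sym^4(V^*)$ of the vanishing loci of the two homogeneous invariants $\Ib_{12}$ and $\Ib_9$, hence is affine and normal. The group $\PSL(V)$ is semisimple of adjoint type and so admits no non-trivial character. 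Finally, the argument at the end of the proof of Lemma~\ref{lem:sec} shows that every $\SL(V)$-orbit in $\Ys$ meets $\Zs$; since the loci removed from $\P\Ys$ are cut out by $G$-invariants, the analogous property persists after restricting, so every closed $G$-orbit in $\P'\Ys$ meets $\P'\Zs$. The proposition then yields an isomorphism $K[\P'\Ys]^G \xrightarrow{\sim} K[\P'\Zs]^H$.

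Next I would verify that $(\jb \ell^*)/(\bb_0 \ell^*)^d$ defines a regular $H$-invariant function on $\P'\Zs$. Both numerator and denominator are $\SL(W)$-invariant regular functions on $\Zs$, and since $\jb$ has total degree $d$ while $\bb_0$ has degree $1$, the quotient is homogeneous of degree $0$ in the coefficients of $\Fb$ and descends to a well-defined function on $\P'\Zs$. Regularity follows from Lemma~\ref{lem:I9b0}: the identity $\Ib_9 = 5\,\ub^2 (\bb_0 \ell^*)$ on $\Zs$, combined with the fact that $\ub$ is nowhere vanishing on $\Zs$ (since $\rhob$ is non-degenerate throughout $\Ys$), shows that excluding $\{\Ib_9 = 0\}$ forces $\bb_0 \ell^*$ to be non-vanishing. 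Under the isomorphism from the previous step, $(\jb \ell^*)/(\bb_0 \ell^*)^d$ therefore corresponds to a $G$-invariant regular function on $\P'\Ys$.

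To conclude I would describe the ring of $G$-invariant regular functions on $\P'\Ys$ explicitly. Since $\Ib_{12}$ and $\Ib_9$ are homogeneous $G$-invariants, the regular functions on $\P'\Ys$ form the degree-zero part of the graded localization $K[\Sym^4(V^*)][\Ib_{12}^{-1}, \Ib_9^{-1}]$, so they are of the form $\Pb/(\Ib_{12}^s\,\Ib_9^t)$ with $\Pb \in K[\Sym^4(V^*)]$ homogeneous of degree $12s+9t$. Taking $G$-invariants and invoking Lemma~\ref{lem:ufd}\,(i) allows one to choose $\Pb$ itself to be $\SL(V)$-invariant, hence a polynomial in the Dixmier--Ohno invariants by Theorem~\ref{thm:DO}, which yields the claimed expression. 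The step I expect to require the most care is the verification of the third hypothesis of Proposition~\ref{prop:section}\,(b) — that every closed $G$-orbit in $\P'\Ys$ meets $\P'\Zs$ — which reduces to transitivity of $\SL(V)$ on non-degenerate ternary quadrics together with the $G$-invariance of the removed loci, both already available from earlier sections.
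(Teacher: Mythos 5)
Your proposal is correct and takes essentially the same route as the paper: you apply the second part of Proposition~\ref{prop:section} to the $(G,H)$-section $\P'\Zs\subset\P'\Ys$ with $G=\PSL(V)$, $H=\PSL(W)$, use Lemma~\ref{lem:I9b0} to ensure $\bb_0\ell^*$ is non-vanishing on $\P'\Zs$, and read off the form of the answer from the description of $G$-invariant regular functions on $\P'\Ys$ as degree-zero elements of the localization at $\Ib_{12}\Ib_9$. The paper merely adds a preliminary appeal to the first part of Proposition~\ref{prop:section} on $\P\Ys$ (which you rightly skip), and your explicit verification of the three hypotheses of the second part is a welcome expansion of what the paper leaves implicit.
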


If we use $T = T_{\intt}$, we have proved in Proposition~\ref{prop:integrality}
that $\jb \ell^* T$ and $\bb_0 \ell^* T$ belong to $K[\Sym^4(V^*)]$. Moreover
\begin{equation}
  \frac{\jb \ell^* T}{(\bb_0 \ell^* T)^d}
  =
  \frac{\Pb.T}{(\Ib_{12}.T)^s \cdot (\Ib_9.T)^t}
  =
  \frac{\Pb}{\Ib_{12}^s \cdot \Ib_9^t}
\end{equation}
by invariance of the denominator and numerator under $\SL(V)$ and the fact that
they have the same degree. Hence, in the unique factorization domain
$\C[\Sym^4(V^*)]$ we have the equality of polynomials
\begin{equation}
  (\jb \ell^* T_{\intt}) \cdot (\Ib_{12}^s \cdot \Ib_9^t)
  = \Pb \cdot ( \bb_0 \ell^* T_{\intt})^d.
\end{equation}
As before, one verifies that $\bb_0 \ell^* T_{\intt}$ is not divisible by
$\Ib_{12}$ by finding a single plane quartic $F$ for which $\bb_0 \ell^*
T_{\intt} (F)$ is non-zero while $\Ib_{12}(F)=0$. Hence $\Ib_{12}^s$ divides
$\Pb$. If we denote $\Pb_1 = \Pb/\Ib_{12}^s$, since $\Pb$ and $\Ib_{12}^s$ are
invariants under the action of $\SL (V)$, $\Pb_1$ is as well and is therefore
a polynomial in the Dixmier--Ohno invariants. We can also assume, after a
possible division, that $\Pb_1$ is coprime to $\Ib_9$ and we can write $(\jb
\ell^*)/( \bb_0 \ell^*)^d = \Pb_1/\Ib_9^{t_1}$ with $t_1 \in \N$.

To conclude, we only have to prove that $t_1 = d$. Again we have
\begin{equation}
  \frac{\jb \ell^* T_{\intt}}{(\bb_0 \ell^* T_{\intt})^d}
  =
  \frac{\Pb_1}{\Ib_9^{t_1}}\,,
\end{equation}
which gives
\begin{equation}\label{eq:9left}
  \jb \ell^* T_{\intt}
  =
  \frac{\Pb_1}{\Ib_{9}^{t_1}} \cdot (\bb_0 \ell^* T_{\intt})^d.
\end{equation}
From Lemma~\ref{lem:I9b0} and~\eqref{eq:elem} we obtain that, up to a
multiplicative constant,
\begin{equation}
  \bb_0^{3d} \ell^* T_{\intt}
  = 
  \Ib_9^{3d} \frac{\det(T_{\intt})^{4d}}{\Ib_{12}^{2d}}
  = 
  \Ib_9^{3d} \cdot  a^{30 d} \delta^{24d}
  \left(\frac{\Deltab}{\Ib_{12}}\right)^{2d}
  =  \Ib_9^{3d}  \cdot a^{30 d} \delta^{24d}
\end{equation}
since $\Deltab = \disc (\rhob) = \Ib_{12}$. We therefore see that
\eqref{eq:9left} translates, up to a constant, into
\begin{equation}
  \Ib_9^{3 t_1} \cdot (\jb \ell^* T_{\intt})^3
  =
  \Ib_9^{3d} \cdot \Pb_1^3 \cdot  (a^{30 d} \delta^{24d})
\end{equation}
as an equality in $\C[\Sym^4(V^*)]$. To conclude, let us observe that $\Ib_9$
is irreducible. Indeed, otherwise it would decompose as a product of invariants
by Lemma~\ref{lem:ufd}\,-\,(i) and therefore it would be a polynomial in $\Ib_3$
and $\Ib_6$. This is not possible as $\Ib_9, \Ib_6, \Ib_3$ are algebraically
independent. Moreover $a$ and $\delta$ are polynomials of degree less than
$\Ib_9$, so $\Ib_9$ is coprime to $a$ and $\delta$ and by hypothesis to
$\Pb_1$. Therefore we get $t_1 = d$, which concludes the proof.

\subsection{Evaluation-interpolation}
\label{sec:interpolation}

Since the 63 joint invariants given in Table~\ref{tab:jinv} form a minimal
basis of the ring of invariants $K [\Sym^8 (W^*) \oplus \Sym^4 (W^*)]^{\SL
  (W)}$, we proceed to determine the polynomials $\Pb$ in
Theorem~\ref{thm:main} for them.
Here general algebraic-geometric considerations do not seem to help us further,
and we therefore use a method that has served us well in the past, namely that
of evaluation-interpolation~\cite{LR11}.

From~\eqref{eq:I9b0} and~\eqref{eq:mainproj}, we seek to determine the regular
function on $\Ys$ defined by $\jb\,\ell^*\,T_0 \times \Ib_9^d\, /\,
(\bb_0\,\ell^*\,T_0\,)^d$ as an homogeneous polynomial $\Pb$ in the
Dixmier--Ohno invariants; we may as well restrict to $\Zs$ because up to the
usual scalar the calculated polynomial $\Pb$ is identical.
Our strategy is to evaluate $\Pb$ at lots of random quartics $F$ in $\Ys$, so
that we get equations
\begin{equation}\label{eq:eval}
  \Pb (\Ib_3(F), \Ib_6(F), \ldots, \Ib_{27}(F))
  = \jb (\ell^*(\tilde{F})) \times
  \frac{\Ib_9 (F)^d}{\bb_0 (\ell^*(\tilde{F}))^d} .
\end{equation}
Since we know the homogeneous weight $9 d$ of $\Pb$, we can recover it by
evaluating it in a known finite number of points. More systematically, our
strategy is as follows:
\begin{enumerate}
\item Generate a large family of random quartics $F$;
\item Given a quartic $F$ in the family, use either the matrix~\eqref{eq:T0}
  (or~\eqref{eq:Tint}) from Section~\ref{sec:normalization} to determine a
  quartic $\tilde{F}$ with normalized covariant $\tilde{\rho}$\,;
\item Calculate the quotients $\jb (\ell^*(\tilde{F})) \, \Ib_9(F)^d \, / \,
  \bb_0 (\ell^*(\tilde{F}))^d$ as in~\eqref{eq:eval} by using our explicit
  knowledge of $\ell^*$ and the joint invariants on $\Sym^8 (W) \oplus
  \Sym^4 (W)$ from Section~\ref{sec:jointinvs};
\item Evaluate at $F$ all the monomials of degree $9d$ in
  the Dixmier--Ohno invariants;
\item Determine $\Pb$ by solving the linear equation in its coefficients
  obtained from the evaluations in (iii) and (iv).
\end{enumerate}

The interpolation in the final step amounts to a search for a polynomial that
is homogeneous of degree $9 d$ in the Dixmier--Ohno invariants such that its
values coincide with those of $\jb (\ell^*(\tilde{F})) \, \Ib_9^d (F) \, / \,
\bb_0(\ell^*(\tilde{F}))^d$.
The most expensive computation in this approach is the matrix inversion, the
complexity of which depends of the dimension of the matrix, \ie\ the number of
monomials of degree $9d$ in the Dixmier--Ohno invariants. This number is equal
to the coefficient of $x^{9d}$ in the series expansion of $\prod
(1-x^{\deg(\Ib)})^{{-}1}$ where the $\Ib$ are the Dixmier--Ohno invariants. In
the largest computations that we have done, we had $d = 10$, which gives a
matrix of dimension 27398. The size of the required family of quartics
should be larger than this dimension; taking one hundred more is enough
in practice.

An additional complication is that the Dixmier--Ohno invariants satisfy certain
relations. This means that in order for $\Pb$ to be unique we have to restrict
to a subset of the monomials considered above, the size of which is given by a
coefficient of the Hilbert series of the ring of Dixmier--Ohno invariants,
which was determined in~\cite{shioda67}. While this coefficient can indeed be
computed, determining a correct subset amounts to calculating a Gröbner basis
of the ideal of relations. This still seems to be beyond reach. We therefore
run into the additional complication that the polynomial $\Pb$ that we obtain
is not unique. For example, because of the relations between the Dixmier--Ohno
invariants the interpolation matrix already has a kernel of dimension $14861$
for $d = 10$.

Directly inverting the matrix over a field of characteristic $0$ turns out to
be a rather suboptimal approach, since one then quickly runs out of memory.
Moreover, the heights of the corresponding polynomial expression also
increases rapidly, with a numerator and denominator of about $270$ decimal
digits when $d = 10$. To ease our computations, we have therefore used another
classical trick, namely the determination of the coefficients of $\Pb$ modulo
a sufficiently large amount of 9-digit primes. Using the Chinese remainder
theorem then allowed us to recover a hypothetical solution over $\Q$.

Once a conjectural expression for $\Pb$ has been obtained, the result can be
verified to be correct \emph{a posteriori}, that is, by checking that it gives
the correct answer on a sufficiently large set of ternary quartics. More
precisely, our procedure to verify the conjectural expression of a joint
invariant $\jb$ of degree $d$ is as follows.
\begin{enumerate}
\item Calculate the monomials $B$ of weight $9 d$ in the Dixmier--Ohno
  invariants;
\item Generate a large family $S$ of random quartics $F$;
\item Verify that the interpolated forms over $\Q$ are correct for the
  quartics in $S$, in the sense that~\eqref{eq:eval} holds for all $F$ in
  $S$; if not, terminate and indicate that the interpolation is incorrect;
\item Form the matrix $M$ obtained by evaluating the monomials in $B$ at the
  quartics in $S$;
\item If the rank of $M$ equals the $9 d$-th coefficient of the Hilbert series
  of the ring of Dixmier--Ohno invariants, terminate and return that the
  interpolation is correct; otherwise add more quartics to $S$ and start
  again at step (ii). 
\end{enumerate}
An especially pleasant feature of this verification algorithm is that it again
suffices to determine the rank of the matrix $M$ over a field of small
characteristic; this greatly speeds up the calculations.

\begin{table}[htbp]
  \centering
  \begin{tabular}{c|c|c|r|r}
    Degree $d$  & Dimension  & \# digits & Timings & Size (bytes)\\
    \hline\hline
    2      &   19    &  28    & 0 sec & $\simeq$ 50 b\\
    3      &   67    &  46    & 1 sec & $\simeq$ 1 kb\\
    4      &  206    &  64    & 5 sec & $\simeq$ 2,5 kb\\
    5      &  557    &  91    & 20 sec & $\simeq$ 10 kb\\
    6      & 1380    &  181    & 3,5 min & $\simeq$ 35 kb\\
    7      & 3166    &  181    & 23 min & $\simeq$ 80 kb\\
    8      & 6835    &  181    & 2,6 hours & $\simeq$ 200 kb\\
    9      & 13993   &  271    & 27 hours & $\simeq$ 500 kb\\
    10     & 27398   & 271  & 8 days & $\simeq$ 1 Mb\\
    11     & 51566   & ?    & ? & ?\\\hline
  \end{tabular}
  \caption{Interpolation timings}
  \label{tab:dimdig}
\end{table}

We performed our calculations with the computer algebra system
\textsc{magma}~\cite{Magma} for 62 out of the 63 invariants given in
Table~\ref{tab:jinv}. The exception is the invariant of largest degree
$\jb_{10, 1}$. Table~\ref{tab:dimdig} gives some indication of the accuracy
required in our computations. The timings in it are for a single invariant of
degree $d$, on a computer based on a single \textsc{intel} i7 -- 2.80 GHz
processor. The sign ``?'' means that the computation did not finish within the
time allotted to it.

As a result, we found for instance that
\begin{eqnarray*}
  7^2\times  \jb_{0,2} \cdot \Ib_9^2 / \bb_0^2
  &=&
  100\ \Ib_{9}^2 - 300\ \Ib_{18} \,,\\
  7^3\times \jb_{0,3} \cdot \Ib_9^3 / \bb_0^3
  &=&
  -1000\ \Ib_{9}^3+4500\ \Ib_{9}\, \Ib_{18}-13500\ \Ib_{12}\, \Ib_{15}\,,
\end{eqnarray*}
\begin{multline*}
  2^{5}\cdot 3^{4}\cdot 7^{2}\times{\jb_{2,0}}\cdot \Ib_9^2/\bb_0^2= 2\cdot
  5\cdot 7\cdot 13\, {{\Ib_{03}}}^{4}\, {\Ib_{06}}%
  -5\, {{\Ib_{03}}}^{3}\, {\Ib_{09}}\\%
  -3^{2}\cdot 5\, {{\Ib_{03}}}^{3}\, {\Jb_{09}}%
  +2^{6}\cdot 5\cdot 7\cdot 29\, {{\Ib_{03}}}^{2}\, {{\Ib_{06}}}^{2}%
  -2\cdot 5\cdot 23\, {{\Ib_{03}}}^{2}\, {\Ib_{12}}\\%
  +2^{2}\cdot 3\cdot 5\, {{\Ib_{03}}}^{2}\, {\Jb_{12}}%
  -2^{4}\cdot 3^{2}\cdot 5\cdot 17\, {\Ib_{03}}\,{\Ib_{06}}\,{\Ib_{09}}%
  +2^{4}\cdot 3^{3}\cdot 5^{2} \, {\Ib_{03}}\,{\Ib_{06}}\,{\Jb_{09}}\\%
  +2^{6}\cdot 5\cdot 7\, {\Ib_{03}}\,{\Jb_{15}}%
  + 2^{9}\cdot 3^{2}\cdot 5^{2}\cdot 7\, {{\Ib_{06}}}^{3}%
  + 2^{5}\cdot 3^{2}\cdot 5\cdot 113\, {\Ib_{06}}\,{\Ib_{12}}\\%
  -2^{6}\cdot 3^{2}\cdot 5^{2}\, {\Ib_{06}}\,{\Jb_{12}}%
  +2^{5}\cdot 3^{2}\, {{\Ib_{09}}}^{2} %
  -2^{3}\cdot 3^{2}\cdot 5\, {\Ib_{18}}%
  -2^{3}\cdot 3^{2}\cdot 5 \cdot 7 \, {\Jb_{18}}\,.
\end{multline*}

We observe that only very few primes divide the denominators of the
coefficients, namely 2, 3 and 7. In fact, for the 62 relations that we
computed, only 11 primes occur in the denominators of the coefficients; these
are 2, 3, 5, 7, 11, 13, 17, 23, 37, 79 and 89.

\begin{remark}
  It is perhaps worth noting 
  here that we do not need the full theoretical knowledge from
  Section~\ref{sec:denom} to obtain an inroad into our problem. Indeed,
  Proposition~\ref{prop:integrality} gives us a bound on the denominator of
  $\jb (\ell^* (\tilde{F})) / ({\bb_0} (\ell^* (\tilde{F}))^d$ since by
  Lemma~\ref{lem:ufd}\,-\,(ii) this denominator divides ${\bb_0} (\ell^*
  (\tilde{F}))^d$. In principle, this bound on the degree of the denominator
  gives us all that we need to make the above linear algebra work, at the cost
  of a longer running time since our lack of theoretical knowledge makes us
  miss the presence of certain cancellations between the numerator and
  denominator.
  
  Had we therefore lacked our particular knowledge of the denominator, we
  could, after finding some conjectural expressions in low degree, have
  proceeded under the \emph{assumption} that the invariants were is of the form
  $\Pb / \Ib_9^d$, derived conjectural expressions for the $\Pb$, and then
  verified that these are indeed the only rational expressions that satisfy the
  given denominator bound and coincide with $\jb \ell^* T / (\bb \ell^* T)^d$
  on a sufficiently large set of points. As this verification is relatively
  quick, this is a feasible approach. We include this remark because such an
  approach could still be useful in more general situations where fewer
  theoretical tools are available.
\end{remark}

\begin{remark}
  We did similar interpolation experiments for the locus of quartics on which
  either the covariant $\Tb$ or the covariant $\Xb$ of order 2 defined
  by~\eqref{eq:tau} are normalized. This time, we have to use the equivariant
  map $\ell$ instead of $\ell^*$ because we are using covariants instead of
  contravariants (\cf\ Section~\ref{sec:vr}).

  But a major issue is then that $\Tb$ and $\Xb$ are now of degree 5 rather
  than 4. This causes the role of the invariant $\Ib_9$ in~\eqref{eq:eval} to
  be taken over by invariants $\Ib_T$ and $\Ib_X$ of degree 21. While we have
  been able to determine the corresponding interpolation polynomials $\Pb$ for
  the joint invariants of degree $\leq$ 4, it now seems far more difficult to
  compute them for larger degrees. For $d = 10$, the largest degree in which
  we could successfully compute when using the contravariant $\rhob$, this
  leads to a linear system of dimension 16\,893\,297 over $\Q$.
\end{remark}

\section{Reconstruction}
\label{sec:reconstruction}

In this section we will explain how to reconstruct a quartic curve from its
Dixmier--Ohno invariants. After some theoretical considerations in
Section~\ref{sec:rec_locus}, we give a first version of our reconstruction
algorithms in Section~\ref{sec:rec_algs}. We briefly consider rationality
issues that appear when working over a non-algebraically closed base field in
Section~\ref{sec:descent}. The concluding Section~\ref{sec:implementation}
gives an example and discusses the efficiency of our current implementation of
these algorithms.

\subsection{Stability}
\label{sec:rec_locus}

In Section~\ref{sec:dixmier}, we have explained how to obtain the Dixmier--Ohno
invariants of ternary quartic forms. Geometrically, this gives rise to a
composed arrow
\begin{equation}\label{eq:arrow_aff}
  \Spec K [\Sym^4 (V^*)]
  \twoheadrightarrow
  \Spec K [\Sym^4 (V^*)]^{\SL (V)}
  \hookrightarrow
  \A^{13}_K .
\end{equation}
On $K$-points, this sends a ternary quartic form $F$ over $K$ to its
tuple of Dixmier--Ohno invariants. 

We will denote the closures of the image of~\eqref{eq:arrow_aff} by $\A_{\DO}$.
Now it is not always true that the inverse image of an element of $\A_{\DO}
(K)$ consists of a single $\SL (V)$-orbit of ternary quartic forms. If $F$ is a
ternary quartic, we will say that it is \defi{stable} if it belongs to the
stable locus under the action of $\SL (V)$ in the sense of geometric invariant
theory, \ie, if its orbit \emph{is} uniquely determined by its invariants. This
locus was characterized by Mumford (see~\cite[\S~4]{looijenga}); its geometric
points correspond to those quartics $F$ for which the curve $\Xs : F = 0$ is
reduced and has ordinary double points or cusps. This includes the locus of
ternary quartics $F$ that define smooth plane quartic curves $\Xs$.

As was sketched in the introduction, our way to reconstruct a ternary quartic
form from its Dixmier--Ohno invariants is essentially to make a
counterclockwise tour through the diagram
\begin{equation}
  \begin{split}
    \xymatrix{
      \Spec K [\Sym^4 (V^*)] \supset \Zs
      \ar[r] \ar[d] &
      \Zs' \subset \Spec K [ \bigoplus_{n = 8,4,0} \Sym^n (W^*) ]
      \ar[d] \\
      \A_{\DO}
      \ar@{-->}[r]^{\phi} &
      \A_{\JI}
    }
  \end{split}
\end{equation}
Here the space $\A_{\JI}$ is the affine space corresponding to the generators
of the ring of joint invariants in Section~\ref{sec:jointinvs} along with the
invariant $\bb_0$; it is constructed as $\A_{\DO}$ above. The top arrow is
induced by the equivariant map $\ell^*$ from~\eqref{eq:equivariant}, the
vertical arrows are canonical projection maps, and the bottom arrow $\phi$ is
the rational map described by the functions in Theorem~\ref{thm:main}.

As we want to recover a quartic $F$ from its invariants, we assume that $F$ is
stable. Our method is also based on the assumption that $\Ib_{12}(F) \ne 0$. We
can therefore assume that $F \in \Zs (K)$ and consider the associated triple of
binary forms $\bv = (b_8, b_4, b_0) \in \Zs' (K) \subset \Ys' (K)$.

Since we have $\ub (F) \ne 0$ for such an $F$, the map $\phi$ is well-defined
by Proposition~\ref{prop:rationality}. We denote the joint invariants that are
the coordinates of the image of the Dixmier--Ohno invariants of $F$ by
$j_{d_1,d_2}$. To analyze when our reconstruction can work, it is then enough
to check when the $j_{d_1,d_2}$ determine a unique orbit of $\bv$ under
$\SL(W)$. Under the assumptions above, the following theorem proves that this
is always the case.

\begin{theorem}\label{thm:rec_locus}
  Let $F$ be a stable ternary quartic form such that $\Ib_{12}(F) \ne 0$ and
  let  $j_{d_1,d_2}$ the joint invariants image of the Dixmier--Ohno invariants
  of $F$ by $\phi$. Then the $j_{d_1, d_2}$ determine a unique triple $\bv =
  (b_8, b_4, b_0)$ up to the action of $\SL (W)$. 
\end{theorem}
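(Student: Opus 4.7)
The plan is to reduce the statement to the stability of the triple $\bv$ under the $\SL(W)$-action on $\Ys' := \Sym^8(W^*) \oplus \Sym^4(W^*) \oplus \Sym^0(W^*)$, and then to invoke the standard fact that invariants separate orbits on the stable locus for reductive group actions.

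First, since $\Ib_{12}(F) \ne 0$, we have $F \in \Ys$, so by Lemma~\ref{lem:sec} we can replace $F$ by an $\SL(V)$-equivalent form $F' \in \Zs$. Set $\bv := \ell^*(F') \in \Zs' := \ell^*(\Zs)$; by Theorem~\ref{thm:main} the values $\jb_{d_1,d_2}(\bv)$ coincide with the $j_{d_1,d_2}$ supplied by $\phi$. Since the 63 invariants of Table~\ref{tab:jinv} generate the invariant ring $K[\Ys']^{\SL(W)}$, the claim that these values determine $\bv$ up to $\SL(W)$ will follow as soon as $\bv$ is stable for the $\SL(W)$-action on the open $\SL(W)$-invariant subset $\Ys'' := \ell^*(\Ys) \subset \Ys'$. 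Indeed, any alternative triple $\bv'$ with the same joint invariants automatically satisfies $\Ib_{12}((\ell^*)^{-1}(\bv')) \ne 0$---this function is $\SL(W)$-invariant on $\Ys'$ by the $\SL(W)$-equivariance of $\ell^*$ through $h^*$, hence a polynomial in the joint invariants---so $\bv' \in \Ys''$, and stability then forces $\bv'$ into the $\SL(W)$-orbit of $\bv$.

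The core step is therefore to establish stability of $\bv$ under $\SL(W)$ on $\Ys''$. By stability of $F$, the orbit $\SL(V) \cdot F'$ is closed in $\Ys$ with finite stabilizer. Its intersection with the closed subvariety $\Zs \subset \Ys$ is, by the $(G,H)$-section property of Lemma~\ref{lem:sec} (with $\Zs_1 = \Zs$), precisely the orbit of $F'$ under the stabilizer $\langle \zeta_3 \rangle \cdot \SO(v_2^2 - v_1 v_3) = h^*(\langle \zeta_3 \rangle \cdot \SL(W))$ of $\Zs$ in $\SL(V)$, and is therefore closed in $\Zs$. The $\SL(W)$-equivariant linear isomorphism $\ell^*$ transports this to the closedness of the $(\langle \zeta_3 \rangle \SL(W))$-orbit of $\bv$ in $\Zs'$, and since $\Zs'$ is closed in $\Ys''$ (because $\ell^*$ is a linear isomorphism and $\Zs$ is closed in $\Ys$) this orbit is also closed in $\Ys''$. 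The $\SL(W)$-orbit of $\bv$, being the image of the irreducible connected group $\SL(W)$, forms an irreducible component of this $\langle \zeta_3 \rangle \SL(W)$-orbit and is thus itself closed in $\Ys''$. Finiteness of the stabilizer of $\bv$ in $\SL(W)$ follows from finiteness of both the stabilizer of $F'$ in $\SL(V)$ (by stability of $F$) and the kernel of $h^*$.

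The main technical difficulty of the argument is the transfer of orbit-closedness through the $(G,H)$-section followed by the equivariant isomorphism $\ell^*$, which requires a careful bookkeeping of the interplay between the three groups $\langle \zeta_3 \rangle$, $\SL(W)$, and $\SL(V)$; once stability on $\Ys''$ is secured, the conclusion is a routine consequence of the fundamental theorem of invariant theory for reductive group actions on the stable locus.
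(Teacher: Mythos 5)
Your proof is correct as far as I can tell, but it takes a genuinely different route from the paper's.

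The paper proceeds by an explicit numerical argument. It first states a lemma, proved with the Hilbert--Mumford criterion, characterizing exactly when a pair $(b_8, b_4) \in \Sym^8(W^*) \oplus \Sym^4(W^*)$ fails to be stable: namely, when $b_8$ and $b_4$ share a common root of multiplicity $\geq 4$ for $b_8$ and $\geq 2$ for $b_4$. It then proves Theorem~\ref{thm:rec_locus} by contraposition: assuming $(b_8, b_4)$ non-stable, one may reduce to $b_8 = z_1^4 c_4(z_1,z_2)$, $b_4 = z_1^2 c_2(z_1,z_2)$ with $b_0$ free, apply $(\ell^*)^{-1}$ to this $9$-parameter family, and verify by a \Magma\ computation that the resulting quartic has a non-ordinary singularity at $(0:0:1)$, so is itself not stable. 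Your proof instead avoids any explicit family and any numerical criterion: you transfer orbit-closedness and stabilizer-finiteness directly from the $\SL(V)$-action on $F$ to the $\SL(W)$-action on $\bv$, via the $(G,H)$-section property of Lemma~\ref{lem:sec} and the equivariant linear isomorphism $\ell^*$, and then invoke the standard separation of closed orbits by invariants for reductive groups. Both arguments are sound; the key differences are that the paper's route buys an explicit characterization of the non-stable locus (useful downstream in Section~\ref{sec:rec_algs}, where the genericity assumptions are written directly in terms of multiplicities of roots of $b_8$), at the cost of a nontrivial computer verification, while your route is purely conceptual and computation-free, at the cost of some delicate bookkeeping when splitting the closed $\langle \zeta_3 \rangle \SL(W)$-orbit into $\SL(W)$-orbits and when passing from closedness in $\Ys''$ to closedness in $\Ys'$. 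One small point worth flagging: your reduction at the start ("any alternative triple $\bv'$ ... automatically satisfies $\Ib_{12}((\ell^*)^{-1}(\bv')) \neq 0$") shows that such a $\bv'$ lies in $\Ys''$, but the final conclusion that $\bv'$ lies in the orbit of $\bv$ strictly speaking also requires the orbit of $\bv'$ to be closed (or that $\bv'$ arises from a quartic via the same construction). This is also left implicit in the paper, so your proof is not worse in this respect, but it is a point both arguments gloss over.
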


As $b_0$ is a joint invariant, the proof comes down to determining when the
orbits of the pair $(b_8, b_4)$ are uniquely determined by their joint
invariants $\jb_{d_1,d_2}$. This is the case if and only if $\bv$ is in the
stable locus of the action of $\SL(W)$ on $\Sym_8(W^*) \oplus \Sym^4(W^*)$ in
the sense of geometric invariant theory. We have the following result.

\begin{lemma}
  An element $(b_8, b_4) \in \Sym^8 (W^*) \oplus \Sym^4 (W^*)$ is not stable
  for the action of $\SL (W)$ if and only if the forms $b_8$ and $b_4$ have a
  common root that is of multiplicity greater than $3$ for $b_8$ and of
  multiplicity greater than $1$ for $b_4$.
\end{lemma}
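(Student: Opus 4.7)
The plan is to apply the Hilbert--Mumford numerical criterion for GIT stability to the diagonal action of $\SL(W)$ on the reducible representation $\Sym^8(W^*) \oplus \Sym^4(W^*)$. By that criterion, a pair $(b_8, b_4)$ fails to be stable if and only if there exists a non-trivial one-parameter subgroup $\lambda : \G_m \to \SL(W)$ such that $\lim_{t \to 0} \lambda(t) . (b_8, b_4)$ exists in $\Sym^8(W^*) \oplus \Sym^4(W^*)$, equivalently such that every weight of a non-zero component of $(b_8, b_4)$ in the $\lambda$-weight decomposition is non-negative.

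Since every one-parameter subgroup of $\SL(W)$ factors through a maximal torus, and all maximal tori of $\SL(W)$ are conjugate, I would reduce to the standard diagonal subgroup $\lambda_0(t) = \mathrm{diag}(t, t^{-1})$ in the basis $(w_1, w_2)$: conjugating $\lambda$ to $\lambda_0$ simultaneously replaces $(b_8, b_4)$ by an $\SL(W)$-translate, which is harmless since stability is an orbit invariant. Rescaling $t$ takes care of primitivity, and the non-trivial Weyl element of $\SL(W)$ absorbs the sign ambiguity in the choice of cocharacter.

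A short computation then shows that the monomial $z_1^i z_2^{n-i}$ carries $\lambda_0$-weight $n - 2i$. Writing $b_8 = \sum_i a_i z_1^i z_2^{8-i}$ and $b_4 = \sum_i c_i z_1^i z_2^{4-i}$, the condition that every non-zero component has non-negative weight becomes $a_i = 0$ for $i > 4$ and $c_i = 0$ for $i > 2$, that is, $z_2^4 \mid b_8$ and $z_2^2 \mid b_4$. This is precisely the statement that the point $[1:0] \in \P(W)$ is a common root of $b_8$ of multiplicity at least $4$ and of $b_4$ of multiplicity at least $2$. Since $\SL(W)$ acts transitively on $\P(W)$, any point carrying such a common root can be moved to $[1:0]$ by a base change, so the existence of a suitable 1-PS is equivalent to the existence of a common root of $b_8$ and $b_4$ with multiplicity greater than $3$ in $b_8$ and greater than $1$ in $b_4$, somewhere in $\P(W)$.

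The only point requiring care, and so the main potential pitfall, is using the form of the Hilbert--Mumford criterion appropriate to \emph{stability} rather than semistability: the relevant test is ``all $\lambda$-weights of non-zero components are $\geq 0$'', not ``$> 0$''. The thresholds ``multiplicity greater than $3$'' for $b_8$ and ``multiplicity greater than $1$'' for $b_4$ are exactly $n/2$ for $n=8$ and $n=4$, the critical multiplicities at which the central monomial $z_1^{n/2} z_2^{n/2}$ survives with weight zero while every other surviving monomial has strictly positive weight. Once this distinction is kept straight, the remainder of the argument is routine bookkeeping.
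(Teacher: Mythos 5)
Your proof is correct and follows essentially the same route as the paper: both apply the Hilbert--Mumford numerical criterion, reduce via conjugacy of maximal tori to the standard diagonal $\mathrm{diag}(t,t^{-1})$, read off the monomial weights $n-2i$, observe that failure of stability forces $z_2^4 \mid b_8$ and $z_2^2 \mid b_4$ (up to the Weyl symmetry), and then use transitivity of $\SL(W)$ on $\P(W)$ to phrase this as a common root of the stated multiplicities. The paper invokes the ``$0$ in the interior of the weight polytope'' form of the criterion and cites \cite[Theorems~9.2 and~9.3]{dolgainv} for the reduction to a single torus, while you phrase the same content as ``all $\lambda$-weights non-negative''; this is a cosmetic difference.
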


\begin{proof}
  This can be proved by using the numerical stability criterion due to Hilbert
  and Mumford. We follow the setting of~\cite[Sec.9.4]{dolgainv}. Considering
  the action of the maximal torus $\begin{bmatrix} t & 0 \\ 0  & t^{-1}
  \end{bmatrix}$ on $(b_8, b_4)$ shows that the set of weights of $(b_8, b_4)$ is
  contained in $S = \{-8, -6, \ldots, 6, 8\}$. More precisely it is equal to
  \begin{equation}
    wt (b_8,b_4)
    =
    \{ 2 i : \text{$i$ such that $b_{8,4+i} \ne 0$ or $b_{4,2+i} \ne 0$} \}\,,
  \end{equation}
  where we have written $b_8 = \sum b_{8,i} \, z_1^i z_2^{8-i}$ and $b_4 = \sum
  b_{4,i} \, z_1^i z_2^{4-i}$. By~\cite[Theorem~9.2]{dolgainv}, one knows that
  $(b_8, b_4)$ is stable for the action of this torus if and only if $0$
  belongs to the interior of the convex hull of $wt (b_8, b_4)$ in $\R$. Hence
  $(b_8, b_4)$ is not stable for this action if and only if 
  \begin{equation}
    \left\{\begin{array}{l}
        b_{8,8} = b_{8,7} = b_{8,6} = b_{8,5} = 0\,,\\
        b_{4,4} = b_{4,3} = 0\,,
      \end{array}\right.
    \; \text{ or } \;
    \left\{\begin{array}{l}
        b_{8,0} = b_{8,1} = b_{8,2} = b_{8,3} = 0 \,,\\
        b_{4,0} = b_{4,1} = 0\,.
      \end{array}\right.
  \end{equation}

  We denote the forms that satisfy one of these conditions by $U$. Now
  by~\cite[Theorem~9.3]{dolgainv}, we know that $(b_8,b_4)$ is not stable if and
  only if there exists a $T \in \SL(W)$ such that $(b_8,b_4).T \in U$. If
  $(b_8,b_4)$ is such that  $b_8$ and $b_4$ have a common root that is of
  multiplicity greater than $3$ for $b_8$ and greater than $1$ for $b_4$, we
  can find an element $T$ of $\SL(W)$ that sends this root to $(0:1)$, so that
  $(b_8, b_4).T \in U$. Conversely, if $(b_8,b_4)$ is not stable, then there
  exists $T \in \SL(W)$ such that $(b_8,b_4).T \in U$. This indeed implies that
  $b_8$ and $b_4$ have a common root with the specified multiplicities.
\end{proof}

\begin{proof}[Proof of Theorem~\ref{thm:rec_locus}]
  Suppose that $(b_8, b_4)$ is not stable. Then up to the action of $\SL (W)$,
  we have $b_8 = z_1^4 \cdot c_4 (z_1, z_2)$ with $c_4$ a universal form of
  degree $4$ and $b_4 = z_1^2 \cdot c_2 (z_1, z_2)$ with $c_2$ a universal form
  of degree $2$. The form $b_0$ can be chosen freely. We now consider the curve
  $F = (\ell^*)^{-1}(\bv)$, which depends on $9$ parameters. A computation with
  \Magma\ then shows that $F$ has a non-ordinary singularity at $(0 : 0 : 1)$.
\end{proof}

\subsection{Algorithms}
\label{sec:rec_algs}

Let $\Iv = (I_3, \dots, I_{27}) \in \A_{\DO} (K)$ be a tuple of Dixmier--Ohno
invariants, that is, an element of the image of the
embedding~\eqref{eq:arrow_aff}. The methods of the previous section allow us to
derive an algorithm to generically construct a ternary quartic form $F$ whose
tuple of Dixmier--Ohno invariants equals $\Iv$.

We want to use Theorem~\ref{thm:main}. To make that possible, we make our first
assumption.\smallskip

\begin{flushleft}
  \ \hspace*{1cm}(i) \parbox[t]{0.8\textwidth}{We have $I_{12} \neq 0$.}
\end{flushleft}\smallskip

\noindent
Because of this, we may suppose that $F$ is an element of $\Zs$ and  we can
then use~\eqref{eq:ub3} with $T=\textrm{Id}$ to find a value of the constant $u
\in K^*$. In light of~\eqref{eq:mainaff} we can then calculate the exact value
of any joint invariants $\jb \ell^* (F)$ by evaluating the polynomials $\Pb$
computed by interpolation in Section~\ref{sec:interpolation} at the tuple
$\Iv$. In the current implementation, we favor a different strategy in order to
avoid the computation of the cube root $u$. Since we know how the Dixmier--Ohno
invariants behave under scalar multiplication of the underlying forms, we can
further assume without loss of generality that $F$ satisfies $\bb_0\ell^* (F) =
I_9$. Beyond avoiding a cube root, this simplifies the
formula~\eqref{eq:mainproj}. If we are really interested in getting a form $F$
with Dixmier--Ohno invariants equal to $\Iv$ we will perform an extra step at
the end of our algorithm, as we will mention later.

In this setting we get $\jb_{2, 0}\ell^* (F)$, $\jb_{3, 0}\ell^* (F)$, \ldots
$\jb_{10, 0}\ell^* (F)$ as polynomial expressions in the $\Iv$. We now need to
construct $b_8$ and $b_4$ in the image
\begin{equation}\label{eq:rec_decomp}
  \ell^* (F)
  =
  (b_8, b_4, b_0)
  \in
  \Sym^8 (W^*) \oplus \Sym^4 (W^*) \oplus \Sym^0 (W^*) 
\end{equation}
with such joint invariants; note that $b_0$ is already given. We will rely on
the following generic strategy.  By Remark~\ref{rem:shioda}, we can compute the
Shioda invariants of the binary octic $b_8$ and we can then reconstruct $b_8$
by applying the results of~\cite{LR11}, at least as long as Shioda invariants
separate $\SL (W)$-orbits. We thus add this genericity assumption.\smallskip

\begin{flushleft}
  \ \hspace*{1cm}(ii) \parbox[t]{0.8\textwidth}{The roots of $b_8$ have
    multiplicities less than 4.}
\end{flushleft}\smallskip

\noindent
This assumption depends only on the invariants of $F$. More precisely, a
calculation shows that we have to be outside the locus defined by
\begin{equation}
  \begin{split}
    49\,{\jb^{2}_{3,0}}-81\,{\jb^{3}_{2,0}}=
    33\,\jb_{4,0}-25\,{\jb^{2}_{2,0}}=
    27\,\jb_{5,0}-20\,\jb_{3,0}\,\jb_{2,0}= 0\,, \\
    77\,\jb_{6,0}-50\,{\jb^{3}_{2,0}}=
    363\,\jb_{7,0}-125\,\jb_{3,0}\,{\jb^{2}_{2,0}}= 0\,,  \\
    \jb_{8,0}=
    \jb_{9,0}=
    \jb_{10,0} = 0\ \,
  \end{split}
\end{equation}

\begin{remark}
  We do not dig deeper into this matter here, but we plan to address the
  implementation of the full reconstruction problem in due time, especially to
  deal with the last assumption. For now one can treat the non-generic cases by
  using Gr\"obner basis calculations that make use of joint invariants with
  $b_4$.
\end{remark}

Now the results from~\cite{LR11} return an octic $b'_8$ with the given Shioda
invariants up to scalar multiplication in a weighted projective space, as there
is an action of $\SL (W)$ to play with. So there exists a scalar $\lambda$ such
that the invariants of $\lambda b'_8$ exactly equal the values $\jb_{d,0}
\ell^* (F)$. Such a constant $\lambda$ can be obtained from the evaluations at
$b'_8$ of the invariants $\jb_{d,0}$.\bigskip

Given $b_0 = I_9$ and $b_8 = \lambda\, b'_8$, it remains to determine $b_4$.
This determination simplifies under an additional generic assumption on the
$14$ joint invariants of degree~1 in the coefficients of $\bb_4$
\begin{equation}\label{eq:linforms}
  \jb_{2,1}, \jb_{3,1}, \jb_{4,1}, \jb'_{4,1}, \jb_{5,1}, \jb'_{5,1}, \jb_{6,1}, \jb'_{6,1},
  \jb_{7,1}, \jb'_{7,1}, \jb_{8,1}, \jb'_{8,1}, \jb_{9,1} \text{ and } \jb_{10,1}
\end{equation}
seen as linear forms in the coefficients of $\bb_4$, namely:

\begin{flushleft}
  \ \hspace*{1cm}(iii) \parbox[t]{0.8\textwidth}{The specialization of the linear
    forms~\eqref{eq:linforms} at $F$ generate a vector space of rank $5$.}
\end{flushleft}\smallskip

We can then find the coefficients of $b_4$ by solving a linear system.

\begin{remark}
  Experimentally, in the very rare cases where this assumption is not met, we
  complement these linear constraints with $5$ equations of degree $4$ that
  come from the relation
  \begin{equation}
    \rhob(\,(\ell^*)^{-1} ((b_8, b_4, b_0))\,) = u \cdot (v_2^2-v_1v_3)\,.
  \end{equation}
  Note that in addition to these 14+5=19 equations, we may make use of the
  joint invariants of degree 2, 3 and 4 in $\bb_4$ too. We have many equations
  of small degree in few unknowns. In such a situation, Gr\"obner basis
  calculations often yields the result.
\end{remark}

Now that we have all the factors of the decomposition~\eqref{eq:rec_decomp}, we
can determine a ternary quartic form $\tilde{F} = (\ell^*)^{-1} ((b_8, b_4,
b_0))$. By construction, this quartic has the requested Dixmier--Ohno
invariants seen as a point in the weighted projective space
$\P(1:2:3:3:4:4:5:5:6:6:7:7:9)$.

This suffices for our geometric purposes, but let us still illustrate how to
find a quartic form $F$ with Dixmier--Ohno invariants exactly equal $\Iv$, in
the simplest case where $I_3 \ne 0$. Compute $\lambda= I_3/\Ib_3(\tilde{F})$ and
let
\begin{equation}
  M =
  \left(\begin{matrix}
      \lambda & 0 & 0 \\
      0 & 1 & 0 \\
      0 & 0 & 1
    \end{matrix}\right).
\end{equation}
Since the weight of a degree $3 d$ Dixmier--Ohno invariant is $4d$ we see that
for $F = \frac{1}{\lambda} \cdot  \tilde{F}.M = \frac{1}{\lambda} \cdot
\tilde{F} (\lambda x_1, x_2, x_3)$ we get
\begin{equation}
  \Ib_3 (F)
  = \det(M)^4 \cdot \frac{\Ib_3(\tilde{F})}{\lambda^3}
  = \lambda \cdot \Ib_3(\tilde{F})
  = I_3.
\end{equation}

\subsection{Descent}
\label{sec:descent}

While the algorithm in the last section enables us to reconstruct generic
ternary quartic forms with given Dixmier--Ohno invariants, it has some
drawbacks that appear as soon as we try to work over a non-algebraically closed
base field $k$ of characteristic $0$. 

Under the genericity assumptions in the previous paragraph, the first
additional condition that we meet for reconstruction to be possible over the
base field is the existence of a binary octic $b'_8$ as constructed in that
paragraph with coefficients in $k$. That is, we have to assume that

\begin{flushleft}
  \hspace{1cm} (*) \parbox[t]{0.8\textwidth}{There exists a binary octic form
    $b'_8$ over $k$ whose tuple of Shioda invariants is equivalent to the
    Shioda invariants calculated from the given Dixmier--Ohno invariants.}
\end{flushleft}

If there exists such a $b'_8$ , then our main worry is to ensure that the
scalar $\lambda$ is $k$-rational. Since all invariants involved are rational,
an argument as in~\cite[\S~1.4]{LR11} is true under the following additional
generic assumption:

\begin{flushleft}
  \ \hspace*{1cm}(gen) \parbox[t]{0.8\textwidth}{The weights of the entries at
    which the Shioda invariants are non-zero forms a set whose elements
    generate the unit ideal in $\Z$.}
\end{flushleft}

The condition (gen) is generically satisfied. On the other hand, the condition
(*) is more subtle. Over number fields, it is in general impossible to avoid a
quadratic extension when applying the above approach directly (on the other
hand, such an obstruction never occurs over finite fields). Indeed, our
reconstruction always returns a ternary quartic $F$ such that the conic
associated to $\rhob (F)$ admits a rational point over its field of definition.
Hence, if we start from a quartic $F$ over $k$ without geometric automorphism
and such that $\rhob(F)$ has no $k$-rational point, such a quadratic extension
will necessarily occur.

So even when starting with a tuple of Dixmier--Ohno invariants that corresponds
to a quartic curve over $k$, the algorithms above may not return a model over
$k$ straight away; we usually need to pass to a quadratic extension. However,
by tinkering with Weil cocycles, we can often still obtain a model over $k$, as
is sketched in the following (artificial) example. We only scratch the surface
of the required theory; more information can be found in~\cite{LRRS} and in the
seminal work~\cite{wei56}.

\begin{example}\label{ex:desc}
  Let $\alpha = \sqrt{2}$ and consider the quartic curve $X$ defined by the
  polynomial $f$ given by
  \begin{small}
    \begin{equation}
      \begin{split}
        2 x_1^4 + (4 \alpha + 4) x_1^3 x_2 + x_1^3 x_3 + 18 x_1^2 x_2^2 + (2
        \alpha + 1) x_1^2 x_2 x_3 + x_1^2 x_3^2 + (8 \alpha + 4) x_1 x_2^3 \\
        + (2 \alpha + 2) x_1 x_2^2 x_3 + (\alpha + 1) x_1 x_2 x_3^2 + x_1
        x_3^3 + 5 x_2^4 + 2 x_2^3 x_3 + \alpha x_2^2 x_3^2 + \alpha x_2 x_3^3
        - x_3^4 .
      \end{split}
    \end{equation}
  \end{small}
  Then $X$ is isomorphic to its Galois conjugate $X^{\sigma}$ via the matrix
  \begin{equation}\label{eq:conj_iso}
    T =
    \begin{pmatrix}
      1 &  2 \alpha + 4 & 0 \\
      0 & -2 \alpha - 3 & 0 \\
      0 & 0 & 1
    \end{pmatrix}
  \end{equation}
  so that $f^{\sigma} = f . T$. Explicitly, the matrix $T$ can be found by
  similar methods as those developed in this paper, namely by normalizing both
  $f$ and $f^{\sigma}$ and thus reducing the problem to a question of $\SL
  (W)$-equivalence.

  The matrix $T$ satisfies the cocycle relation $T T^{\sigma} = 1$ in $\GL
  (V)$. This is not automatically the case, and one needs to consider the
  induced morphisms between spaces of differentials in general; \cf\ the
  analogous case for twists in~\cite{elg-twists}. By Hilbert's Theorem 90 we
  can find a matrix $T_0$ over $\Q (\alpha)$ such that $T = T_0^{-1}
  T_0^{\sigma}$, either by using a probabilistic method analogous to that
  employed in~\cite[\S~4.1]{LRRS} or writing out the entries of $T$ with
  respect to a $\Q$-basis of $\Q (\alpha)$ and solving the corresponding
  linear system. Regardless, a solution is given by
  \begin{equation}
    T_0 =
    \begin{pmatrix}
      1 & \alpha & 0 \\
      1 & 1 & 0 \\
      0 & 0 & 1
    \end{pmatrix} .
  \end{equation}
  We then have $(f . T_0^{-1})^{\sigma} = f^{\sigma} . T_0^{-\sigma} =
  f^{\sigma} . T^{-1} . T_0^{-1} = f . T_0^{-1}$, so that $f . T_0^{-1}$ is
  Galois invariant. And indeed
  \begin{equation}
    f . T_0^{-1} =
    x_1^4 + x_1^2 x_2 x_3 + x_1 x_2 x_3^2 + x_1 x_3^3 + x_2^4 - x_3^4 .
  \end{equation}
\end{example}

\begin{remark}
  In fact any smooth plane quartic with trivial geometric automorphism group
  descends to its field of moduli, as follows immediately from the criterion
  in~\cite{wei56}. So starting with a $k$-rational point of invariants $\Iv \in
  \P_{\DO} (k)$ of the projective space $\P_{\DO}$ corresponding to $\A_{\DO}
  (k)$, we can generically obtain a reconstruction over $k$ by first finding
  one over a quadratic extension by using our algorithms at the beginning of
  this section and then following the same method as in Example~\ref{ex:desc}.

  So if the geometric automorphism group of the curve corresponding to $\Iv$ is
  trivial, then we can reconstruct over the field of moduli $k$. When the
  characteristic of $k$ is either $0$ or large enough, then the results
  of~\cite{LRRS} show that we can also obtain a plane quartic over $k$
  corresponding to $\Iv$ if $\Iv$ corresponds to a plane quartic whose
  automorphism group has order strictly larger than $2$.

  It remains to discuss the case of smooth plane quartics with automorphism
  group $\Z / 2 \Z$. These admit a model over $K$ of the form
  \begin{equation}
    X : x_1^4 + f (x_2, x_3) x_1^2 + g (x_2, x_3) = 0 .
  \end{equation}
  It can be shown that if such a curve comes from a tuple $\Iv \in P_{\DO}
  (k)$, then the isomorphisms of $X$ with its conjugates induce a well-defined
  cocycle of automorphisms on the fixed line $x_1 = 0$, which is isomorphic to
  $\P^1$. This cocycle gives rise to a model $C$ of $\P^1$ over $k$, which is
  $k$-isomorphic to a conic over $k$. It is then possible to obtain a plane
  quartic over $k$ with invariants $\Iv$ if and only if the conic $C$ has a
  $k$-rational point. We hope to treat this subject more fully in future work;
  it is a generalization of the descent from $\C$ to its subfield $\R$
  considered in~\cite{artqui} and allows for a full classification of curves
  that do not descend to their field of moduli.
\end{remark}

\subsection{Implementation}
\label{sec:implementation}

We conclude this article by briefly discussing the implementation and
efficiency of our implementation. To do this, we give a single example of our
methods. We follow the procedure described in Section~\ref{sec:descent} for the
Dixmier--Ohno invariants
\begin{footnotesize}
  \begin{multline}\label{eq:dorat}
    \Iv = \left( %
      0,\, %
      0,\, %
      0,\, %
      0,\, %
      -{\frac { 7\cdot 19 }{ 2 ^{14}\cdot 3 ^{8}\cdot 5 ^{2}}},\, %
      0,\, %
      -{\frac { 11\cdot 19 }{ 2 ^{17}\cdot 3 ^{10}\cdot 5 ^{2}}},\, %
      0,\, %
      {\frac { 7\cdot 19 ^{2}}{ 2 ^{20}\cdot 3^{11}\cdot 5 ^{3}}},\, %
      {\frac { 19 ^{2}}{ 2 ^{20}\cdot 3^{11}\cdot 5 ^{3}}},\, \right.\\\left. %
      -{\frac { 19 ^{2}\cdot 31 }{ 2 ^{24}\cdot 3 ^{13}\cdot 5 ^{5}}},\, %
      -{\frac { 17\cdot 19 ^{2}}{ 2 ^{21}\cdot 3 ^{12}\cdot 5 ^{5}}},\, %
      -{\frac { 19 ^{2}\cdot 6553 }{ 2 ^{39}\cdot 3 ^{6}\cdot 5 ^{5}\cdot 11 }} %
    \right)\,.
  \end{multline}
\end{footnotesize}\noindent
We obtained this tuple by looking for a small rational point $(I_3$, $I_6$,
$\ldots$, $I_{27})$ in the space $\A^{\DO} (\Q) \cap \im \phi$ with $I_{12}
\neq 0$, $I_{27}\neq 0$. We therefore did not \textit{a priori} know a curve
with these invariants.

While these invariants are simple enough to write down, the intermediate
expressions calculated over $\Q$ in the algorithm have very large
coefficients. We therefore switch to the finite field $\F_{29}$ and perform
our computation there first.
We have $\Iv \bmod 29$ = $(0, 0, 0, 0, 17, 0, 17, 0, 2, 21, 4, 4, 9)$.
Applying Theorem~\ref{thm:main} by using our interpolation polynomial gives
us the joint invariants
\begin{equation}
  S (\Iv)  = (j_{2,0}, j_{3,0}, \ldots, j_{10,0}) = ( 4, 19, 4, 20, 19, 9, 24, 24, 8 ) .
\end{equation}
A corresponding binary octic $b_8$ is
\begin{equation}
  15\, y_1^8 + 9\, y_1^7 y_2 + 6\, y_1^6 y_2^2 + 19\, y_1^5 y_2^3 + 28\,
  y_1^4 y_2^4 + 16\, y_1^3 y_2^5 + 4\, y_1^2 y_2^6 + 25\, y_1 y_2^7 + 20\,
  y_2^8
\end{equation}
for which
\begin{equation}
  S (b_8) = (  7, 9, 5, 12, 18, 8, 23, 18, 11 ) .
\end{equation}
We have $S (b_8) = (\lambda^2 j_{2,0}, \ldots, \lambda^{10} j_{10,0})$ with
$\lambda = 26$. 

We have, $b_0 = I_9 = 0$ and we replace $b_8$ by $\lambda b_8 = 26 b_8$. It
remains to determine the coefficients of $b_4 = a_{4,0} y_1^4 + a_{3,1} y_1^3
y_2 + a_{2,2} y_1^2 y_2^2 + a_{1,3} y_1 y_2^3 + a_{0,4} y_2^4$. Using the
joint invariants, scaled by $\lambda$, leads to the system of equations
\begin{equation}
  \begin{split}
    13\, a_{4,0} + 21\, a_{3,1} + 4\, a_{2,2} + 17\, a_{1,3} + 25\, a_{0,4} + 9 = 0, \\
    9\, a_{4,0} + 8\, a_{3,1} + 10\, a_{2,2} + 19\, a_{1,3} + 25\, a_{0,4} + 18 = 0, \\
    9\, a_{4,0} + 22\, a_{3,1} + 7\, a_{2,2} + 18\, a_{1,3} + 22\, a_{0,4} + 17 = 0, \\
    22\, a_{4,0} + 15\, a_{3,1} + 3\, a_{2,2} + a_{1,3} + 19\, a_{0,4} + 18 = 0, \\
    23\, a_{4,0} + 10\, a_{3,1} + 19\, a_{2,2} + 28\, a_{1,3} + 22\, a_{0,4} + 26 = 0, \\
    13\, a_{4,0} + 25\, a_{3,1} + 8\, a_{2,2} + 12\, a_{1,3} + 3\, a_{0,4} + 16 = 0 .
  \end{split}
\end{equation}
This system admits the unique solution
\begin{equation}
  b_4 = 9\,y_1^4 + 8\,y_1^3\,y_2 + 19\,y_1^2\,y_2^2 + 17\,y_1\,y_2^3 + 23\,y_2^4
\end{equation}
and we get that the reconstruction $F = (\ell^*)^{-1} ((b_8, b_4, b_0))$
defines a curve given by
\begin{multline}
  24\, x_1^4 + 13\, x_1^3 x_2 + x_1^3 x_3 + 21\, x_1^2 x_2^2 + 22\, x_1^2 x_2
  x_3 + 28\, x_1^2 x_3^2 + 7\, x_1 x_2^3 + 23\, x_1 x_2^2 x_3 \\
  + 27\, x_1 x_2 x_3^2 + 10\, x_1 x_3^3 + 4\, x_2^4 + 24\, x_2^3 x_3 + 2\,
  x_2^2 x_3^2 + 20\, x_2 x_3^3 + 3\, x_3^4
\end{multline}
Over $\Q$, our algorithms returns on input~(\ref{eq:dorat}) a quartic with
coefficients that have around $100$ decimal digits. This inefficiency seems
to stem from the intermediate reconstruction from Shioda invariants. But by
using reduction theory (see the algorithms in~\cite{Stoll2011} and their
implementation in \Magma), we can obtain the much nicer quartic
\begin{small}
  \begin{equation}
    \begin{split}
      4\, x_1^4 - 12\, x_1^3 x_2 - 62\, x_1^3 x_3 - 108\, x_1^2 x_2^2 + 144\,
      x_1^2 x_2 x_3 + 12\, x_1^2 x_3^2 + 20\, x_1 x_2^3 \\
      - 90\, x_1 x_2^2 x_3 - 210\, x_1 x_2 x_3^2 + 125\, x_1 x_3^3 - 30\,
      x_2^4 - 160\, x_2^3 x_3 + 135\, x_2 x_3^3 + 180\, x_3^4
    \end{split}
  \end{equation}
\end{small}
Over a finite field the above coefficient explosion that we mentioned above is
not an issue, and our algorithms are correspondingly fast there. Their
performances is recorded in Table~\ref{tab:benchmarks} (average timings on an
\textsc{Intel} i7 -- 2.80 GHz processor with \Magma\ 2.21-10). Over finite
fields, it is not known to us when the Dixmier--Ohno invariants are generators
of the ring of invariants but we can at least verify \emph{a posteriori}
whether the constructed quartic has the correct Dixmier--Ohno invariants.

\begin{table}[h]
  \renewcommand*{\arraystretch}{1.2}
  \begin{tabular}{l|l|l|l|l|l}
    & $\F_p$ & $\F_{p^{25}}$ & $\F_{p^{50}}$ & $\F_{p^{75}}$ & $\F_{p^{100}}$ \\
    \hline
    \hline
    %
    $p = 41$ & 0.05 s & 0.5 s & 0.8 s & 1.2 s & 2.8 s \\
    \hline
    \multicolumn{6}{c}{}\\[0.2cm]
    & $\F_p$ & $\F_{p^{2}}$ & $\F_{p^{3}}$ & $\F_{p^{4}}$ & $\F_{p^{10}}$ \\
    \hline
    \hline
    $p \approx 10^{50}$  & 0.2 s & 0.6 s & 0.9 s & 1.2 s & 3.1 s \\
    $p \approx 10^{100}$ & 0.2 s & 0.8 s & 1.1 s & 1.6 s & 5.9 s \\
    $p \approx 10^{150}$ & 0.3 s & 1.0 s & 1.6 s & 2.3 s & 9.8 s \\
    $p \approx 10^{200}$ & 0.4 s & 1.5 s & 2.6 s & 3.7 s & 18.4 s \\
    \hline
  \end{tabular}\medskip
  \caption{Reconstruction timings}
  \label{tab:benchmarks}
\end{table}

\end{document}